\documentclass[12pt]{article}

\usepackage{amsfonts}
\usepackage{amsmath}
\usepackage{amssymb}
\usepackage{amsthm}
\usepackage{xcolor}
\usepackage{color}

\usepackage{floatrow}
\usepackage{makeidx}
\usepackage{graphicx}
\usepackage{float}
\usepackage{mathtools}
\usepackage[margin=2cm]{geometry}
\usepackage{subcaption}

\usepackage{tikz-cd}
\usetikzlibrary{graphs,decorations.pathmorphing,decorations.markings}

\usepackage{hyperref}  

\newtheorem{theorem}{Theorem}[section]
\newtheorem{prop}[theorem]{Proposition}
\newtheorem{coro}[theorem]{Corollary}
\newtheorem{lemma}[theorem]{Lemma}
\newtheorem{example}{Example}[section]
\newtheorem{defin}{Definition}[section]
\newtheorem{remark}{Remark}[section]

\newcommand{\pg}{\operatorname{PG}}


\pdfminorversion=7 

\title{
Multiscale analysis via pseudo-reversing and applications \\ to manifold-valued sequences
}

\author{Wael Mattar and Nir Sharon}
\date{}

\begin{document}
\maketitle

\noindent\makebox[\linewidth]{\rule{\textwidth}{1pt}}

\textbf{Abstract.} Modeling data using manifold values is a powerful concept with numerous advantages, particularly in addressing nonlinear phenomena. This approach captures the intrinsic geometric structure of the data, leading to more accurate descriptors and more efficient computational processes. However, even fundamental tasks like compression and data enhancement present meaningful challenges in the manifold setting. This paper introduces a multiscale transform that aims to represent manifold-valued sequences at different scales, enabling novel data processing tools for various applications. Similar to traditional methods, our construction is based on a refinement operator that acts as an upsampling operator and a corresponding downsampling operator. Inspired by Wiener’s lemma, we term the latter as the \emph{reverse} of the former. It turns out that some upsampling operators, for example, least-squares-based refinement, do not have a practical reverse. Therefore, we introduce the notion of \emph{pseudo-reversing} and explore its analytical properties and asymptotic behavior. We derive analytical properties of the induced multiscale transform and conclude the paper with numerical illustrations showcasing different aspects of the pseudo-reversing and two data processing applications involving manifolds.

\noindent\makebox[\linewidth]{\rule{\textwidth}{1pt}}

\textbf{Keywords:} multiscale transform; manifold-valued sequences; manifold data enhancement; compression of Lie group sequences; Wiener's lemma; pseudo-reversing.

\noindent\makebox[\linewidth]{\rule{\textwidth}{1pt}}

\textbf{Mathematics subject classification:}  42C40; 65G99; 43A99; 65D15.

\section{Introduction}

Manifolds have become ubiquitous in modeling nonlinearities throughout many fields, ranging from science to engineering. In addition, the unceasing increase of sophisticated, modern data sets has brought many challenges in the processing of manifold data, including essential tasks like principle component analysis~\cite{mardia2022principal}, interpolation~\cite{samir2019c1}, optimization~\cite{boumal2023introduction}, and even the adaptation of proper neural networks to manifold values, e.g.,~\cite{chakraborty2020manifoldnet, gao2022curvature}. Here, we focus on multiscale transforms as a critical component in many applications over manifold data, see~\cite{cotronei2019level, mattar2023pyramid, rahman2005multiscale}. In particular, we aim to construct and analyze a new multiscale transform for manifold data.

One foundational statement in the Banach algebra theory and harmonic analysis is Wiener's Lemma, e.g.,~\cite[Chapter 5]{grochenig2010wiener}. The lemma deals with the invertibility and spectrum of operators. In particular, in the Banach space of periodic functions with absolutely convergent Fourier series, the lemma suggests that if a function does not vanish, then there exists a pointwise multiplicative inverse with absolutely convergent Fourier series. These functions, having an inverse in this sense, are essential for multiscale construction, and we term them \emph{reversible}. Their inverse also plays a key role, so the need for reversing rises. Unfortunately, in some cases, the reversibility is poorly possible numerically or even impossible by definition. In such cases, we aspire to suggest an alternative.   

In this paper, we introduce the notion of pseudo-reversing and describe it in detail. As a natural implication of the terminology, and as one may mathematically expect, the pseudo-reverse of a reversible function coincides with its unique inverse. Conversely, applying this method to a non-reversible function produces a family of functions, depending on a continuous regularization parameter, with an absolutely convergent Fourier series. Each function approximates the corresponding inverse according to the selected regularization. Then, we study the algebraic properties of the method and introduce a condition number to determine ``how reversible'' functions are. 

Once pseudo-reversing is established, we show its application for analyzing real-valued sequences in a multiscale fashion. In the context of multiscale transforms, the importance of Wiener's Lemma is evoked when associating a refinement operator with a sequence in $\ell_1(\mathbb{Z})$, that is, the space of absolutely convergent real-valued bi-infinite sequences. Specifically, given a refinement that meets the condition number of reversibility, the lemma guarantees the existence of a corresponding decimation operator. Moreover, a direct result from~\cite{strohmer2002four} implies that the calculation of the decimation involves an infinitely supported sequence which in turn can be truncated while maintaining accuracy~\cite{mattar2023pyramid}. The two operators, refinement and decimation, define a pyramid multiscale transform and its inverse transform. 

In this study, we further generalize pyramid multiscale transforms based on a broader class of refinement operators that do not admit matching decimation operators in an executable form. In particular, we use pseudo-reversing to define the pseudo-reverse of a refinement operator. Epitomai of non-reversible operators appear in the least squares refinements introduced in~\cite{dyn2015univariate}. Nevertheless, even with reversible operators, if their reverse conditioning number is poor, we show that it is preferred to establish their associated pyramid with a pseudo-reversing operator.

As one may expect, since our generalization is based on pseudo-reversing, it comes with a cost. We present the analytical properties of the transform and show that the cost emerges in the synthesis algorithm, that is, the inverse transform, and carries undesired inaccuracies. However, we show that under mild conditions, the error is tolerable.

With the new linear multiscale transforms, we show how to adapt them to Riemannian manifold data. First, we demonstrate how the manifold-valued transform enjoys analog results to the linear case. Specifically, we observe how the magnitude of the detail coefficients in the new multiscale representation decays with the scale. Moreover, we estimate the synthesis error from pseudo-reversing analytically for the specific manifolds with a non-negative sectional curvature.

We conclude the paper with numerical illustrations of pseudo-reversing. First, we show how to use it for constructing a decimation operator for a non-reversible subdivision scheme and the resulting multiscale. Then, we move to manifold-valued data and introduce the applications of contrast enhancement and data compression via our transform. The application is made by systematically manipulating the detail coefficients of manifold-valued sequences. Indeed, the numerical results confirm the theoretical findings. All figures and examples were generated using a Python code package that complements the paper and is available online for reproducibility.

The paper is organized as follows. Section~\ref{sec:pseudo-reversing} lays out the notation and definitions regarding pseudo-reversing and related terms. Section~\ref{sec:linearPyramidTrans} introduces the pyramid transform in its linear settings, where in Section~\ref{sec:multiscalingManifoldData} we present the multiscale transform for manifold values. Finally, in Section~\ref{sec:numerical_examples}, we describe the numerical examples.

\section{Pseudo-reversing and polynomials}\label{sec:pseudo-reversing}

In this section, we briefly revisit Wiener's Lemma and present its classical formulation. In the Banach space of functions with absolutely convergent Fourier series, the lemma proposes a sufficient condition for the existence of a pointwise multiplicative inverse within the space. We term the functions enjoying an inverse in this sense as reversible. Next, we introduce the notion of pseudo-reversing as a method to circumvent the potential non-reversibility of polynomials and describe it in detail. Finally, we present a condition number to measure the reversibility of functions and study how pseudo-reversing improves the reversibility of polynomials.

We follow similar notations presented in~\cite[Chapter 5]{grochenig2010wiener}. Let $\mathbb{T}=\{z\in\mathbb{C}: |z|=1\}$ be the unit circle of the complex plane, denote by $\mathcal{A}(\mathbb{T})$ the Banach space consisting of all periodic functions $f(t)=\sum_{k\in\mathbb{Z}}a_ke^{2\pi ikt}$ with coefficients $\boldsymbol{a}\in\ell_1(\mathbb{Z})$. We endow $\mathcal{A}(\mathbb{T})$ with the norm
\begin{equation*}
    \|f\|_{\mathcal{A}} = \|\boldsymbol{a}\|_1 = \sum_{k\in\mathbb{Z}}|a_k|.
\end{equation*}
The space $\mathcal{A}(\mathbb{T})$ becomes a Banach algebra under pointwise multiplication. In particular, $\|fg\|_\mathcal{A}\leq\|f\|_\mathcal{A}\|g\|_\mathcal{A}$ for any $f,g\in\mathcal{A}(\mathbb{T})$. Given a function $f\in\mathcal{A}(\mathbb{T})$, Wiener's Lemma proposes a sufficient condition for the existence of the inverse $1/f$ in the space $\mathcal{A}(\mathbb{T})$. The classical formulation of the lemma is as follows.

\begin{lemma}~\label{lem:Wiener's_lemma}
    \emph{(Wiener's Lemma)}. If $f\in\mathcal{A}(\mathbb{T})$ and $f(t)\neq 0$ for all $t\in\mathbb{T}$, then also $1/f\in\mathcal{A}(\mathbb{T})$. That is, $1/f(t) = \sum_{k\in\mathbb{Z}}b_ke^{2\pi ikt}$ for some $\boldsymbol{b}\in\ell_1(\mathbb{Z})$.
\end{lemma}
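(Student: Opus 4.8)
The plan is to prove the statement via Gelfand's theory of commutative Banach algebras, which reduces the invertibility of $f$ in $\mathcal{A}(\mathbb{T})$ to the nonvanishing hypothesis $f(t)\neq 0$. First I would record that $\mathcal{A}(\mathbb{T})$ is a commutative Banach algebra with unit, namely the constant function $1$, the submultiplicativity $\|fg\|_\mathcal{A}\leq\|f\|_\mathcal{A}\|g\|_\mathcal{A}$ having already been noted above. The central tool is the standard Banach-algebra fact that an element $x$ of a commutative unital Banach algebra $B$ is invertible if and only if $\phi(x)\neq 0$ for every character (nonzero multiplicative linear functional) $\phi$ on $B$; equivalently, the noninvertible elements are exactly those contained in some maximal ideal, and the maximal ideals are in bijection with the characters.

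The crux of the argument is then to identify the set of characters of $\mathcal{A}(\mathbb{T})$ — its Gelfand spectrum — with $\mathbb{T}$ itself, so that the characters are precisely the point evaluations $f\mapsto f(t)$, $t\in\mathbb{T}$. Here I would use that $\mathcal{A}(\mathbb{T})$ is generated by the single function $z(t)=e^{2\pi i t}$ together with its inverse $z^{-1}$. Since every character $\phi$ is automatically contractive, and $\|z\|_\mathcal{A}=\|z^{-1}\|_\mathcal{A}=1$, we obtain $|\phi(z)|\leq 1$ and $|\phi(z^{-1})|=|\phi(z)|^{-1}\leq 1$, forcing $|\phi(z)|=1$. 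Hence $\phi(z)=e^{2\pi i t_0}$ for a unique $t_0\in[0,1)$, and multiplicativity together with continuity of $\phi$ on the absolutely convergent series gives $\phi(f)=\sum_{k\in\mathbb{Z}}a_k\phi(z)^k=f(t_0)$. Thus every character is a point evaluation, while conversely each point evaluation is plainly a character.

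With the spectrum identified, the conclusion is immediate: the hypothesis $f(t)\neq 0$ for all $t\in\mathbb{T}$ says exactly that $\phi(f)\neq 0$ for every character $\phi$, so $f$ is invertible in the algebra. The inverse $g\in\mathcal{A}(\mathbb{T})$ then satisfies $fg=1$ pointwise, whence $g=1/f$, and therefore $1/f$ possesses an absolutely convergent Fourier series, i.e. its coefficient sequence $\boldsymbol{b}$ lies in $\ell_1(\mathbb{Z})$, as claimed.

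I expect the main obstacle to be the spectrum identification in the second step — in particular, verifying that every character must be a point evaluation rather than some exotic multiplicative functional. This rests on the contractivity of characters and on the precise norm computation $\|z^{\pm 1}\|_\mathcal{A}=1$, which pins $\phi(z)$ to the unit circle. As an alternative that avoids Gelfand theory, one could instead argue by localization: by compactness $|f|\geq\delta>0$, so near each $t_0$ one writes $f=f(t_0)(1+h)$ with a localized remainder of small $\mathcal{A}$-norm, inverts locally via the geometric series $\sum_{k\geq 0}(-h)^k$, and patches the local inverses together with a smooth partition of unity on $\mathbb{T}$; there the difficulty migrates to controlling the $\mathcal{A}$-norm of products under localization.
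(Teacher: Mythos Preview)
Your proof via Gelfand theory is correct and complete. Note, however, that the paper does not actually supply its own proof of this lemma: it merely states the result and points to three references --- Wiener's original localization/partition-of-unity argument, the abstract Gel'fand-theory proof, and Newman's elementary proof. Your main argument is precisely the Gel'fand-theory route the paper alludes to, and the alternative you sketch at the end (localization plus geometric series plus partition of unity) is Wiener's original approach, also mentioned by the paper. So there is nothing to compare against beyond noting that your write-up fills in what the paper leaves to citations.
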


Wiener’s original proof~\cite{wiener1932tauberian} uses a localization property and a partition of unity argument. An abstract proof of Lemma~\ref{lem:Wiener's_lemma} is given by Gel’fand theory, see e.g.~\cite{kaniuth2009course}. A simple and elementary proof can be found in~\cite{newman1975simple}.

\begin{defin}~\label{defin:reversible_function}
    A function $f\in\mathcal{A}(\mathbb{T})$ is called \emph{reversible} if $1/f\in\mathcal{A}(\mathbb{T})$. Moreover, $1/f$ is termed the \emph{reverse} of $f$.
\end{defin}

Lemma~\ref{lem:Wiener's_lemma} guarantees that functions that do not vanish on the unit circle are reversible. One primary class of functions that is advantageous to reverse is polynomials. Indeed, in various applications, many approximating operators are uniquely characterized by polynomials, e.g., refinement operators~\cite{dyn1992subdivision}. We hence focus on the reversibility of polynomials in $\mathcal{A}(\mathbb{T})$.

Let $p(z)=a_nz^n+ a_{n-1}z^{n-1}+\dots + a_1z + a_0$ be a polynomial of degree $n\in\mathbb{N}$ with complex-valued coefficients. Without loss of generality, from now on we assume that the coefficients of $p$ sum to 1, that is $p(1)=\sum_{k=0}^na_k=1$. This requirement coincides with the standard notion of center of mass when the coefficients are treated as weights, and is frequent in many tasks in approximation theory, e.g., in interpolation techniques and partition of unity, see~\cite{cavoretto2021adaptive} for instance. Particularly, it is compatible with condition~\eqref{eqn:partition_of_unity}, as we shall see next in the context of refinement operators.

Denote by $\Lambda$ the set of all zeros of $p$, including multiplicities. That is, if $r$ is a root with multiplicity $m\in\mathbb{N}$ then $r$ appears $m$ many times in $\Lambda$. By the complete factorization theorem, we can write
\begin{equation*}~\label{eqn:factorization_of_p}
p(z) = C(p)\prod_{r\in\Lambda}(z-r)
\end{equation*}
where $C(p)$ is the leading coefficient of $p$. This algebraic expression makes a flexible framework to manipulate the zeros of $p$. The following definition introduces the pseudo-reverse of $p$.

\begin{defin}
    For some $\xi>0$, the pseudo-reverse of a polynomial $p$ is defined as
    \begin{equation}~\label{eqn:pseudo_reverse}
        p_\xi^\dagger(z)=\bigg(C(p_\xi^\dagger)\prod_{r\in\Lambda\setminus\mathbb{T}}(z-r) \prod_{r\in\Lambda\cap\mathbb{T}}(z-(1 + \xi) r)\bigg)^{-1}
    \end{equation}
    where $C(p_\xi^\dagger)$ is a constant depending on $\xi$ determined by $p_\xi^\dagger(1)=1$.
\end{defin}

Note that the pseudo-reverse $p_\xi^\dagger$ of a polynomial $p$ is uniquely determined by the constant $\xi$, and is not a polynomial unless $p(z)\equiv1$. Moreover, one can easily see that if a polynomial $p$ does not vanish on the unit circle $\mathbb{T}$, then its pseudo-reverse coincides with its reverse. That is, $p_\xi^{\dagger}=1/p$ for any $\xi$. The requirement that $p_\xi^\dagger(1)=1$ is proposed to ensure the equality $p_\xi^\dagger(z)p(z)=1$ at least for $z=1$. We denote the inverted term on the right-hand side of~\eqref{eqn:pseudo_reverse}, that is, the term within the parentheses, by $p_\xi^{-\dagger}$. Geometrically speaking, $p_\xi^{-\dagger}$ approximates $p$ by displacing its zeros $\Lambda\cap\mathbb{T}$ along the rays connecting the origin with the zeros, outwards, with a displacement parameter $\xi$. Hence, by Wiener's Lemma~\ref{lem:Wiener's_lemma} the polynomial $p_\xi^{-\dagger}$ is reversible and $p_\xi^\dagger\in\mathcal{A}(\mathbb{T})$.

The perturbations between the polynomial coefficients of $p$ and $p_\xi^{-\dagger}$ can be expressed in a closed form with respect to $\xi$, but the analytical evaluations are not essential to our work. However, it is worth mentioning that the perturbations are monotonic with respect to the power of the argument $z$. In particular, the perturbation between the coefficient of $z^j$ in $p$ and the coefficient of $z^j$ in $p_\xi^{-\dagger}$ increases when $j$ decreases. Figure~\ref{diag:pseudo_reversing} illustrates the computations behind pseudo-reversing~\eqref{eqn:pseudo_reverse}.

\begin{figure}[H]
\centering
 \begin{tikzcd}[sep=large]
    p(z) \arrow[dr, "\substack{\text{approximation}}", swap, sloped]{} \arrow[rr, "\text{pseudo-reversing}"] & & p_\xi^\dagger(z) \\
      & p_\xi^{-\dagger}(z) \arrow[ur, "\substack{\text{reversing}}", swap, sloped]
 \end{tikzcd}
 \caption{The computations behind pseudo-reversing polynomials~\eqref{eqn:pseudo_reverse}. To pseudo-reverse a polynomial $p(z)$, we first approximate it with $p_\xi^{-\dagger}(z)$ by pushing its zeros with modulus 1, away from the unit circle, and then reversing to get $p_\xi^{\dagger}(z)$.}
 \label{diag:pseudo_reversing}
\end{figure}

Let us proceed with an analytical example.


\begin{example}\label{example_2}
    Consider the polynomial $p(z)=(z^2+z+1)/3$ which vanishes for $z=-1/2\pm i\sqrt{3}/2\in\mathbb{T}$. The pseudo reverse $p_\xi^\dagger$ of $p$ calculated via~\eqref{eqn:pseudo_reverse} is then
    \begin{equation*}
        p_\xi^\dagger(z)=\frac{3+3\xi+\xi^2}{z^2 + z(1+\xi)+ (1+\xi)^2}, \quad \xi>0.
    \end{equation*}
    Notice that the function $p_\xi^\dagger(z)$ is not a polynomial. Namely, it cannot be realized as $\sum_{k\in\mathbb{Z}}a_kz^k$ with a finite number of non-zero coefficients. Section~\ref{sec:numerical_examples}, and in particular Figure~\ref{fig:decimation_coefficients}, revisit this example and illustrate different numerical aspects of it.
\end{example}

We now present some properties of pseudo-reversing through a series of useful propositions.

\begin{prop}
    If the polynomial $p$ has real coefficients, then so does the polynomial $p_\xi^{-\dagger}$.
\end{prop}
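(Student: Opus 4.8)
The plan is to exploit the elementary fact that a monic polynomial has real coefficients precisely when its multiset of zeros is invariant under complex conjugation. Since $p$ has real coefficients, its zero multiset $\Lambda$ is conjugate-closed: whenever $r\in\Lambda$ occurs with multiplicity $m$, the conjugate $\bar r$ also lies in $\Lambda$ with the same multiplicity $m$. The first step is to observe that this conjugation symmetry is respected by the partition of $\Lambda$ appearing in the definition of $p_\xi^{-\dagger}$. Indeed, since $|\bar r|=|r|$, a zero lies on $\mathbb{T}$ if and only if its conjugate does; hence both $\Lambda\cap\mathbb{T}$ and $\Lambda\setminus\mathbb{T}$ are themselves conjugate-closed multisets.

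Next I would track how the displacement acts on these symmetric sets. The off-circle factor $\prod_{r\in\Lambda\setminus\mathbb{T}}(z-r)$ is left untouched, and because $\Lambda\setminus\mathbb{T}$ is conjugate-closed, this product is a monic real-coefficient polynomial. For the on-circle factor, the map $r\mapsto(1+\xi)r$ commutes with conjugation since $\xi>0$ is real, that is $\overline{(1+\xi)r}=(1+\xi)\bar r$. Therefore the displaced multiset $\{(1+\xi)r:r\in\Lambda\cap\mathbb{T}\}$ is again conjugate-closed, so $\prod_{r\in\Lambda\cap\mathbb{T}}\bigl(z-(1+\xi)r\bigr)$ is likewise a monic real-coefficient polynomial. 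I would verify this either by reindexing $r\mapsto\bar r$ inside each product and checking it returns the same product, or equivalently by pairing each genuinely complex on-circle zero with its conjugate to form a real quadratic factor $z^2-2\operatorname{Re}\bigl((1+\xi)r\bigr)z+(1+\xi)^2$ (using $|r|=1$), while the self-conjugate case $r=-1$ gives the real linear factor $z+(1+\xi)$.

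It remains to settle the leading constant $C(p_\xi^\dagger)$. Writing $Q(z)$ for the monic product of the two factors above, which we have just shown has real coefficients, the normalization $p_\xi^{-\dagger}(1)=1$ forces $C(p_\xi^\dagger)=1/Q(1)$. Here $Q(1)\neq0$, since $z=1$ is neither an off-circle zero (as $1\in\mathbb{T}$) nor a displaced on-circle zero (as $|(1+\xi)r|=1+\xi>1$), and $Q(1)$ is real because $Q$ has real coefficients; hence $C(p_\xi^\dagger)$ is real. Assembling, $p_\xi^{-\dagger}=C(p_\xi^\dagger)\,Q(z)$ is a real scalar times a real-coefficient polynomial, and therefore has real coefficients.

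I do not expect a genuine obstacle here; the argument is essentially bookkeeping. The one point demanding care is the simultaneous handling of multiplicities and of the partition: I must ensure that conjugate zeros retain equal multiplicities after the split into $\Lambda\cap\mathbb{T}$ and $\Lambda\setminus\mathbb{T}$ and after displacement, so that the reindexing $r\mapsto\bar r$ is genuinely a bijection of each multiset onto itself. The reality of $C(p_\xi^\dagger)$ is the only place where the normalization convention enters, and it follows immediately once the two product polynomials are known to be real.
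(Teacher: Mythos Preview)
Your argument is correct and follows the same approach as the paper: both rely on the fact that the zero multiset of a real-coefficient polynomial is closed under conjugation and that the radial displacement $r\mapsto(1+\xi)r$ preserves this closure. Your version is considerably more detailed---in particular you explicitly verify that the partition into $\Lambda\cap\mathbb{T}$ and $\Lambda\setminus\mathbb{T}$ respects conjugacy and that the normalizing constant $C(p_\xi^\dagger)$ is real---whereas the paper's proof dispatches the matter in two sentences.
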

\begin{proof}
    The assumption implies that if $r\in\Lambda$ is a zero of $p$, then so is the conjugate $\Bar{r}\in\Lambda$. Moreover, the nature of pseudo-reversing~\eqref{eqn:pseudo_reverse} preserves the conjugacy property of the zeros of $p_\xi^{-\dagger}$.
\end{proof}

\begin{prop}~\label{prop:pseudo_approximation}
    For any polynomial $p$, the product $p_\xi^{\dagger}p$ converges in norm to $1$ as $\xi$ approaches $0^+$. Namely,
    \begin{equation*}
        \lim_{\xi\rightarrow 0^+}\|p_\xi^{\dagger}p - 1\|_{\mathcal{A}}=0.
    \end{equation*}
\end{prop}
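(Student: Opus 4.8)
The plan is to first reduce the product to its essential part by exploiting the factorizations of $p$ and $p_\xi^{-\dagger}$. Writing $p_\xi^\dagger p = p/p_\xi^{-\dagger}$ and recalling that $p_\xi^{-\dagger}$ retains exactly the off-circle factors $\prod_{r\in\Lambda\setminus\mathbb T}(z-r)$ of $p$, these cancel and leave
\begin{equation*}
    p_\xi^\dagger p = c_\xi\prod_{r\in\Lambda\cap\mathbb T}\frac{z-r}{z-(1+\xi)r}, \qquad c_\xi=\frac{C(p)}{C(p_\xi^\dagger)}.
\end{equation*}
Each surviving pole lies at modulus $1+\xi>1$, so the right-hand side is analytic on a neighborhood of the closed unit disk and hence genuinely lies in $\mathcal A(\mathbb T)$; thus the reduction is legitimate. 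The normalizations $p(1)=p_\xi^\dagger(1)=1$ moreover pin the value at $z=1$, giving $(p_\xi^\dagger p)(1)=1$ for every $\xi$.

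With $g_{\xi,r}(z)=\frac{z-r}{z-(1+\xi)r}$, I would first dispatch the scalar $c_\xi$. Comparing the two normalization identities yields the closed form $c_\xi=\prod_{r\in\Lambda\cap\mathbb T}\frac{1-(1+\xi)r}{1-r}$, a finite product of factors each tending to $1$, so $c_\xi\to1$ as $\xi\to0^+$ and $|c_\xi-1|$ contributes negligibly. It then remains to control $\prod_{r}g_{\xi,r}-1$ in $\mathcal A$, for which the natural device is the telescoping identity $\prod_{j=1}^m g_j-1=\sum_{k=1}^m\big(\prod_{j<k}g_j\big)(g_k-1)$ combined with the Banach-algebra submultiplicativity $\|fg\|_{\mathcal A}\le\|f\|_{\mathcal A}\|g\|_{\mathcal A}$. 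This would reduce the whole statement to a uniform-in-$\xi$ bound on the single-factor deviations $\|g_{\xi,r}-1\|_{\mathcal A}$ together with the partial-product norms.

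The single-factor estimate is where the real difficulty concentrates, and it is the step I expect to be the main obstacle. A short computation gives the clean form
\begin{equation*}
    g_{\xi,r}(z)-1=\frac{\xi r}{z-(1+\xi)r}=-\frac{\xi}{1+\xi}\sum_{k\ge0}\Big(\frac{z}{(1+\xi)r}\Big)^{k},
\end{equation*}
so although the numerator is of size $O(\xi)$, the pole approaches the unit circle as $\xi\to0^+$ and the $\ell_1$ mass of the geometric tail grows like $1/\xi$; the two effects are of exactly the same order, so the crude per-factor bound stalls at an $O(1)$ quantity and mere submultiplicativity cannot close the argument. The decisive and most delicate point is therefore to analyze the $\ell_1$ behavior of the coefficients of $p_\xi^\dagger p-1$ directly, extracting whatever cancellation the full normalized product provides; since that cancellation is concentrated near the on-circle zeros $\Lambda\cap\mathbb T$ — precisely where $p$ vanishes and the product is pulled away from $1$ — a sharper, localized estimate is required rather than the submultiplicative one. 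Turning the pointwise decay of these coefficients into summable decay uniformly in $\xi$ is the technical heart on which the stated $\mathcal A$-norm limit rests.
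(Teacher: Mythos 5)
Your reduction to the normalized product and your single-factor computation are both correct, and the obstacle you flag at the end is not a technical difficulty awaiting a cleverer estimate --- it is fatal. The cancellation you hope to extract cannot exist, because the claimed limit is false in the $\mathcal{A}$-norm whenever $p$ genuinely needs pseudo-reversing, i.e.\ whenever $\Lambda\cap\mathbb{T}\neq\emptyset$. Pick $r_0\in\Lambda\cap\mathbb{T}$: then $p(r_0)=0$, while $p_\xi^\dagger$ is finite everywhere on $\mathbb{T}$ since its poles sit at modulus $1+\xi>1$; hence $(p_\xi^\dagger p)(r_0)=0$ for every $\xi>0$. Since the Wiener norm dominates the sup norm, $\|f\|_{L^\infty(\mathbb{T})}\le\|f\|_{\mathcal{A}}$, this forces $\|p_\xi^\dagger p-1\|_{\mathcal{A}}\ge\bigl|(p_\xi^\dagger p)(r_0)-1\bigr|=1$ for all $\xi>0$. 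Your per-factor analysis already shows the true order: for $p(z)=(z+1)/2$ one computes $p_\xi^\dagger(z)p(z)-1=\xi(z-1)/\bigl(2(z+1+\xi)\bigr)$, whose coefficient $\ell_1$-norm equals exactly $1$ for every $\xi>0$. So the $O(\xi)$ numerator and the $1/\xi$ mass of the geometric tail do not merely balance in your crude bound; they balance in reality, and no telescoping or localized refinement can push the limit to $0$. (When $\Lambda\cap\mathbb{T}=\emptyset$ the statement is trivial, since then $p_\xi^\dagger p\equiv1$.)

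For comparison, the paper's own proof derives exactly your product identity
\[
p_\xi^\dagger(z)p(z)=\prod_{r\in\Lambda\cap\mathbb{T}}\frac{(z-r)\bigl(1-(1+\xi)r\bigr)}{(1-r)\bigl(z-(1+\xi)r\bigr)},
\]
then asserts that, the product being finite, $p_\xi^\dagger p-1$ tends to $0$ ``for all $z\in\mathbb{C}$'' and concludes ``by the continuity of $\|\cdot\|_{\mathcal{A}}$''. Both steps fail: pointwise convergence breaks down precisely at $z\in\Lambda\cap\mathbb{T}$, where the product vanishes identically in $\xi$, and in any case pointwise (even locally uniform) convergence does not imply convergence in $\mathcal{A}(\mathbb{T})$. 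What the identity does yield is convergence to $1$ uniformly on compact sets avoiding $\Lambda\cap\mathbb{T}$, and convergence in weaker norms such as $L^2(\mathbb{T})$ (at rate $O(\sqrt{\xi})$ in the single-factor example above, by Parseval). In short, your blind attempt is more careful than the paper's argument: you isolated the exact point --- the on-circle zeros, where the product is pinned at $0$ rather than $1$ --- at which any proof must break. The correct remedy is to weaken the proposition (locally uniform convergence off $\Lambda\cap\mathbb{T}$, or an $L^2(\mathbb{T})$ statement), not to sharpen the $\mathcal{A}$-norm estimate.
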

\begin{proof}
    Direct calculations with an explicit consideration of the constant $C(p^\dagger_\xi)$ of~\eqref{eqn:pseudo_reverse} show that
    \begin{align*}
        p^\dagger_\xi(z)p(z) = \prod_{r\in\Lambda\cap\mathbb{T}}\frac{(z-r)(1-(1+\xi)r)}{(1-r)(z-(1+\xi)r)}.
    \end{align*}
    Because the last product is \emph{finite}, we have that the difference $p^\dagger_\xi(z)p(z)-1$ converges to $0$ for all $z\in\mathbb{C}$ as $\xi$ approaches $0^+$. The desired result then follows by the continuity of $\|\cdot\|_\mathcal{A}$.
\end{proof}

The last proposition guarantees that the distance (induced by the norm) between $p_\xi^\dagger p$ and the identity $1$ approaches $0$ as $\xi$ approaches $0^+$.

\begin{prop}\label{prop:xi_infinity}
    If all zeros of the polynomial $p$ are on the unit circle, then $p_\xi^{\dagger}$ converges uniformly to $1$ as $\xi$ approaches $\infty$ on every compact subset of $\mathbb{C}$.
\end{prop}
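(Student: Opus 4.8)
The plan is to first reduce~\eqref{eqn:pseudo_reverse} to an explicit finite product under the hypothesis, and then pass to the limit factor by factor. Since every zero of $p$ lies on $\mathbb{T}$, the set $\Lambda\setminus\mathbb{T}$ is empty and $\Lambda\cap\mathbb{T}=\Lambda$, so the first product in~\eqref{eqn:pseudo_reverse} is vacuous and $p_\xi^{-\dagger}(z)=C(p_\xi^\dagger)\prod_{r\in\Lambda}(z-(1+\xi)r)$. I would then impose the normalization $p_\xi^\dagger(1)=1$ to solve for the constant, obtaining $C(p_\xi^\dagger)=\big(\prod_{r\in\Lambda}(1-(1+\xi)r)\big)^{-1}$, and hence the closed form
\[
p_\xi^\dagger(z)=\prod_{r\in\Lambda}\frac{1-(1+\xi)r}{z-(1+\xi)r}.
\]

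Next, because each $r\in\Lambda$ satisfies $|r|=1$ and in particular $r\neq 0$, I would divide the numerator and denominator of every factor by $-(1+\xi)r$ to rewrite
\[
p_\xi^\dagger(z)=\prod_{r\in\Lambda}\frac{1-\tfrac{1}{(1+\xi)r}}{1-\tfrac{z}{(1+\xi)r}},
\]
from which a short computation yields the per-factor identity
\[
\frac{1-\tfrac{1}{(1+\xi)r}}{1-\tfrac{z}{(1+\xi)r}}-1=\frac{(z-1)/\big((1+\xi)r\big)}{1-z/\big((1+\xi)r\big)}.
\]

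To promote this to uniform convergence on a compact set $K\subset\mathbb{C}$, I would fix $M>0$ with $|z|\le M$ for all $z\in K$ and restrict attention to $\xi$ with $(1+\xi)>2M$. Then $|z/((1+\xi)r)|=|z|/(1+\xi)\le 1/2$, so each denominator has modulus at least $1/2$ and the displayed difference is bounded by $2|z-1|/(1+\xi)\le 2(M+1)/(1+\xi)$, uniformly in $z\in K$. Thus each of the finitely many factors (there are $\deg p$ of them, counted with multiplicity) converges uniformly to $1$ and is uniformly bounded on $K$; a standard telescoping estimate for a product of finitely many such factors then gives $p_\xi^\dagger\to 1$ uniformly on $K$ as $\xi\to\infty$.

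The only real obstacle is the uniformity of the limit: one must keep the denominators bounded away from zero over all of $K$ simultaneously, and this is precisely what the threshold $(1+\xi)>2M$ secures. Everything else—evaluating the normalization constant and telescoping the finite product—is routine.
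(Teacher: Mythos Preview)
Your proof is correct and follows essentially the same approach as the paper: reduce to the explicit product $p_\xi^\dagger(z)=\prod_{r\in\Lambda}\frac{1-(1+\xi)r}{z-(1+\xi)r}$ via the normalization $p_\xi^\dagger(1)=1$, and then let each factor tend to $1$. Your version is in fact more careful than the paper's, which simply writes the limit and asserts it holds on compact sets, whereas you supply the explicit threshold $(1+\xi)>2M$ and the telescoping estimate that actually secure the uniformity.
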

\begin{proof}
    The assumption implies that $\Lambda\subset\mathbb{T}$. Therefore, as $\xi$ approaches infinity we have
    \begin{align*}
        \lim_{\xi\rightarrow\infty} p_\xi^\dagger(z) = \lim_{\xi\rightarrow\infty} \frac{\prod_{r\in\Lambda}(1-(1+\xi)r)}{\prod_{r\in\Lambda}(z-(1+\xi)r)} = \lim_{\xi\rightarrow\infty} \prod_{r\in\Lambda}\frac{1-r-\xi r}{z-r-\xi r} = 1
    \end{align*}
    for all $z$ in any compact subset of $\mathbb{C}$.
\end{proof}

The notion of pseudo-reversing induces the necessity of proposing a condition number to quantify the reversibility of functions in $\mathcal{A}(\mathbb{T})$. Conventionally, the condition number of a non-reversible function should take the value $\infty$, whereas the ``best'' reversible function should take the value $1$. Inspired by~\cite{strohmer2002four}, the following definition introduces such a condition number.

\begin{defin}~\label{def:condition_number}
    The reversibility condition number $\kappa \colon \mathcal{A}(\mathbb{T})\rightarrow[1,\infty]$ acting on a function $f\in\mathcal{A}(\mathbb{T})$ is given by
    \begin{equation}~\label{eqn:condition_number}
        \kappa(f)=\frac{\sup_{z\in\mathbb{T}}|f(z)|}{\inf_{z\in\mathbb{T}}|f(z)|},
    \end{equation}
    with the convention $\kappa(f)=\infty$ for functions with $\inf_{z\in\mathbb{T}}|f(z)|=0$.
\end{defin}

The nature of this definition implies that $\kappa(f)$ is well defined for any $f\in\mathcal{A}(\mathbb{T})$ and returns values in $[1,\infty]$. Moreover, the condition number $\kappa(f)$ is invariant under operations that preserve the ratio between the supremum and infimum of $f$ over unit circle $\mathbb{T}$. For example, under scaling and rotating; we have $\kappa(\rho e^{i\theta}f)=\kappa(f)$ for any $\theta\in[0,2\pi)$ and $\rho>0$. Furthermore, $\kappa$ is submultiplicative; for any two functions $f$ and $g$ we have $\kappa(fg)\leq \kappa(f)\kappa(g)$. Meaning that, reversing a product of functions would not be worse than reversing each factor solely.

The rationale behind treating $\kappa(f)$ of~\eqref{eqn:condition_number} as a condition number for reversing a function $f\in\mathcal{A}(\mathbb{T})$ lies in the following fact. Each function $f(t)=\sum_{k\in\mathbb{Z}}a_ke^{2\pi ikt}$ with coefficients $\boldsymbol{a}\in\ell_1(\mathbb{Z})$ defines a bi-infinite Toeplitz matrix of the form $M_{f} = (a_{j-k})_{j,k\in\mathbb{Z}}$. In this case, $M_{f}$ has an inverse if and only if $f$ is
reversible. Moreover, we have $(M_f)^{-1}=M_{f^{-1}}$. Overall, there is a clear analogy between our condition number of a function, as presented in Definition~\ref{def:condition_number}, and that of a matrix. Recall that the condition number of a matrix is defined as the ratio of its largest singular value to its smallest singular value. Furthermore, under certain conditions imposed on the Toeplitz matrix $M_f$, see Theorem 2.1 in~\cite{strohmer2002four}, the entries of its inverse exhibit an exponential decay with respect to their position in the matrix. Adapted to the context of pseudo-reversing, we reformulate the result in the following proposition.

\begin{prop}~\label{prop:decay_of_reverse}
    Let $f(t)=\sum_{k\in\mathbb{Z}}a_ke^{2\pi ikt}\in\mathcal{A}(\mathbb{T})$ be a positive function on the unit circle $\mathbb{T}$. Assume that $f$ is $s$-banded, that is, $a_k=0$ for all $|k|>s$. Fix 
    \begin{equation*}
        \lambda=\bigg(\frac{\sqrt{\kappa(f)}-1}{\sqrt{\kappa(f)}+1}\bigg)^{1/s} \quad \text{and} \quad C=\frac{1}{\inf_{t\in\mathbb{T}}|f(t)|}\max\bigg\{1, \frac{(1+\sqrt{\kappa(f)})^2}{2\kappa(f)}\bigg\}
    \end{equation*}
    where $\kappa(f)$ is the condition number~\eqref{eqn:condition_number} of $f$. Denote by $1/f(t)=\sum_{k\in\mathbb{Z}}b_ke^{2\pi ikt}$. Then,
    \begin{equation*}
        |b_k|\leq C\lambda^{|k|}, \quad k\in\mathbb{Z}.
    \end{equation*}
    Moreover, the coefficients $b_k$ are, in general, not banded.
\end{prop}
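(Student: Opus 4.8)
The plan is to pass to the bi-infinite Toeplitz (Laurent) matrix $M_f=(a_{j-k})_{j,k\in\mathbb{Z}}$ associated with $f$, as in the discussion preceding the statement, and to reduce the decay of the Fourier coefficients $b_k$ to the off-diagonal decay of the entries of $M_f^{-1}$. Since $f$ is real, positive, and $s$-banded, $M_f$ is a self-adjoint, positive-definite band operator on $\ell_2(\mathbb{Z})$ with bandwidth $s$; under the Fourier isomorphism $\ell_2(\mathbb{Z})\cong L^2(\mathbb{T})$ it is unitarily equivalent to multiplication by $f$, so its spectrum is the range $f(\mathbb{T})$, hence contained in $[\alpha,\beta]$ with $\alpha=\inf_{t\in\mathbb{T}}|f(t)|>0$ and $\beta=\sup_{t\in\mathbb{T}}|f(t)|$. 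In particular $M_f$ is invertible with $\|M_f^{-1}\|\le 1/\alpha$, its spectral condition number equals $\beta/\alpha=\kappa(f)$, and $M_f^{-1}=M_{1/f}$ has entries that are constant along diagonals, namely $(M_{1/f})_{j,k}=b_{j-k}$. Thus it suffices to bound $|(M_f^{-1})_{j,k}|$ in terms of $|j-k|$.

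Next I would invoke the Chebyshev-approximation mechanism underlying Theorem 2.1 of~\cite{strohmer2002four} (the Demko--Moss--Smith estimate). For each $n\in\mathbb{N}$ let $p_n$ be the degree-$n$ polynomial of best uniform approximation to $g(x)=1/x$ on $[\alpha,\beta]$; standard Chebyshev estimates on an interval give
\[
\|g-p_n\|_{\infty,[\alpha,\beta]}\le \frac{1}{\alpha}\,\frac{(1+\sqrt{\kappa(f)})^2}{2\kappa(f)}\,q^{\,n},\qquad q=\frac{\sqrt{\kappa(f)}-1}{\sqrt{\kappa(f)}+1}.
\]
Because $M_f$ has bandwidth $s$, the matrix $p_n(M_f)$ has bandwidth $ns$, so for any pair of indices with $|j-k|>ns$ one has $(p_n(M_f))_{j,k}=0$ and therefore, by the self-adjoint functional calculus,
\[
|(M_f^{-1})_{j,k}|=|(M_f^{-1}-p_n(M_f))_{j,k}|\le\|g(M_f)-p_n(M_f)\|\le\|g-p_n\|_{\infty,[\alpha,\beta]}.
\]

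To extract the stated bound I would choose $n=\floor{|j-k|/s}$, so that $q^{\,n}$ is controlled by $q^{|j-k|/s}=\lambda^{|j-k|}$ with $\lambda=q^{1/s}$; carrying out the optimization exactly as in~\cite{strohmer2002four}, and handling the near-diagonal regime $|j-k|\le s$ (where the entry is simply bounded by $\|M_f^{-1}\|\le 1/\alpha$) via the first argument of the maximum, yields precisely the constant $C=\tfrac{1}{\alpha}\max\{1,(1+\sqrt{\kappa(f)})^2/(2\kappa(f))\}$. Reading this estimate off along the diagonal $(j,k)=(k,0)$ gives $|b_k|\le C\lambda^{|k|}$. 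For the non-bandedness claim I would argue by contradiction: if only finitely many $b_k$ were nonzero, then both $f$ and $1/f$ would be Laurent polynomials in $z=e^{2\pi i t}$ whose product is $1$, forcing each to be a unit of $\mathbb{C}[z,z^{-1}]$, that is, a monomial; this is impossible for a positive $s$-banded symbol with $s\ge 1$, as already seen in Example~\ref{example_2}.

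The main obstacle is the quantitative heart borrowed from~\cite{strohmer2002four}: producing the sharp geometric rate $q$ and its constant from Chebyshev approximation of $1/x$, matching them with the factor-of-$s$ loss coming from the bandwidth of $p_n(M_f)$, and then carefully converting the matrix off-diagonal decay into Fourier-coefficient decay through the Toeplitz structure. By comparison, verifying that $M_f$ is positive definite with spectral condition number exactly $\kappa(f)$, and that $M_f^{-1}=M_{1/f}$, is the routine part.
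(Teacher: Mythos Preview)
Your proposal is correct and is precisely the Demko--Moss--Smith/Chebyshev-approximation argument that underlies Theorem~2.1 in~\cite{strohmer2002four}. Note, however, that the paper does not supply its own proof of this proposition: it is stated as a direct reformulation of that cited result, so there is no in-paper argument to compare against beyond the reference itself.
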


The last proposition ensures that smaller values of $\kappa(f)$ lead to a faster decay in the coefficients of the reverse $1/f$. Next, the following corollary shows that increasing the parameter $\xi$ of pseudo-reversing~\eqref{eqn:pseudo_reverse} ensures a better reversibility condition number.

\begin{coro}~\label{coro:kappa_of_pseudo}
    Let $p$ be a polynomial with $n\in\mathbb{N}$ zeros all on the unit circle. Then,
    \begin{equation*}
        \kappa(p_\xi^{-\dagger}) = \frac{\sup_{z\in\mathbb{T}}|\prod_{r\in\Lambda}(z-(1 + \xi) r)|}{\inf_{z\in\mathbb{T}}|\prod_{r\in\Lambda}(z-(1 + \xi) r)|}\leq \prod_{r\in\Lambda}\frac{\sup_{z\in\mathbb{T}}|z-(1 + \xi) r|}{\inf_{z\in\mathbb{T}}|z-(1 + \xi) r|} = (1+2/\xi)^n
    \end{equation*}
    where $\xi$ is the pseudo-reversing parameter. In the last equality, the supremum in the numerator is obtained at $z=-r$, while the infimum in the denominator is obtained at $z=r$.
\end{coro}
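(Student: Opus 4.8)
The plan is to read the displayed chain of (in)equalities from left to right, justifying each transition by one of the structural properties of $\kappa$ already recorded in the text, and to close with an elementary planar-geometry computation for a single linear factor. Abbreviating $q(z)=\prod_{r\in\Lambda}(z-(1+\xi)r)$, the pseudo-approximant is $p_\xi^{-\dagger}(z)=C(p_\xi^{\dagger})\,q(z)$, so the two differ only by the multiplicative constant $C(p_\xi^{\dagger})$. Since $\kappa$ is invariant under scaling and rotation, namely $\kappa(\rho e^{i\theta}f)=\kappa(f)$ for every $\rho>0$ and $\theta\in[0,2\pi)$, this constant cancels in the ratio of supremum to infimum. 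Hence $\kappa(p_\xi^{-\dagger})=\kappa(q)$, which is exactly the first displayed ratio.

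Next I would obtain the middle inequality from the submultiplicativity of $\kappa$. Writing $f_r(z)=z-(1+\xi)r$, we have $q=\prod_{r\in\Lambda}f_r$, and applying $\kappa(fg)\le\kappa(f)\kappa(g)$ inductively over the $n$ factors yields $\kappa(q)\le\prod_{r\in\Lambda}\kappa(f_r)$. Equivalently, this is the elementary observation that the supremum of a product of moduli is at most the product of the suprema, while the infimum is at least the product of the infima; dividing these two bounds produces precisely the middle term of the display.

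Finally I would compute $\kappa(f_r)$ for a single root $r\in\Lambda\subset\mathbb{T}$. Here $(1+\xi)r$ is the point at distance $1+\xi>1$ from the origin lying on the ray through $r$, and $|f_r(z)|$ is the distance from $z\in\mathbb{T}$ to this exterior point. Expanding,
\begin{equation*}
    |f_r(z)|^2 = 1 + (1+\xi)^2 - 2(1+\xi)\operatorname{Re}(z\bar r),
\end{equation*}
and since $\operatorname{Re}(z\bar r)$ ranges over $[-1,1]$ as $z$ traverses $\mathbb{T}$, the distance is maximal at $z=-r$ (where $\operatorname{Re}(z\bar r)=-1$) and minimal at $z=r$ (where $\operatorname{Re}(z\bar r)=1$). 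Evaluating gives $|f_r(-r)|=2+\xi$ and $|f_r(r)|=\xi$, so $\kappa(f_r)=(2+\xi)/\xi=1+2/\xi$. As there are exactly $n$ zeros, the product over $\Lambda$ equals $(1+2/\xi)^n$, establishing the final equality.

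I do not anticipate a genuine obstacle: the only point needing a word of care is the extremal characterization in the last step, that a point on the unit circle attains its extreme distances to an exterior point at the two collinear intersection points. This is handled cleanly by the quadratic expression above, since the coefficient $-2(1+\xi)$ is negative and $\operatorname{Re}(z\bar r)$ attains both endpoints of $[-1,1]$, leaving the remaining computations routine.
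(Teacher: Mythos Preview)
Your proposal is correct and follows essentially the same approach as the paper, which in fact presents the corollary without a separate proof and merely records the hints about where the supremum and infimum are attained. You have supplied the details the paper leaves implicit: the scaling invariance of $\kappa$ to dispose of the constant $C(p_\xi^{\dagger})$, the submultiplicativity of $\kappa$ for the middle inequality, and the explicit quadratic computation $|z-(1+\xi)r|^2=1+(1+\xi)^2-2(1+\xi)\operatorname{Re}(z\bar r)$ for the single-factor extremum.
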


The first conclusion from  Corollary~\ref{coro:kappa_of_pseudo} is that the more we push the zeros of $p$ away from the unit circle, the more reversible the polynomial $p_\xi^{-\dagger}$ becomes. Consequently, using Proposition~\ref{prop:decay_of_reverse} we can get an estimation of $\|p^\dagger_\xi\|_\mathcal{A}$ with respect to $\xi$.

A natural implication of Corollary~\ref{coro:kappa_of_pseudo} and Proposition~\ref{prop:pseudo_approximation} is that there exists a trade-off between how well we approximate $p$ with $p_\xi^{-\dagger}$, and how reversible $p_\xi^{-\dagger}$ becomes. The question of finding the optimal parameter $\xi$, which simultaneously minimizes the perturbation $p-p_\xi^{-\dagger}$ and $\kappa(p_\xi^{-\dagger})$, can be answered by numerical tests, as we will see in Section~\ref{sec:numerical_examples}. We conclude the section by remarking that pseudo-reversing can be applied to analytic functions. And in the following sections, we will see a use of pseudo-reversing in the context of multiscaling.

\begin{remark}
    Let $f\in\mathcal{A}(\mathbb{T})$ be an analytic function on the unit circle. We describe how to pseudo-reverse $f$ with some arbitrary error. First, we represent $f$ by its Laurent series, which converges uniformly to $f$ in some compact annulus containing the unit circle. Thanks to the analyticity of $f$, its zeros are guaranteed to be isolated. Then, we truncate the power series to get a polynomial approximating $f$ to any desired degree. Finally, the polynomial can be reversed via~\eqref{eqn:pseudo_reverse}. The result can be considered as the pseudo-reverse of $f$.
\end{remark}


\section{Linear pyramid transform based on pseudo-reversing} \label{sec:linearPyramidTrans}

In this section, we present the quintessential ideas and notations needed to realize our novel pyramid transform. Then, we use the notion of pseudo-reversing from Section~\ref{sec:pseudo-reversing} to introduce the new pyramid transform. Finally, we study its analytical properties.

\subsection{Background}

Multiscale transforms decompose a real-valued sequence $\boldsymbol{c}^{(J)} = \lbrace c^{(J)}_k\in\mathbb{R}\;\big|\; k\in 2^{-J}\mathbb{Z}\rbrace$ given over the dyadic grid of scale $J\in\mathbb{N}$, to a pyramid of the form $\lbrace \boldsymbol{c}^{(0)}; \boldsymbol{d}^{(1)}, \dots, \boldsymbol{d}^{(J)}\rbrace$ where $\boldsymbol{c}^{(0)}$ is a coarse approximation of $\boldsymbol{c}^{(J)}$ given on the integers $\mathbb{Z}$, and $\boldsymbol{d}^{(\ell)}$, $\ell=1,\dots,J$ are the \emph{detail coefficients} associated with the dyadic grids $2^{-\ell}\mathbb{Z}$, respectively. We focus on transforms that involve refinement operators $\mathcal{S}$ as upsampling operators, and decimation operators $\mathcal{D}$ as their downsampling counterparts. Namely, the multiscale analysis is defined recursively by
\begin{equation}~\label{linear_multiscale_analysis}
\boldsymbol{c}^{(\ell-1)} = \mathcal{D}\boldsymbol{c}^{(\ell)}, \quad \boldsymbol{d}^{(\ell)} = \boldsymbol{c}^{(\ell)} - \mathcal{S}\boldsymbol{c}^{(\ell-1)}, \quad \ell=1,\dots,J,
\end{equation}
while the inverse transform, i.e., the multiscale synthesis is given by
\begin{equation}~\label{linear_multiscale_synthesis}
\boldsymbol{c}^{(\ell)} = \mathcal{S}\boldsymbol{c}^{(\ell - 1)} + \boldsymbol{d}^{(\ell)}, \quad \ell=1,\dots,J.
\end{equation}
Practically, the role of the detail coefficients $\boldsymbol{d}^{(\ell)}$, $\ell=1,\dots,J$ in~\eqref{linear_multiscale_analysis} is to store the data needed to reconstruct $\boldsymbol{c}^{(\ell)}$, that is, approximant of $\boldsymbol{c}^{(J)}$ at scale $\ell$, using the coarser approximant $\boldsymbol{c}^{(\ell-1)}$ of the predecessor scale $\ell-1$. Figure~\ref{diag:pyramid_transform} illustrates the iterative calculations of the multiscale transform~\eqref{linear_multiscale_analysis} and its inverse~\eqref{linear_multiscale_synthesis}.

\begin{figure}[H]
 \centering
 \begin{subfigure}{.5\textwidth}
    \centering
    \begin{tikzcd}
        \boldsymbol{c}^{(J)} \arrow[d, "\mathcal{D}"] \arrow[dr, "-"]
        \\
        \boldsymbol{c}^{(J-1)} \arrow[d, "\mathcal{D}"] \arrow[dr, "-"] & \boldsymbol{d}^{(J)} \\
        \boldsymbol{c}^{(J-2)} \arrow[d, dashed] & \boldsymbol{d}^{(J-1)} \\
        \boldsymbol{c}^{(1)} \arrow[d, "\mathcal{D}"] \arrow[dr,"-"]
        \\
        \boldsymbol{c}^{(0)} & \boldsymbol{d}^{(1)}
    \end{tikzcd}
    \caption{}
    \label{Analysis}
    \end{subfigure}%
    \begin{subfigure}{.5\textwidth}
    \centering
    \begin{tikzcd}
        & \boldsymbol{c}^{(J)} \\
        \boldsymbol{d}^{(J)} \arrow[ur, "+"] & \boldsymbol{c}^{(J-1)} \arrow[u, "\mathcal{S}"] \\
        & \boldsymbol{c}^{(2)} \arrow[u, dashed] \\
        \boldsymbol{d}^{(2)} \arrow[ur, "+"] & \boldsymbol{c}^{(1)} \arrow[u, "\mathcal{S}"] \\
        \boldsymbol{d}^{(1)} \arrow[ur, "+"] & \boldsymbol{c}^{(0)} \arrow[u, "\mathcal{S}"]
    \end{tikzcd}
    \caption{}
    \label{Synthesis}
    \end{subfigure}
 \caption{The pyramid transform. On the left, the analysis~\eqref{linear_multiscale_analysis}. On the right, the synthesis~\eqref{linear_multiscale_synthesis}.}
 \label{diag:pyramid_transform}
\end{figure}

Let $\mathcal{S}_{\boldsymbol{\alpha}}$ be a linear univariate subdivision scheme, consisting of two refinement rules and associated with a finitely supported mask $\boldsymbol{\alpha}$. The subdivision scheme $\mathcal{S}_{\boldsymbol{\alpha}}$ is given explicitly by
\begin{equation}~\label{linear_subdivision_scheme}
    \mathcal{S}_{\boldsymbol{\alpha}}(\boldsymbol{c})_j = \sum_{k\in\mathbb{Z}}\alpha_{j-2k}c_k, \quad j\in\mathbb{Z}.
\end{equation}
Applying the refinement $\mathcal{S}_{\boldsymbol{\alpha}}$ on a sequence $\boldsymbol{c}$ associated with the integers, yields a sequence $\mathcal{S}_{\boldsymbol{\alpha}}(\boldsymbol{c})$ associated with the values over the refined grid $2^{-1}\mathbb{Z}$. Depending on the parity of the index $j$, the subdivision scheme~\eqref{linear_subdivision_scheme} can be split into two refinement rules. Namely,
\begin{equation}~\label{two_subdivision_rules}
\mathcal{S}_{\boldsymbol{\alpha}}(\boldsymbol{c})_{2j} = \sum_{k\in\mathbb{Z}}\alpha_{2k}c_{j-k} \quad \text{and} \quad \mathcal{S}_{\boldsymbol{\alpha}}(\boldsymbol{c})_{2j+1} = \sum_{k\in\mathbb{Z}}\alpha_{2k+1}c_{j-k}, \quad j\in\mathbb{Z}.
\end{equation}
The refinement rule~\eqref{linear_subdivision_scheme} is termed interpolating if $\mathcal{S}_{\boldsymbol{\alpha}}(\boldsymbol{c})_{2j} = c_j$ for all $j\in\mathbb{Z}$. Moreover, a necessary condition for the convergence of a subdivision scheme with the refinement rule, see Proposition 2.1 in~\cite{dyn1992subdivision}, is
\begin{equation}~\label{eqn:partition_of_unity}
\sum_{k\in\mathbb{Z}}\alpha_{2k} = \sum_{k\in\mathbb{Z}}\alpha_{2k+1} = 1.
\end{equation}
With this property, the rules~\eqref{two_subdivision_rules} can be interpreted as moving \emph{center of masses} of the elements of $\boldsymbol{c}$. We assume that any refinement mentioned is of convergent refinement operators.

Given a refinement rule $\mathcal{S}_{\boldsymbol{\alpha}}$, we look for a decimation operator $\mathcal{D}$ such that the detail coefficients $\boldsymbol{d}^{(\ell)}$ generated by the multiscale transform~\eqref{linear_multiscale_analysis}, vanish at all even indices. That is, $d^{(\ell)}_{2j}=0$ for all $j\in\mathbb{Z}$ and $\ell=1,\dots,J$. This property is beneficial for many tasks including data compression. If such a decimation operator $\mathcal{D}$ exists and involves a sequence in $\ell_1(\mathbb{Z})$, then $\mathcal{S}_{\boldsymbol{\alpha}}$ is termed \emph{reversible}, and $\mathcal{D}$ is its \emph{reverse}. This terminology will agree with Definition~\ref{defin:reversible_function} as we will see next. Though, we note here that in~\cite{mattar2023pyramid, dyn2021linear}, such refinement is termed even-reversible.

It turns out that the operator $\mathcal{S}_{\boldsymbol{\alpha}}$ is reversible if its corresponding reverse $\mathcal{D}$ is associated with a real-valued sequence $\boldsymbol{\gamma}\in\ell_1(\mathbb{Z})$ and takes the form
\begin{equation}~\label{linear_decimation}
\mathcal{D}_{\boldsymbol{\gamma}}(\boldsymbol{c})_j = \sum_{k\in\mathbb{Z}}\gamma_{j-k}c_{2k}, \quad j\in\mathbb{Z},
\end{equation}
for any real-valued sequence $\boldsymbol{c}$, while $\boldsymbol{\gamma}$ solves the convolutional equation
\begin{equation}~\label{convolutional_equation}
(\boldsymbol{\alpha}\downarrow 2) * \boldsymbol{\gamma} = \boldsymbol{\delta},
\end{equation}
where $\boldsymbol{\alpha}\downarrow 2$ denotes the even elements of $\boldsymbol{\alpha}$, i.e., $(\boldsymbol{\alpha}\downarrow 2)_j = \alpha_{2j}$ for $j\in\mathbb{Z}$, and $\boldsymbol{\delta}$ is the Kronecker delta sequence ($\delta_0=1$ and $\delta_j=0$ for $j\neq0$).

Contrary to the refinement rule~\eqref{linear_subdivision_scheme}, applying the decimation operator~\eqref{linear_decimation} on a sequence $\boldsymbol{c}$ associated with the dyadic grid $2^{-1}\mathbb{Z}$ produces a sequence $\mathcal{D}_{\boldsymbol{\gamma}}(\boldsymbol{c})$ associated with the integers $\mathbb{Z}$, and hence the term \emph{decimation}. Put simply, the decimation operator convolves the sequence $\boldsymbol{\gamma}$ with the even elements of $\boldsymbol{c}$. If a solution $\boldsymbol{\gamma}$ to~\eqref{convolutional_equation} exists, then we call the coefficients of $\boldsymbol{\gamma}$ the \emph{decimation coefficients}. Moreover, if the refinement is interpolating, that is $\boldsymbol{\alpha}\downarrow 2=\boldsymbol{\delta}$, then $\mathcal{D}_{\boldsymbol{\delta}}$ becomes the simple downsampling operator $\downarrow 2$, returning only the even elements of the input sequence. This fact coincides with the construction of other multiscale analysis methods, see for example~\cite{conti2010full, grohs2009interpolatory, rahman2005multiscale}. The following remark is essential to solve~\eqref{convolutional_equation} and makes the key connection to pseudo-reversing~\eqref{eqn:pseudo_reverse} introduced in Section~\ref{sec:pseudo-reversing}.

\begin{remark}~\label{remark:gamma}
We treat the entries of the sequences appearing in~\eqref{convolutional_equation} as the Fourier coefficients of functions in $\mathcal{A}(\mathbb{T})$, and rely on the convolution theorem to solve the equation. In particular, we transfer both sides with the transform $\boldsymbol{c}\rightarrow\boldsymbol{c}(z)=\sum_{k\in\mathbb{Z}}c_kz^k$ to get
\begin{equation}~\label{Z_transform}
    (\boldsymbol{\alpha}\downarrow2)(z)\boldsymbol{\gamma}(z)=1.
\end{equation}
The function $\boldsymbol{c}(z)$ is termed the symbol of $\boldsymbol{c}$. In other words, given a compactly supported refinement mask $\boldsymbol{\alpha}\downarrow2$ defining the symbol $(\boldsymbol{\alpha}\downarrow 2)(z)\in\mathcal{A}(\mathbb{T})$, we look for its reverse $\boldsymbol{\gamma}(z)$, as defined in Definition~\ref{defin:reversible_function}. The solution $\boldsymbol{\gamma}$ of~\eqref{convolutional_equation} is then the absolutely convergent Fourier coefficients of $\boldsymbol{\gamma}(z)$. If $(\boldsymbol{\alpha}\downarrow2)(z)$ is not reversible, then we turn to pseudo-reversing~\eqref{eqn:pseudo_reverse}, with some parameter $\xi$, to make practical use of this notion.
\end{remark}

Using Proposition~\ref{prop:decay_of_reverse}, the solution $\boldsymbol{\gamma}$ of~\eqref{convolutional_equation} does not have a compact support. This elevates computational challenges. However, a recent study~\cite{mattar2023pyramid} has approximated the decimation operator~\eqref{linear_decimation} with operators involving compactly supported coefficients via proper truncation. The study was concluded with decimation operators that are concretely executable, with negligible errors.

Only when the solution $\boldsymbol{\gamma}$ of~\eqref{Z_transform} is obtained, we are able to employ the refinement operator $\mathcal{S}_{\boldsymbol{\alpha}}$ of~\eqref{linear_subdivision_scheme} together with its reverse $\mathcal{D}_{\boldsymbol{\gamma}}$ of~\eqref{linear_decimation} into the multiscale transform~\eqref{linear_multiscale_analysis} as we will see next. By the nature of this construction, we will indeed have $d^{(\ell)}_{2j}=0$ for all $j\in\mathbb{Z}$ and $\ell=1,\dots,J$. Inspired by the necessity of using sequences $\boldsymbol{\alpha}$ and $\boldsymbol{\gamma}$ that do not particularly satisfy~\eqref{convolutional_equation}, we define the linear operator $\pi^{\boldsymbol{\alpha}}_{\boldsymbol{\gamma}}:\ell_{\infty}(\mathbb{Z})\rightarrow\ell_{\infty}(\mathbb{Z})$, mapping bounded real-valued sequences as follows
\begin{equation}~\label{eqn:even_details}
    \pi^{\boldsymbol{\alpha}}_{\boldsymbol{\gamma}}(\boldsymbol{c}) = [(\mathcal{I}-\mathcal{S}_{\boldsymbol{\alpha}} \mathcal{D}_{\boldsymbol{\gamma}})\boldsymbol{c}]\downarrow 2,
\end{equation}
where $\mathcal{I}$ is the identity operator. The operator $\pi^{\boldsymbol{\alpha}}_{\boldsymbol{\gamma}}$ measures the significance of the detail coefficients on the even indices, with one iteration of decomposition~\eqref{linear_multiscale_analysis} when applied to a sequence $\boldsymbol{c}$. Moreover, if $\boldsymbol{\alpha}$ and $\boldsymbol{\gamma}$ satisfy~\eqref{Z_transform}, then $\pi^{\boldsymbol{\alpha}}_{\boldsymbol{\gamma}}$ becomes the trivial zero operator.

\subsection{Multiscaling with pseudo-reverse operators}

In contrast to the multiscale transforms studied in~\cite{mattar2023pyramid, dyn2021linear} where the reversibility of the refinement is required, here we broaden the family of multiscale transforms~\eqref{linear_multiscale_analysis} to include general non-reversible refinement operators. One interesting family of non-reversible refinement operators is the \emph{least squares} introduced in~\cite{dyn2015univariate}. This branch of schemes was derived by fitting local least squares polynomials. We first exploit the idea of pseudo-reversing~\eqref{eqn:pseudo_reverse} to define the pseudo-reverse of a refinement operator.

\begin{defin}~\label{defin:pseudo-reversing_operators}
    Let $\mathcal{S}_{\boldsymbol{\alpha}}$ be a refinement operator as in~\eqref{linear_subdivision_scheme}. For $\xi>0$, the decimation operator $\mathcal{D}_{\boldsymbol{\gamma}}$ of~\eqref{linear_decimation} is the pseudo-reverse of $\mathcal{S}_{\boldsymbol{\alpha}}$ if
    \begin{equation*}
        \boldsymbol{\gamma}(z)=(\boldsymbol{\alpha}\downarrow 2)^\dagger_\xi(z),
    \end{equation*}
    where $(\boldsymbol{\alpha}\downarrow2)(z)$ and $\boldsymbol{\gamma}(z)$ are defined as in Remark~\ref{remark:gamma}, i.e. if $\boldsymbol{\gamma}$ is the pseudo-reverse of $\boldsymbol{\alpha}\downarrow 2$.
\end{defin}

In this definition, $\boldsymbol{\gamma}$ depends on the pseudo-reversing parameter $\xi$. We denote by $\widetilde{\boldsymbol{\alpha}}$ the mask of the approximating refinement. Namely, the coefficients of $\widetilde{\boldsymbol{\alpha}}\downarrow2$ are the Fourier coefficients of $(\boldsymbol{\alpha}\downarrow 2)^{-\dagger}_\xi(z)$, while the odd values of $\widetilde{\boldsymbol{\alpha}}$ are equal to the odd values of $\boldsymbol{\alpha}$. Similar to pseudo-reversing functions in $\mathcal{A}(\mathbb{T})$, the pseudo-reverse of a reversible refinement coincides with its reverse. Figure~\ref{diag:pseudo_reversing_operators} illustrates the process of pseudo-reversing refinements, and it is an analogue to Figure~\ref{diag:pseudo_reversing} with operators replacing functions.

\begin{figure}[H]
\centering
 \begin{tikzcd}[sep=large]
    \mathcal{S}_{\boldsymbol{\alpha}} \arrow[dr, "\substack{\text{approximation}}", swap, sloped]{} \arrow[rr, "\text{pseudo-reversing}"] & & \mathcal{D}_{\boldsymbol{\gamma}} \\
      & \mathcal{S}_{\widetilde{\boldsymbol{\alpha}}} \arrow[ur, "\substack{\text{reversing}}", swap, sloped]
 \end{tikzcd}
 \caption{Illustration of pseudo-reversing refinement operators.}
 \label{diag:pseudo_reversing_operators}
\end{figure}

Using Definition~\ref{defin:pseudo-reversing_operators} we are now able to generalize multiscale transforms~\eqref{linear_multiscale_analysis} to include a broader class of refinement rules. 

\begin{defin}
    The multiscale transform based on a non-reversible subdivision scheme $\mathcal{S}_{\boldsymbol{\alpha}}$ is the transform~\eqref{linear_multiscale_analysis} where we use its pseudo-inverse $\mathcal{D}_{\boldsymbol{\gamma}}$, for some $\xi>0$, as a decimation operator. The inverse transform~\eqref{linear_multiscale_synthesis} in this case remains unchanged.
\end{defin}


The cost of using $\mathcal{S}_{\boldsymbol{\alpha}}$ with its pseudo-reverse $\mathcal{D}_{\boldsymbol{\gamma}}$ in multiscaling~\eqref{linear_multiscale_analysis} arises as a violation of the property of having zero detail coefficients on the even indices. That is, $\pi^{\boldsymbol{\alpha}}_{\boldsymbol{\gamma}}$ of~\eqref{eqn:even_details} is not the zero operator. Consequently, requiring small detail coefficients leads to the necessity of studying the operator norm $\|\pi^{\boldsymbol{\alpha}}_{\boldsymbol{\gamma}}\|_{\text{op}}$, where $\boldsymbol{\alpha}$ and $\boldsymbol{\gamma}$ do not satisfy~\eqref{Z_transform}. By the operator norm we mean $\|\pi^{\boldsymbol{\alpha}}_{\boldsymbol{\gamma}}\|_{\text{op}}=\sup_{\boldsymbol{c}\neq \boldsymbol{0}}\|\pi^{\boldsymbol{\alpha}}_{\boldsymbol{\gamma}}(\boldsymbol{c})\|_{\infty}/\|\boldsymbol{c}\|_{\infty}$. The following proposition provides an upper bound on the operator norm $\|\pi^{\boldsymbol{\alpha}}_{\boldsymbol{\gamma}}\|_{\text{op}}$, which can be realized as a global upper bound on the detail coefficients on the even indices.

\begin{prop}~\label{prop:even_details_error}
Let $\boldsymbol{\alpha}$ be a subdivision scheme mask with a pseudo-reverse $\boldsymbol{\gamma}=(\boldsymbol{\alpha}\downarrow 2)^\dagger_\xi$ for some $\xi>0$. Then the operator $\pi^{\boldsymbol{\alpha}}_{\boldsymbol{\gamma}}$ of~\eqref{eqn:even_details} satisfies
\begin{equation}
    \|\pi^{\boldsymbol{\alpha}}_{\boldsymbol{\gamma}}\|_{\text{op}}
    \leq \|\boldsymbol{\alpha}-\widetilde{\boldsymbol{\alpha}}\|_1\|\boldsymbol{\gamma}\|_1,
\end{equation}
where $\widetilde{\boldsymbol{\alpha}}$ is the approximating mask of $\boldsymbol{\alpha}$.
\end{prop}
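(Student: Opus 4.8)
The plan is to pass from the operator composition to a single convolution acting on the even subsequence $\boldsymbol{c}\downarrow 2$, and then estimate in the $\ell_1$--$\ell_\infty$ convolution algebra. First I would unwind the definitions~\eqref{linear_subdivision_scheme} and~\eqref{linear_decimation}. Writing $\boldsymbol{e}=\mathcal{D}_{\boldsymbol{\gamma}}\boldsymbol{c}$, a direct re-indexing of $\mathcal{S}_{\boldsymbol{\alpha}}(\boldsymbol{e})_{2i}=\sum_m\alpha_{2(i-m)}\sum_k\gamma_{m-k}c_{2k}$ collapses the inner double sum into a convolution of $\boldsymbol{\alpha}\downarrow 2$ with $\boldsymbol{\gamma}$, giving
\[
[\mathcal{S}_{\boldsymbol{\alpha}}\mathcal{D}_{\boldsymbol{\gamma}}\boldsymbol{c}]\downarrow 2=\big((\boldsymbol{\alpha}\downarrow 2)*\boldsymbol{\gamma}\big)*(\boldsymbol{c}\downarrow 2).
\]
Since trivially $[\mathcal{I}\boldsymbol{c}]\downarrow 2=\boldsymbol{\delta}*(\boldsymbol{c}\downarrow 2)$, this identifies $\pi^{\boldsymbol{\alpha}}_{\boldsymbol{\gamma}}(\boldsymbol{c})$ with $\big(\boldsymbol{\delta}-(\boldsymbol{\alpha}\downarrow 2)*\boldsymbol{\gamma}\big)*(\boldsymbol{c}\downarrow 2)$; on the symbol side this is exactly $1-(\boldsymbol{\alpha}\downarrow 2)(z)\boldsymbol{\gamma}(z)$ multiplying the symbol of $\boldsymbol{c}\downarrow 2$.

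Second, I would exploit the defining property of the approximating mask. By Definition~\ref{defin:pseudo-reversing_operators} and the construction of $\widetilde{\boldsymbol{\alpha}}$, the pair $\widetilde{\boldsymbol{\alpha}}\downarrow 2$ and $\boldsymbol{\gamma}$ are genuine reverses, i.e. $(\widetilde{\boldsymbol{\alpha}}\downarrow 2)*\boldsymbol{\gamma}=\boldsymbol{\delta}$ (equivalently $(\widetilde{\boldsymbol{\alpha}}\downarrow 2)(z)\boldsymbol{\gamma}(z)=1$), even though $\boldsymbol{\alpha}\downarrow 2$ and $\boldsymbol{\gamma}$ are not. Substituting $\boldsymbol{\delta}=(\widetilde{\boldsymbol{\alpha}}\downarrow 2)*\boldsymbol{\gamma}$ yields the telescoping
\[
\boldsymbol{\delta}-(\boldsymbol{\alpha}\downarrow 2)*\boldsymbol{\gamma}=\big((\widetilde{\boldsymbol{\alpha}}-\boldsymbol{\alpha})\downarrow 2\big)*\boldsymbol{\gamma},
\]
which is the crucial step: it trades the deficiency of $\boldsymbol{\alpha}\downarrow 2$ at being reversed for the honest difference between $\boldsymbol{\alpha}$ and its approximant $\widetilde{\boldsymbol{\alpha}}$.

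Third comes the estimate. Taking $\ell_\infty$ norms and using that $\ell_1$ acts on $\ell_\infty$ by convolution with $\|u*v\|_\infty\le\|u\|_1\|v\|_\infty$, together with submultiplicativity $\|u*w\|_1\le\|u\|_1\|w\|_1$ of the convolution algebra $\ell_1(\mathbb{Z})$, gives
\[
\|\pi^{\boldsymbol{\alpha}}_{\boldsymbol{\gamma}}(\boldsymbol{c})\|_\infty\le\|(\widetilde{\boldsymbol{\alpha}}-\boldsymbol{\alpha})\downarrow 2\|_1\,\|\boldsymbol{\gamma}\|_1\,\|\boldsymbol{c}\downarrow 2\|_\infty.
\]
Finally I would observe that $\widetilde{\boldsymbol{\alpha}}$ and $\boldsymbol{\alpha}$ coincide on odd indices, so $\boldsymbol{\alpha}-\widetilde{\boldsymbol{\alpha}}$ is supported on even indices and $\|(\widetilde{\boldsymbol{\alpha}}-\boldsymbol{\alpha})\downarrow 2\|_1=\|\boldsymbol{\alpha}-\widetilde{\boldsymbol{\alpha}}\|_1$; combined with $\|\boldsymbol{c}\downarrow 2\|_\infty\le\|\boldsymbol{c}\|_\infty$ and dividing by $\|\boldsymbol{c}\|_\infty$ before taking the supremum, this delivers the claimed bound.

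I expect the only real obstacle to be the bookkeeping in the first step: carefully re-indexing the composition $\mathcal{S}_{\boldsymbol{\alpha}}\mathcal{D}_{\boldsymbol{\gamma}}$ restricted to even outputs and recognizing that it is the even mask $\boldsymbol{\alpha}\downarrow 2$, rather than the full mask $\boldsymbol{\alpha}$, that convolves with $\boldsymbol{\gamma}$. Once the map is written as convolution by $\boldsymbol{\delta}-(\boldsymbol{\alpha}\downarrow 2)*\boldsymbol{\gamma}$ on $\boldsymbol{c}\downarrow 2$, the telescoping through $\widetilde{\boldsymbol{\alpha}}$ and the two convolution inequalities are routine Banach-algebra facts.
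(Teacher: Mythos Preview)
Your proof is correct and follows essentially the same approach as the paper: both telescope through $\widetilde{\boldsymbol{\alpha}}$ using the exact relation $(\widetilde{\boldsymbol{\alpha}}\downarrow 2)*\boldsymbol{\gamma}=\boldsymbol{\delta}$, and then bound by $\|\boldsymbol{\alpha}-\widetilde{\boldsymbol{\alpha}}\|_1\|\boldsymbol{\gamma}\|_1$. The only cosmetic difference is that you identify $\pi^{\boldsymbol{\alpha}}_{\boldsymbol{\gamma}}$ as convolution by $\boldsymbol{\delta}-(\boldsymbol{\alpha}\downarrow 2)*\boldsymbol{\gamma}$ on $\boldsymbol{c}\downarrow 2$ before telescoping, whereas the paper performs the add--subtract at the operator level, writing $\pi^{\boldsymbol{\alpha}}_{\boldsymbol{\gamma}}(\boldsymbol{c})=[(\mathcal{S}_{\widetilde{\boldsymbol{\alpha}}-\boldsymbol{\alpha}}\mathcal{D}_{\boldsymbol{\gamma}})\boldsymbol{c}]\downarrow 2$ and then estimating the resulting double sum directly.
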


\begin{proof}
    For any real-valued bounded sequence $\boldsymbol{c}$, we observe that
    \begin{align*}
        \pi^{\boldsymbol{\alpha}}_{\boldsymbol{\gamma}}(\boldsymbol{c}) & = [(\mathcal{I}-\mathcal{S}_{\boldsymbol{\alpha}} \mathcal{D}_{\boldsymbol{\gamma}})\boldsymbol{c}]\downarrow 2
        \\ & = [(\mathcal{I}-\mathcal{S}_{\widetilde{\boldsymbol{\alpha}}} \mathcal{D}_{\boldsymbol{\gamma}}+\mathcal{S}_{\widetilde{\boldsymbol{\alpha}}} \mathcal{D}_{\boldsymbol{\gamma}}-\mathcal{S}_{\boldsymbol{\alpha}} \mathcal{D}_{\boldsymbol{\gamma}})\boldsymbol{c}]\downarrow 2
        \\ & = \pi^{\widetilde{\boldsymbol{\alpha}}}_{\boldsymbol{\gamma}}(\boldsymbol{c}) + [(\mathcal{S}_{\widetilde{\boldsymbol{\alpha}}} \mathcal{D}_{\boldsymbol{\gamma}}-\mathcal{S}_{\boldsymbol{\alpha}} \mathcal{D}_{\boldsymbol{\gamma}})\boldsymbol{c}]\downarrow 2
        \\ & = [(\mathcal{S}_{\widetilde{\boldsymbol{\alpha}}-\boldsymbol{\alpha}} \mathcal{D}_{\boldsymbol{\gamma}})\boldsymbol{c}]\downarrow 2.
    \end{align*}
    The last equality is obtained by the fact that $\boldsymbol{\gamma}$ is the reverse of $\widetilde{\boldsymbol{\alpha}}\downarrow 2$, and by the linearity of the refinement~\eqref{linear_subdivision_scheme}. Now, by taking the $\ell_\infty$ norm we get
    \begin{align*}
        \|\pi^{\boldsymbol{\alpha}}_{\boldsymbol{\gamma}}(\boldsymbol{c})\|_\infty \leq \|\mathcal{S}_{\widetilde{\boldsymbol{\alpha}}-\boldsymbol{\alpha}} (\mathcal{D}_{\boldsymbol{\gamma}}\boldsymbol{c})\|_\infty 
        \leq \sup_{j\in\mathbb{Z}} \sum_{k}|\widetilde{\alpha}_{j-2k}-\alpha_{j-2k}|\sum_{n}|\gamma_{k-n}|\cdot |c_{2n}|\leq \|\widetilde{\boldsymbol{\alpha}}-\boldsymbol{\alpha}\|_1 \|\boldsymbol{\gamma}\|_1 \|\boldsymbol{c}\|_\infty.
    \end{align*}
    Eventually,
    \begin{equation*}
        \|\pi^{\boldsymbol{\alpha}}_{\boldsymbol{\gamma}}\|_{\text{op}} = \sup_{\boldsymbol{c}\neq\boldsymbol{0}}\frac{\|\pi^{\boldsymbol{\alpha}}_{\boldsymbol{\gamma}}(\boldsymbol{c})\|_\infty}{\|\boldsymbol{c}\|_\infty}\leq \|\widetilde{\boldsymbol{\alpha}}-\boldsymbol{\alpha}\|_1 \|\boldsymbol{\gamma}\|_1,
    \end{equation*}
    as required.
\end{proof}

Proposition~\ref{prop:even_details_error} offers the universal upper bound $\|\widetilde{\boldsymbol{\alpha}}-\boldsymbol{\alpha}\|_1 \|\boldsymbol{\gamma}\|_1$, which depends on $\xi$, for the even detail coefficients of~\eqref{linear_multiscale_analysis}. Indeed, there is a trade-off between the quantities $\|\widetilde{\boldsymbol{\alpha}}-\boldsymbol{\alpha}\|_1$ and $\|\boldsymbol{\gamma}\|_1$. In particular, if $\xi$ grows, then so does the perturbation $\|\widetilde{\boldsymbol{\alpha}}-\boldsymbol{\alpha}\|_1$, and according to Corollary~\ref{coro:kappa_of_pseudo} the norm $\|\boldsymbol{\gamma}\|_1$ gets smaller. We end this subsection with the following proposition concerning $\mathcal{S}_{\widetilde{\boldsymbol{\alpha}}}$.

\begin{prop}~\label{prop:even_regular_scheme_properties}
    If $\mathcal{S}_{\boldsymbol{\alpha}}$ is convergent, then so is $\mathcal{S}_{\widetilde{\boldsymbol{\alpha}}}$ for small values of $\xi$.
\end{prop}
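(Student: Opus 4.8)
The plan is to treat $\mathcal{S}_{\widetilde{\boldsymbol{\alpha}}}$ as a small perturbation of $\mathcal{S}_{\boldsymbol{\alpha}}$ and to exploit the fact that $C^0$-convergence of a subdivision scheme is governed by a \emph{strict} contractivity inequality on a finitely supported derived mask, a condition that is stable under sufficiently small perturbations. First I would record two facts about $\widetilde{\boldsymbol{\alpha}}$. (i) The masks agree on odd indices, while the even part $\widetilde{\boldsymbol{\alpha}}\downarrow 2$ consists of the coefficients of $(\boldsymbol{\alpha}\downarrow 2)^{-\dagger}_\xi$; since this polynomial keeps the off-circle zeros of $\boldsymbol{\alpha}\downarrow 2$ and merely replaces each unit-modulus zero $r$ by $(1+\xi)r$, its degree is unchanged and its coefficients (including the normalizing constant $C$, which stays well defined because no zero approaches $1$ for small $\xi$) are continuous functions of $\xi$ reducing to those of $\boldsymbol{\alpha}\downarrow 2$ at $\xi = 0$. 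Hence $\|\widetilde{\boldsymbol{\alpha}}-\boldsymbol{\alpha}\|_1\to 0$ as $\xi\to 0^+$. (ii) The normalization $p^{-\dagger}_\xi(1)=1$ forces $\sum_k\widetilde{\alpha}_{2k}=1$, and the odd part is untouched, so $\widetilde{\boldsymbol{\alpha}}$ satisfies the partition-of-unity condition~\eqref{eqn:partition_of_unity} for every $\xi>0$.

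Next I would invoke the standard characterization of convergence via the difference scheme (see~\cite{dyn1992subdivision}). Because~\eqref{eqn:partition_of_unity} holds, the symbol $\widetilde{a}(z)=\sum_k\widetilde{\alpha}_kz^k$ satisfies $\widetilde{a}(-1)=0$ exactly, so $(1+z)$ divides $\widetilde{a}$ and yields a finitely supported difference mask $\widetilde{\boldsymbol{b}}$ with $\widetilde{a}(z)=(1+z)\,\widetilde{q}(z)$. The scheme $\mathcal{S}_{\widetilde{\boldsymbol{\alpha}}}$ converges if and only if its difference scheme is contractive, that is, $\|\mathcal{S}_{\widetilde{\boldsymbol{b}}}^{\,L}\|_\infty<1$ for some finite $L$. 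The same construction applied to $\boldsymbol{\alpha}$ produces a difference mask $\boldsymbol{b}$, and the hypothesis that $\mathcal{S}_{\boldsymbol{\alpha}}$ converges fixes an integer $L$ with $\|\mathcal{S}_{\boldsymbol{b}}^{\,L}\|_\infty<1$.

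I would then close the argument by continuity. Extracting the difference mask amounts to polynomial division by the fixed factor $1+z$, which genuinely divides the dividend precisely because $\widetilde{a}(-1)=0$; the quotient coefficients are therefore continuous functions of the coefficients of $\widetilde{a}$, so $\widetilde{\boldsymbol{b}}\to\boldsymbol{b}$ in $\ell_1$ as $\xi\to0^+$. For the fixed $L$ above, the map $\boldsymbol{b}\mapsto\|\mathcal{S}_{\boldsymbol{b}}^{\,L}\|_\infty$ is continuous, since the $L$-fold composite mask is a finite convolution power of $\boldsymbol{b}$ and the norm is a maximum of finitely many finite sums of its coefficients. The strict inequality $\|\mathcal{S}_{\boldsymbol{b}}^{\,L}\|_\infty<1$ is thus preserved under sufficiently small perturbations, so there is $\xi_0>0$ with $\|\mathcal{S}_{\widetilde{\boldsymbol{b}}}^{\,L}\|_\infty<1$ for all $0<\xi<\xi_0$, whence $\mathcal{S}_{\widetilde{\boldsymbol{\alpha}}}$ converges.

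The hard part will be the bookkeeping around the integer $L$: convergence only supplies \emph{some} finite contraction length, and the norm $\|\mathcal{S}_{\boldsymbol{b}}^{\,L}\|_\infty$ need not vary continuously if $L$ is allowed to change, so I would fix one witnessing $L$ for $\boldsymbol{b}$ and argue continuity only for that single exponent. The other point requiring care is that $(1+z)$ must \emph{exactly} divide $\widetilde{a}$ (not merely approximately), which keeps $\widetilde{\boldsymbol{b}}$ finitely supported and makes the division map continuous; this is exactly what the preserved partition-of-unity identity $\widetilde{a}(-1)=0$ guarantees.
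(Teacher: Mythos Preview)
Your proof is correct and follows essentially the same approach as the paper: both verify the partition-of-unity condition for $\widetilde{\boldsymbol{\alpha}}$, pass to the derived (difference) scheme via division of the symbol by $1+z$, and use continuity of the mask coefficients in $\xi$ to preserve the strict contractivity inequality $\|\mathcal{S}_{\boldsymbol{b}}^{L}\|_\infty<1$ for a fixed exponent. Your write-up is in fact more careful than the paper's, explicitly addressing why the division by $1+z$ is exact and why the exponent $L$ must be held fixed for the continuity argument.
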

\begin{proof}
    First, we note that $\widetilde{\boldsymbol{\alpha}}$ satisfies the necessary condition for convergence~\eqref{eqn:partition_of_unity}. The reason being is that the odd coefficients of the mask $\widetilde{\boldsymbol{\alpha}}$ are equal to the odd coefficients of $\boldsymbol{\alpha}$, while the even coefficients of $\widetilde{\boldsymbol{\alpha}}$ sum to $1$ due to the constant $C$ appearing in~\eqref{eqn:pseudo_reverse}. As for convergence, we refer to Theorem 3.2 in~\cite{dyn1992subdivision}. Since $\mathcal{S}_{\boldsymbol{\alpha}}$ is convergent, then the refinement rule $\mathcal{S}_{\boldsymbol{\beta}}$, where the mask $\boldsymbol{\beta}$ is determined by $\boldsymbol{\beta}(z)=z(1+z)^{-1}\boldsymbol{\alpha}(z)$, is contractive. In particular, there exists $N\in\mathbb{N}$ such that $\|\mathcal{S}_{\boldsymbol{\beta}}^N\|_\infty < 1$, while the $\infty$-norm of $\mathcal{S}_{\boldsymbol{\beta}}$ is calculated via
    \begin{equation}~\label{eqn:subdivision_norm}
        \|\mathcal{S}_{\boldsymbol{\beta}}\|_{\infty} = \max\big\{ \sum_{k\in\mathbb{Z}}|\beta_{2k}|, \; \sum_{k\in\mathbb{Z}}|\beta_{2k+1}|\big\}.
    \end{equation}

    Now let $\widetilde{\boldsymbol{\beta}}(z)=z(1+z)^{-1}\widetilde{\boldsymbol{\alpha}}(z)$ for some $\xi>0$. Because the coefficients of $\widetilde{\boldsymbol{\alpha}}$ are continuous with respect to the parameter $\xi$, see~\eqref{eqn:pseudo_reverse}, then for small values of $\xi$ the contractivity of $\mathcal{S}_{\widetilde{\boldsymbol{\beta}}}$ is guaranteed. This implies that the subdivision scheme $\mathcal{S}_{\widetilde{\boldsymbol{\alpha}}}$ is convergent for small values of $\xi$.
\end{proof}

\subsection{Analytical properties}

In multiscale analysis, and in time-frequency analysis in general, one usually wants to have a stable and perfect reconstruction. This is useful for many numerical tasks, since, we typically manipulate the detail coefficients and then reconstruct using the inverse transform. Perfect reconstruction and stability guarantee the validity of such algorithms.

In the context of our multiscale transform~\eqref{linear_multiscale_analysis} and its inverse, perfect reconstruction means the ability to set half of the detail coefficients of each layer to zero, without losing any information after the synthesis. This property is beneficial for \emph{data compression} since we can avoid storing half of the information. Therefore, half of the detail coefficients of each layer have to exhibit statistical redundancy.

To provide a more precise bound on the detail coefficients on the even indices, recall that, since $\widetilde{\boldsymbol{\alpha}}$ and $\boldsymbol{\gamma}$ satisfy~\eqref{Z_transform}, then using the pair $\mathcal{S}_{\widetilde{\boldsymbol{\alpha}}}$ and $\mathcal{D}_{\boldsymbol{\gamma}}$ in the multiscale transform~\eqref{linear_multiscale_analysis} results in zero detail coefficients on the even indices. The next lemma provides an upper bound to the error on the even detail coefficients when using $\mathcal{S}_{\boldsymbol{\alpha}}$ and $ \mathcal{D}_{\boldsymbol{\gamma}}$ in~\eqref{linear_multiscale_analysis}. To this end, we introduce the operator $\Delta$, which acts on sequences and computes the maximal consecutive difference. In particular, $\Delta \boldsymbol{c}= \sup_{j\in\mathbb{Z}}|c_{j+1} - c_{j}|$ for any real sequence $\boldsymbol{c}$.

\begin{lemma}~\label{lem:details}
    Let $\boldsymbol{c}^{(J)}$ be a real-valued sequence, and let $\mathcal{S}_{\boldsymbol{\alpha}}$ be a refinement operator~\eqref{linear_subdivision_scheme} with a pseudo-reverse $\mathcal{D}_{\boldsymbol{\gamma}}$ for some $\xi>0$, that is, $\boldsymbol{\gamma}=(\widetilde{\boldsymbol{\alpha}}\downarrow2)^{-1}$, see Definition~\ref{defin:pseudo-reversing_operators}. Denote by $\{\boldsymbol{c}^{(0)};\boldsymbol{d}^{(1)},\dots,\boldsymbol{d}^{(J)}\}$ the multiscale representation~\eqref{linear_multiscale_analysis} of $\boldsymbol{c}^{(J)}$ using $\mathcal{S}_{\boldsymbol{\alpha}}$ and $ \mathcal{D}_{\boldsymbol{\gamma}}$. Then
    \begin{equation}~\label{linear_details_bound}
        \|\boldsymbol{d}^{(\ell)}\downarrow 2\|_\infty \leq L(\xi)\Delta\boldsymbol{c}^{(\ell)},
    \end{equation}
    for any $\ell=1,\dots,J$, where
    \begin{equation}~\label{eqn:constant_L}
        L(\xi) = \|\boldsymbol{\alpha} -\widetilde{\boldsymbol{\alpha}}\|_1K_{\boldsymbol{\gamma}} + \|\boldsymbol{\gamma}\|_1 K_{\boldsymbol{\alpha}-\widetilde{\boldsymbol{\alpha}}},
    \end{equation}
    and $K_{\boldsymbol{c}}:=2\sum_{j\in\mathbb{Z}}|c_j|\cdot|j|$.
\end{lemma}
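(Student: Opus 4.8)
The plan is to reduce the even detail coefficients to a single \emph{zero-sum} convolution of $\boldsymbol{c}^{(\ell)}$ and then exploit that zero-sum structure to trade the crude $\ell_\infty$ bound of Proposition~\ref{prop:even_details_error} for a bound in terms of consecutive differences. First I would recall from the analysis recursion~\eqref{linear_multiscale_analysis} that $\boldsymbol{d}^{(\ell)}=(\mathcal I-\mathcal S_{\boldsymbol\alpha}\mathcal D_{\boldsymbol\gamma})\boldsymbol{c}^{(\ell)}$, so that $\boldsymbol{d}^{(\ell)}\downarrow2=\pi^{\boldsymbol\alpha}_{\boldsymbol\gamma}(\boldsymbol{c}^{(\ell)})$. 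Using the telescoping identity already established in the proof of Proposition~\ref{prop:even_details_error}, namely $\pi^{\boldsymbol\alpha}_{\boldsymbol\gamma}(\boldsymbol{c})=[\mathcal S_{\widetilde{\boldsymbol\alpha}-\boldsymbol\alpha}\mathcal D_{\boldsymbol\gamma}\boldsymbol{c}]\downarrow2$, I would write out the two convolutions~\eqref{two_subdivision_rules} and~\eqref{linear_decimation} explicitly and collapse them into a single convolution acting on the even samples of $\boldsymbol{c}^{(\ell)}$: a short reindexing gives $(\boldsymbol{d}^{(\ell)}\downarrow2)_j=\sum_{q}\nu_q\,c^{(\ell)}_{2(j-q)}$, where $\boldsymbol\nu=\bigl((\widetilde{\boldsymbol\alpha}-\boldsymbol\alpha)\downarrow2\bigr)\ast\boldsymbol\gamma$.

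The decisive structural fact is that $\boldsymbol\nu$ sums to zero. Both $\boldsymbol\alpha$ and $\widetilde{\boldsymbol\alpha}$ satisfy the partition-of-unity condition~\eqref{eqn:partition_of_unity} --- the latter was verified in the proof of Proposition~\ref{prop:even_regular_scheme_properties} --- so $\sum_k(\widetilde\alpha-\alpha)_{2k}=0$, while $\sum_k\gamma_k=\boldsymbol\gamma(1)=1$ because $\boldsymbol\gamma$ reverses $\widetilde{\boldsymbol\alpha}\downarrow2$; hence $\sum_q\nu_q=0$. With this in hand I would subtract the constant $c^{(\ell)}_{2j}$ inside the sum (legitimate since the weights are zero-sum) and bound each term by a telescope of consecutive differences, $|c^{(\ell)}_{2(j-q)}-c^{(\ell)}_{2j}|\le 2|q|\,\Delta\boldsymbol{c}^{(\ell)}$, which yields $\|\boldsymbol{d}^{(\ell)}\downarrow2\|_\infty\le \bigl(2\sum_q|\nu_q|\,|q|\bigr)\Delta\boldsymbol{c}^{(\ell)}=K_{\boldsymbol\nu}\,\Delta\boldsymbol{c}^{(\ell)}$.

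It then remains to dominate $K_{\boldsymbol\nu}$ by $L(\xi)$. I would establish the Leibniz-type inequality $K_{\boldsymbol{u}\ast\boldsymbol{v}}\le K_{\boldsymbol u}\|\boldsymbol v\|_1+\|\boldsymbol u\|_1 K_{\boldsymbol v}$ for the weighted first-moment functional $K$, which follows from $|q|\le|k|+|q-k|$ applied inside the convolution together with a reindexing of the two resulting sums. Taking $\boldsymbol u=(\widetilde{\boldsymbol\alpha}-\boldsymbol\alpha)\downarrow2$ and $\boldsymbol v=\boldsymbol\gamma$, and noting that $\boldsymbol\alpha-\widetilde{\boldsymbol\alpha}$ is supported on even indices (its odd coefficients coincide with those of $\boldsymbol\alpha$), I get $\|\boldsymbol u\|_1=\|\boldsymbol\alpha-\widetilde{\boldsymbol\alpha}\|_1$ and $K_{\boldsymbol u}=\tfrac12 K_{\boldsymbol\alpha-\widetilde{\boldsymbol\alpha}}\le K_{\boldsymbol\alpha-\widetilde{\boldsymbol\alpha}}$; substituting gives $K_{\boldsymbol\nu}\le \|\boldsymbol\alpha-\widetilde{\boldsymbol\alpha}\|_1 K_{\boldsymbol\gamma}+\|\boldsymbol\gamma\|_1 K_{\boldsymbol\alpha-\widetilde{\boldsymbol\alpha}}=L(\xi)$, as claimed.

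The main obstacle is not any single estimate but securing the zero-sum property $\sum_q\nu_q=0$: this is precisely what upgrades the $\|\boldsymbol{c}^{(\ell)}\|_\infty$ bound of Proposition~\ref{prop:even_details_error} to the far sharper $\Delta\boldsymbol{c}^{(\ell)}$ bound, and it hinges on $\widetilde{\boldsymbol\alpha}$ inheriting the partition-of-unity normalization through the constant $C(p^\dagger_\xi)$ of pseudo-reversing. A secondary technical point to handle with care is the factor of $2$ created by sampling on the even grid $2(j-q)$, which is responsible for the factor $2$ in the definition of $K$. Finally, I would note that all the constants are finite: $\boldsymbol\alpha$ and $\widetilde{\boldsymbol\alpha}$ are finitely supported, and by Proposition~\ref{prop:decay_of_reverse} the coefficients of $\boldsymbol\gamma$ decay exponentially, so $\|\boldsymbol\gamma\|_1$ and $K_{\boldsymbol\gamma}$ converge and $L(\xi)<\infty$.
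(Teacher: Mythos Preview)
Your proof is correct and follows essentially the same route as the paper: both express $d^{(\ell)}_{2j}$ through the difference mask $(\boldsymbol{\alpha}-\widetilde{\boldsymbol{\alpha}})\downarrow2$ convolved with $\boldsymbol{\gamma}$, subtract $c^{(\ell)}_{2j}$ using the zero-sum/partition-of-unity property, telescope $|c^{(\ell)}_{2j}-c^{(\ell)}_{2n}|\le |2j-2n|\,\Delta\boldsymbol{c}^{(\ell)}$, and split the index via the triangle inequality $|q|\le|k|+|q-k|$. Your packaging via the single kernel $\boldsymbol{\nu}$ and the Leibniz-type bound $K_{\boldsymbol{u}*\boldsymbol{v}}\le K_{\boldsymbol{u}}\|\boldsymbol{v}\|_1+\|\boldsymbol{u}\|_1K_{\boldsymbol{v}}$ is a slightly cleaner abstraction of the paper's explicit double-sum manipulation (the paper subtracts the auxiliary quantity $q^{(\ell)}_{2j}=0$ rather than invoking $\sum_q\nu_q=0$ directly), but the estimates are line-for-line equivalent.
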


\begin{proof}
    We first note that $K_{\boldsymbol{\gamma}}$ is certainly finite for any $\xi>0$ due to Proposition~\ref{prop:decay_of_reverse}. The finiteness of $K_{\boldsymbol{\alpha}-\widetilde{\boldsymbol{\alpha}}}$ is guaranteed since $\boldsymbol{\alpha}$ is compactly supported. Now, we explicitly calculate a general term of $\boldsymbol{d}^{(\ell)}$ that is associated with an even index. For $\ell=1,\dots,J$ and $j\in\mathbb{Z}$ we have
    \begin{align*}
        d^{(\ell)}_{2j} & = c^{(\ell)}_{2j} - \sum_{k}\alpha_{2k} c_{j-k}^{(\ell-1)}  = \sum_{k} \alpha_{2k} \big(c_{2j}^{(\ell)} - (\mathcal{D}_{\boldsymbol{\gamma}}\boldsymbol{c}^{(\ell)})_{j-k}\big)\\ & = \sum_{k} \alpha_{2k} \big(c_{2j}^{(\ell)} - \sum_{n}\gamma_{j-k-n}c^{(\ell)}_{2n}\big) = \sum_{k} \alpha_{2k} \sum_{n}\gamma_{j-k-n}(c_{2j}^{(\ell)} - c^{(\ell)}_{2n}).
    \end{align*}
    Now, we denote by $\{\boldsymbol{c}^{(0)};\boldsymbol{q}^{(1)},\dots,\boldsymbol{q}^{(J)}\}$ the multiscale representation~\eqref{linear_multiscale_analysis} of $\boldsymbol{c}^{(J)}$ using $\mathcal{S}_{\widetilde{\boldsymbol{\alpha}}}$ and $\mathcal{D}_{\boldsymbol{\gamma}}$, and rely on the fact that the resulting detail coefficients satisfy $q^{(\ell)}_{2j}=0$ for all $\ell=1,\dots,J$ and $j\in\mathbb{Z}$ since $\widetilde{\boldsymbol{\alpha}}$ and $\boldsymbol{\gamma}$ solve~\eqref{convolutional_equation}. On the other hand, direct calculations of $q^{(\ell)}_{2j}$ similar to the above evaluations yield
    \begin{align*}
        q^{(\ell)}_{2j} & = \sum_{k} \widetilde{\alpha}_{2k} \sum_{n}\gamma_{j-k-n}(c_{2j}^{(\ell)} - c^{(\ell)}_{2n})=0.
    \end{align*}
    Consequently,
    \begin{align*}
        |d^{(\ell)}_{2j}| = |d^{(\ell)}_{2j}-q^{(\ell)}_{2j}| & \leq \sum_{k} |\alpha_{2k}-\widetilde{\alpha}_{2k}| \sum_{n}|\gamma_{j-k-n}|\big|c_{2j}^{(\ell)} - c^{(\ell)}_{2n}\big|
        \\ & \leq \sum_{k} |\alpha_{2k}-\widetilde{\alpha}_{2k}| \bigg(\sum_{n}|\gamma_{j-k-n}||2j - 2n|\bigg)\Delta\boldsymbol{c}^{(\ell)}
        \\ & \leq \sum_{k} |\alpha_{2k}-\widetilde{\alpha}_{2k}| \bigg(\sum_{n}|\gamma_{j-k-n}|\big(|2j - 2k -2n|+|2k|\big)\bigg)\Delta\boldsymbol{c}^{(\ell)}
        \\ & \leq \sum_{k} |\alpha_{2k}-\widetilde{\alpha}_{2k}| \bigg(K_{\boldsymbol{\gamma}}+\|\boldsymbol{\gamma}\|_1\cdot|2k|\big)\bigg)\Delta\boldsymbol{c}^{(\ell)}
        \\ & \leq \big(\|\boldsymbol{\alpha} -\widetilde{\boldsymbol{\alpha}}\|_1K_{\boldsymbol{\gamma}} + \|\boldsymbol{\gamma}\|_1 K_{\boldsymbol{\alpha}-\widetilde{\boldsymbol{\alpha}}}\big)\Delta\boldsymbol{c}^{(\ell)}.
    \end{align*}
\end{proof}

In contrast to Proposition~\ref{prop:even_details_error}, Lemma~\ref{lem:details} offers an upper bound on the even detail coefficients which depends on the analyzed sequence $\boldsymbol{c}^{(J)}$. Although the constant $L(\xi)$ of~\eqref{eqn:constant_L} involves two terms that exhibit an indeterminate form of type $0\cdot\infty$ as $\xi$ approaches $0^+$, numerical tests show that the constant is of reasonable magnitude even for small values of $\xi$. Moreover, in practice, $\xi$ is usually chosen large enough to ensure good decay in $\boldsymbol{\gamma}$, resulting in justifiable values for $\|\boldsymbol{\gamma}\|_1$ and $K_{\boldsymbol{\gamma}}$ that appear in $L(\xi)$. Moreover, if $\boldsymbol{\alpha}$ is reversible and does not require pseudo-reversing, then $L(\xi)$ vanishes for all $\xi$, resulting in zero even detail coefficients as expected.

Lemma~\ref{lem:details} leads to the following insight. The even detail coefficients generated by analyzing sequences $\boldsymbol{c}^{(J)}$ with a large value of $\Delta\boldsymbol{c}^{(J)}$ are prone to storing large values. For example, when $\boldsymbol{c}^{(J)}$ is contaminated with noise. In this case, the multiscale transform becomes less beneficial because the representation stores more data than the original input. On the other hand, if $\boldsymbol{c}^{(J)}$ is assumed to be sampled from a differentiable function, then the even detail coefficients must exhibit a decay with respect to the scale, as the following theorem shows.

\begin{theorem}~\label{thm:details_difference}
    Let $\boldsymbol{c}^{(J)}$ be a real-valued sequence sampled from a differentiable function $f:\mathbb{R}\rightarrow\mathbb{R}$, with a bounded derivative, over the equispaced grid $2^{-J}\mathbb{Z}$. Let $\mathcal{S}_{\boldsymbol{\alpha}}$ be a refinement operator~\eqref{linear_subdivision_scheme} with a pseudo-reverse $\mathcal{D}_{\boldsymbol{\gamma}}$ for some $\xi>0$. Denote by $\{\boldsymbol{c}^{(0)};\boldsymbol{d}^{(1)},\dots,\boldsymbol{d}^{(J)}\}$ the multiscale representation~\eqref{linear_multiscale_analysis} of $\boldsymbol{c}^{(J)}$ using $\mathcal{S}_{\boldsymbol{\alpha}}$ and $ \mathcal{D}_{\boldsymbol{\gamma}}$. Then
    \begin{equation}~\label{differentiable_details_bound}
        \|\boldsymbol{d}^{(\ell)}\downarrow 2\|_\infty \leq P(\xi)(2\|\boldsymbol{\gamma}\|_1)^{-\ell},
    \end{equation}
    for any $\ell=1,\dots,J$, where 
    \begin{equation}~\label{eqn:constant_P}
        P(\xi) = L(\xi)\|\boldsymbol{\gamma}\|^J_1\|f^\prime\|_\infty,
    \end{equation}
    and $L(\xi)$ is the constant of~\eqref{eqn:constant_L}.
\end{theorem}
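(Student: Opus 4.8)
The plan is to reduce the statement to the sequence-dependent estimate already supplied by Lemma~\ref{lem:details} and then control the geometric quantity $\Delta\boldsymbol{c}^{(\ell)}$ recursively across scales. Since Lemma~\ref{lem:details} gives $\|\boldsymbol{d}^{(\ell)}\downarrow 2\|_\infty \leq L(\xi)\Delta\boldsymbol{c}^{(\ell)}$, the entire task collapses into bounding $\Delta\boldsymbol{c}^{(\ell)}$ in terms of the scale $\ell$, the norm $\|\boldsymbol{\gamma}\|_1$, and the data $f$. Throughout, I would use that $\|\boldsymbol{\gamma}\|_1<\infty$, which is guaranteed by the exponential decay of Proposition~\ref{prop:decay_of_reverse}.

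First, I would anchor the estimate at the finest scale. As $c^{(J)}_k = f(k\,2^{-J})$ on the grid $2^{-J}\mathbb{Z}$, the mean value theorem yields $|c^{(J)}_{k+1} - c^{(J)}_k| = |f'(\eta_k)|\,2^{-J} \leq \|f'\|_\infty\,2^{-J}$ for some intermediate $\eta_k$, whence $\Delta\boldsymbol{c}^{(J)}\leq \|f'\|_\infty\,2^{-J}$. The hypothesis that $f'$ is bounded is precisely what makes $\|f'\|_\infty$ finite and this anchor meaningful.

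Second, and this is the crux, I would derive a one-step recursion $\Delta\boldsymbol{c}^{(\ell-1)} \leq 2\|\boldsymbol{\gamma}\|_1\,\Delta\boldsymbol{c}^{(\ell)}$. Writing $\boldsymbol{c}^{(\ell-1)} = \mathcal{D}_{\boldsymbol{\gamma}}\boldsymbol{c}^{(\ell)}$ via~\eqref{linear_decimation} and shifting the summation index by one in the expression for $c^{(\ell-1)}_{j+1}$, the consecutive difference telescopes to $c^{(\ell-1)}_{j+1}-c^{(\ell-1)}_j = \sum_m \gamma_{j-m}\big(c^{(\ell)}_{2m+2}-c^{(\ell)}_{2m}\big)$. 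Bounding $|c^{(\ell)}_{2m+2}-c^{(\ell)}_{2m}|\leq 2\Delta\boldsymbol{c}^{(\ell)}$ by the triangle inequality through the intermediate odd index, and summing $|\gamma_{j-m}|$ to $\|\boldsymbol{\gamma}\|_1$, then gives the recursion after taking the supremum over $j$. The only step requiring care is the index shift, which is exactly what lets the difference collapse into differences of $\boldsymbol{c}^{(\ell)}$ at spacing two.

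Finally, I would iterate the recursion $J-\ell$ times down from the anchor at scale $J$ to obtain $\Delta\boldsymbol{c}^{(\ell)} \leq (2\|\boldsymbol{\gamma}\|_1)^{J-\ell}\Delta\boldsymbol{c}^{(J)} \leq (2\|\boldsymbol{\gamma}\|_1)^{J-\ell}\|f'\|_\infty\,2^{-J}$, substitute into Lemma~\ref{lem:details}, and reorganize the powers. Since $(2\|\boldsymbol{\gamma}\|_1)^{J-\ell}2^{-J} = \|\boldsymbol{\gamma}\|_1^{J}(2\|\boldsymbol{\gamma}\|_1)^{-\ell}$, the constant $L(\xi)\|f'\|_\infty\|\boldsymbol{\gamma}\|_1^{J}$ emerges, matching $P(\xi)$ of~\eqref{eqn:constant_P} exactly. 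The main obstacle is nothing deep but rather getting the index shift in the recursion right and keeping the bookkeeping of the exponents consistent in this final algebraic simplification; everything else is a direct appeal to Lemma~\ref{lem:details}, the mean value theorem, and the finiteness of $\|\boldsymbol{\gamma}\|_1$.
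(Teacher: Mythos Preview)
Your proposal is correct and follows essentially the same route as the paper: invoke Lemma~\ref{lem:details}, anchor $\Delta\boldsymbol{c}^{(J)}\le 2^{-J}\|f'\|_\infty$ via the mean value theorem, establish the one-step recursion $\Delta\boldsymbol{c}^{(\ell-1)}\le 2\|\boldsymbol{\gamma}\|_1\,\Delta\boldsymbol{c}^{(\ell)}$, iterate, and regroup the powers. The only cosmetic difference is that the paper phrases the recursion as ``$\mathcal{D}_{\boldsymbol{\gamma}}\boldsymbol{c}=\boldsymbol{\gamma}*(\boldsymbol{c}\downarrow2)$ and convolution commutes with $\Delta$,'' whereas you write out the index shift explicitly; the two are the same computation.
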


\begin{proof}
    Since $f$ is differentiable and bounded, then by the mean value theorem, for all $j\in\mathbb{Z}$ and a fixed $J\in\mathbb{N}$, there exists $x_j$ in the open segment connecting the parametrizations of $c^{(J)}_j$ and $c^{(J)}_{j+1}$, such that 
    $$|c^{(J)}_{j+1}-c^{(J)}_{j}| = 2^{-J}|f^\prime (x_j)|.$$
    Taking the $\sup_{j\in\mathbb{Z}}$ over both sides gives the estimation $\Delta \boldsymbol{c}^{(J)} \leq 2^{-J}\|f^\prime\|_\infty$. Now, note that the decimation operator~\eqref{linear_decimation} can be written as $\mathcal{D}_{\boldsymbol{\gamma}}\boldsymbol{c} = \boldsymbol{\gamma} * (\boldsymbol{c}\downarrow 2)$ for any real-valued sequence $\boldsymbol{c}$. Moreover,
    since the convolution $*$ commutes with $\Delta$ we get
    \begin{align*}
        \Delta\boldsymbol{c}^{(\ell-1)} = \Delta(\boldsymbol{\gamma}*(\boldsymbol{c}^{(\ell)}\downarrow 2)) \leq \|\boldsymbol{\gamma}\|_1\Delta(\boldsymbol{c}^{(\ell)}\downarrow 2)\leq 2 \|\boldsymbol{\gamma}\|_1\Delta\boldsymbol{c}^{(\ell)}.
    \end{align*}
    Iterating the latter inequality $J-\ell$ many times we get
    $$\Delta\boldsymbol{c}^{(\ell)} \leq (2 \|\boldsymbol{\gamma}\|_1)^{J-\ell}\Delta\boldsymbol{c}^{(J)}\leq \|\boldsymbol{\gamma}\|_1^J(2\|\boldsymbol{\gamma}\|_1)^{-\ell}\|f^\prime\|_\infty.$$
    This estimation together with~\eqref{linear_details_bound} yields the required result by taking $P(\xi)=L(\xi)\|\boldsymbol{\gamma}\|^J_1\|f^\prime\|_\infty$.
\end{proof}

Theorem~\ref{thm:details_difference} implies that the detrimental effect of using $\mathcal{S}_{\boldsymbol{\alpha}}$ and its pseudo-reverse $\mathcal{D}_{\boldsymbol{\gamma}}$ in~\eqref{linear_multiscale_analysis} is more pronounced on coarse scales, as the upper bound of~\eqref{differentiable_details_bound} becomes more loose when $\ell$ decreases. This is true, however, only when analyzing sequences that are sampled from differentiable curves. Moreover, note that the bound becomes more loose as $\|f^\prime\|_\infty$ of~\eqref{eqn:constant_P} increases. In other words, multiscaling with a refinement rule and its pseudo-reverse is more advantageous when the input sequence does not have rapid changes.

In the following theorem we analyze the reconstruction error, that is, the difference between the synthesized pyramid after discarding the even detail coefficients.

\begin{theorem}~\label{thm:synthesis_theorem}
    Let $\boldsymbol{c}^{(J)}$ be a real-valued sequence, and let $\mathcal{S}_{\boldsymbol{\alpha}}$ be a refinement operator~\eqref{linear_subdivision_scheme} with a pseudo-reverse $\mathcal{D}_{\boldsymbol{\gamma}}$ for some $\xi>0$. Denote by $\{\boldsymbol{c}^{(0)};\boldsymbol{d}^{(1)},\dots,\boldsymbol{d}^{(J)}\}$ the multiscale representation~\eqref{linear_multiscale_analysis} of $\boldsymbol{c}^{(J)}$ using $\mathcal{S}_{\boldsymbol{\alpha}}$ and $ \mathcal{D}_{\boldsymbol{\gamma}}$. Then, there exists $C>0$ such that
    \begin{equation}~\label{synthesis_bound}
        \|\boldsymbol{c}^{(J)} - \boldsymbol{\zeta}^{(J)}\|_\infty \leq C\sum_{\ell=1}^{J}\|\boldsymbol{d}^{(\ell)}\downarrow 2\|_\infty,
    \end{equation}
    where $\boldsymbol{\zeta}^{(J)}$ is the synthesized sequence after setting the even detail coefficients of the multiscale representation to zero. In other words, the reconstruction error is proportional to the cumulative errors studied in~\eqref{linear_details_bound} and~\eqref{differentiable_details_bound}.
\end{theorem}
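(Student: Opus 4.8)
The plan is to trace how the discarded even details propagate through the synthesis recursion~\eqref{linear_multiscale_synthesis} and to accumulate their contributions using the operator norm of $\mathcal{S}_{\boldsymbol{\alpha}}$. First I would let $\widetilde{\boldsymbol{d}}^{(\ell)}$ denote the detail sequence obtained from $\boldsymbol{d}^{(\ell)}$ by zeroing every even-indexed entry, so that $\widetilde{d}^{(\ell)}_{2j}=0$ and $\widetilde{d}^{(\ell)}_{2j+1}=d^{(\ell)}_{2j+1}$, and define $\boldsymbol{\zeta}^{(\ell)}$ through the synthesis recursion $\boldsymbol{\zeta}^{(\ell)}=\mathcal{S}_{\boldsymbol{\alpha}}\boldsymbol{\zeta}^{(\ell-1)}+\widetilde{\boldsymbol{d}}^{(\ell)}$ with the common coarse initialization $\boldsymbol{\zeta}^{(0)}=\boldsymbol{c}^{(0)}$. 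Since synthesizing with the full details reconstructs $\boldsymbol{c}^{(\ell)}$ exactly, it then suffices to control the error sequence $\boldsymbol{e}^{(\ell)}:=\boldsymbol{c}^{(\ell)}-\boldsymbol{\zeta}^{(\ell)}$, which starts at $\boldsymbol{e}^{(0)}=\boldsymbol{0}$.

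Next I would subtract the two synthesis recursions to obtain the first-order recursion
\begin{equation*}
\boldsymbol{e}^{(\ell)}=\mathcal{S}_{\boldsymbol{\alpha}}\boldsymbol{e}^{(\ell-1)}+\big(\boldsymbol{d}^{(\ell)}-\widetilde{\boldsymbol{d}}^{(\ell)}\big),\qquad \ell=1,\dots,J.
\end{equation*}
The crucial observation is that the perturbation $\boldsymbol{d}^{(\ell)}-\widetilde{\boldsymbol{d}}^{(\ell)}$ is supported on the even indices and coincides there with $\boldsymbol{d}^{(\ell)}\downarrow 2$, so that $\|\boldsymbol{d}^{(\ell)}-\widetilde{\boldsymbol{d}}^{(\ell)}\|_\infty=\|\boldsymbol{d}^{(\ell)}\downarrow 2\|_\infty$. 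Taking $\ell_\infty$ norms and using that $\mathcal{S}_{\boldsymbol{\alpha}}$ is bounded on $\ell_\infty(\mathbb{Z})$ with finite operator norm $S:=\|\mathcal{S}_{\boldsymbol{\alpha}}\|_\infty$, computed as in~\eqref{eqn:subdivision_norm} and finite by the compact support of $\boldsymbol{\alpha}$, I would arrive at $\|\boldsymbol{e}^{(\ell)}\|_\infty\le S\|\boldsymbol{e}^{(\ell-1)}\|_\infty+\|\boldsymbol{d}^{(\ell)}\downarrow 2\|_\infty$.

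Finally, unrolling this inequality by induction from $\boldsymbol{e}^{(0)}=\boldsymbol{0}$ up to level $J$ gives
\begin{equation*}
\|\boldsymbol{c}^{(J)}-\boldsymbol{\zeta}^{(J)}\|_\infty=\|\boldsymbol{e}^{(J)}\|_\infty\le \sum_{\ell=1}^{J}S^{\,J-\ell}\,\|\boldsymbol{d}^{(\ell)}\downarrow 2\|_\infty,
\end{equation*}
after which I would bound every geometric weight by its maximum and set $C:=\max\{1,S^{\,J-1}\}$ to conclude~\eqref{synthesis_bound}. I expect the only delicate point to be the case $S>1$, where the weights $S^{\,J-\ell}$ grow toward the coarse scales so that no single constant independent of $J$ is available; this is not a genuine obstacle, however, since the theorem permits $C$ to depend on $J$, $\boldsymbol{\alpha}$ and $\xi$, and absorbing the worst-case factor $S^{\,J-1}$ is legitimate while keeping $C$ independent of the analyzed sequence, which is precisely what stability requires. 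Combining the resulting estimate with Lemma~\ref{lem:details} and Theorem~\ref{thm:details_difference} then displays the reconstruction error as proportional to the cumulative even-detail errors of~\eqref{linear_details_bound} and~\eqref{differentiable_details_bound}, as asserted.
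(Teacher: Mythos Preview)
Your argument is correct and follows essentially the same route as the paper: both set up the error recursion, observe that the perturbation at level $\ell$ has $\ell_\infty$ norm equal to $\|\boldsymbol{d}^{(\ell)}\downarrow 2\|_\infty$, unroll to a sum weighted by iterated refinement norms, and absorb those weights into a single constant.

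The one point worth noting is how the constant is obtained. You use submultiplicativity, bounding $\|\mathcal{S}_{\boldsymbol{\alpha}}^{J-\ell}\|_\infty\le S^{\,J-\ell}$ and taking $C=\max\{1,S^{\,J-1}\}$, which depends on $J$. The paper instead bounds $\|\mathcal{S}_{\boldsymbol{\alpha}}^{J-\ell}\|_\infty$ directly and invokes the uniform boundedness principle (using that $\mathcal{S}_{\boldsymbol{\alpha}}$ is a convergent scheme, so its iterates are pointwise bounded) to obtain $\sup_{j\in\mathbb{N}}\|\mathcal{S}_{\boldsymbol{\alpha}}^j\|_\infty\le C$ with $C$ independent of $J$. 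Since the theorem as stated only asks for the existence of some $C>0$, your version is adequate; the paper's argument buys a slightly sharper, $J$-independent constant at the cost of appealing to convergence of the scheme.
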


\begin{proof}
    We first note that $\boldsymbol{c}^{(J)}$ can be fully recovered via~\eqref{linear_multiscale_synthesis} when considering \emph{all} the detail coefficients of its pyramid representation. In other words, the iterations~\eqref{linear_multiscale_analysis} and~\eqref{linear_multiscale_synthesis} are algebraically compatible. In particular,
    \begin{equation*}
        \boldsymbol{c}^{(J)}=\mathcal{S}_{\boldsymbol{\alpha}}^{J}\boldsymbol{c}^{(0)} + \sum_{\ell=1}^J\mathcal{S}_{\boldsymbol{\alpha}}^{J-\ell}\boldsymbol{d}^{(\ell)},
    \end{equation*}
    where $\mathcal{S}_{\boldsymbol{\alpha}}^0=\mathcal{I}$ is the identity operator.

    Now, we consider the pyramid $\{\boldsymbol{c}^{(0)};\boldsymbol{q}^{(1)},\dots,\boldsymbol{q}^{(J)}\}$ where
    \begin{equation*}
        q^{(\ell)}_{j}=\begin{cases}
            d^{(\ell)}_j, & j=2n+1, \\
            0, & j=2n,
            \end{cases} \quad \quad\ell=1,\dots, J, \quad j\in\mathbb{Z},
    \end{equation*}
    and study its synthesis via~\eqref{linear_multiscale_synthesis} using $\mathcal{S}_{\boldsymbol{\alpha}}$. Our objective now is to quantify the distance (induced by the $\ell_\infty$ norm) between the reconstruction and the original input $\boldsymbol{c}^{(J)}$.

    To this end, we denote by 
    \begin{equation*}
        \boldsymbol{\zeta}^{(J)}=\mathcal{S}_{\boldsymbol{\alpha}}^{J}\boldsymbol{c}^{(0)} + \sum_{\ell=1}^J\mathcal{S}_{\boldsymbol{\alpha}}^{J-\ell}\boldsymbol{q}^{(\ell)},
    \end{equation*}
    and observe that
    \begin{equation*}
        \|\boldsymbol{c}^{(J)}-\boldsymbol{\zeta}^{(J)}\|_\infty \leq \sum_{\ell=1}^J\|\mathcal{S}_{\boldsymbol{\alpha}}^{J-\ell}(\boldsymbol{d}^{(\ell)}-\boldsymbol{q}^{(\ell)})\|_\infty \leq \sum_{\ell=1}^J\|\mathcal{S}_{\boldsymbol{\alpha}}^{J-\ell}\|_{\infty}\|\boldsymbol{d}^{(\ell)}-\boldsymbol{q}^{(\ell)}\|_\infty,
    \end{equation*}
    where $\|\mathcal{S}_{\boldsymbol{\alpha}}^{J-\ell}\|_{\infty}$ is the operator norm of the refinement $\mathcal{S}_{\boldsymbol{\alpha}}^{J-\ell}$. See~\eqref{eqn:subdivision_norm} for an explicit formula of the norm.

    The uniform boundedness principle guarantees the existence of a constant $C>0$ such that $\sup_{j\in\mathbb{N}} \|\mathcal{S}_{\boldsymbol{\alpha}}^j\|_{\infty}\leq C$. Now, notice that $\boldsymbol{d}^{(\ell)}$ and $\boldsymbol{q}^{(\ell)}$ may differ only on even indices, and hence $\|\boldsymbol{d}^{(\ell)}-\boldsymbol{q}^{(\ell)}\|_\infty=\|\boldsymbol{d}^{(\ell)}\downarrow 2\|_\infty$. We then immediately get the desired result~\eqref{synthesis_bound}.
\end{proof}

In case where $\mathcal{S}_{\boldsymbol{\alpha}}$ is reversible, the detail coefficients of~\eqref{linear_multiscale_analysis} must have zero values, and hence the reconstruction error of~\eqref{synthesis_bound} vanishes. The takeaway message of Theorem~\ref{thm:synthesis_theorem} is that the reconstruction error is proportional to the violation of the property of having zero detail coefficients. For example, considering Theorem~\ref{thm:details_difference}, the error is controllable when the analyzed sequence is sampled from a differentiable function, in conjunction with a pseudo-reverse decimation operator with a reasonably small parameter $\xi$.

Another way to reduce the reconstruction error~\eqref{synthesis_bound} is as follows. Since the upper bound grows with respect to the scale $J$, it is possible to reduce its value by considering only a few iterations in multiscaling, rather than $J$ times. Specifically, for a fixed number of iterations $1\leq m < J$, we can decompose $\boldsymbol{c}^{(J)}$ via~\eqref{linear_multiscale_analysis} $m$ many times into the pyramid $\{ \boldsymbol{c}^{(J-m)}; \boldsymbol{d}^{(J-m+1)},\dots,\boldsymbol{d}^{(J)}\}$ and get a lower synthesis error, even for sequences with large consecutive changes $\Delta\boldsymbol{c}^{(J)}$. However, this reduction in the number of iterations may come at the expense of the quality of several applications, such as data compression, where the coarse approximation plays a crucial role. The above arguments are encapsulated in the following remark.

\begin{remark}
    Under appropriate regularization of the analyzed sequence $\boldsymbol{c}^{(J)}$, and assuming a sufficiently small pseudo-reversibility parameter $\xi$, the utilization of the multiscale transform~\eqref{linear_multiscale_analysis} and its inverse~\eqref{linear_multiscale_synthesis}, constructed from a non-reversible subdivision scheme, can be theoretically justified for a variety of applications in the linear setting. In particular, when the input sequences are derived from differentiable functions, the detail coefficients exhibit decay, resulting in a controlled reconstruction error~\eqref{synthesis_bound}. Moreover, in practical scenarios, only a limited number of multiscale decompositions is typically required, which further mitigates the reconstruction error. Taking into account the approximation error introduced by truncating the decimation coefficients~\cite{mattar2023pyramid}, numerical experiments confirm that the reconstruction error remains well controlled in practice, as we will see in Section~\ref{sec:numerical_linear_multiscaling}.
\end{remark}

Figure~\ref{fig:two_pyramids} epitomizes the theoretical results of this section; it illustrates the analysis and the synthesis with the pairs $(\mathcal{S}_{\boldsymbol{\alpha}}, \mathcal{D}_{\boldsymbol{\gamma}})$ and $(\mathcal{S}_{\widetilde{\boldsymbol{\alpha}}}, \mathcal{D}_{\boldsymbol{\gamma}})$. We conclude the section with two additional remarks.

\begin{remark}~\label{remark:Bspline}
    The method of pseudo-reversing~\eqref{eqn:pseudo_reverse} can be a remedy for reversible refinements $\mathcal{S}_{\boldsymbol{\alpha}}$ with a bad reversibility condition number. That is, refinements with high values of $\kappa(\boldsymbol{\alpha})$ in~\eqref{eqn:condition_number}. Although their reverses involve decimation coefficients $\boldsymbol{\gamma}\in\ell_1(\mathbb{Z})$, their decay rate may be poor due to Corollary~\ref{coro:kappa_of_pseudo} and thus require a large truncation support~\cite{mattar2023pyramid} for implementation. Indeed, a better decay rate can be enforced by pushing the zeros of $(\boldsymbol{\alpha}\downarrow2)(z)$ that are outside of the unit disk, further from $\mathbb{T}$, in a similar manner to pseudo-reversing. Refinements from the family of B-spline~\cite{de1978practical} subdivision schemes are key examples to such operators, as shown in Section~\ref{sec:numerical_examples}.
\end{remark}

\begin{remark}
    Despite having a perfect synthesis algorithm, the reason we avoid using the pair $(\mathcal{S}_{\widetilde{\boldsymbol{\alpha}}}, \mathcal{D}_{\boldsymbol{\gamma}})$ in multiscale transforms is that the approximated refinement $\mathcal{S}_{\widetilde{\boldsymbol{\alpha}}}$ may not inherit the essential properties of the original $\mathcal{S}_{\boldsymbol{\alpha}}$, e.g., the capability of some refinements to produce smooth limit functions may be lost when approximating $\mathcal{S}_{\boldsymbol{\alpha}}$ with $\mathcal{S}_{\widetilde{\boldsymbol{\alpha}}}$.
\end{remark}

\begin{figure}[H]
\centering
    \begin{tikzcd}[sep=large]
             & & \{ \boldsymbol{c}^{(0)};\boldsymbol{d}^{(1)},\dots,\boldsymbol{d}^{(J)}\} \arrow[drr, "\text{synthesis } \boldsymbol{\approx}", sloped] & &  \\ \boldsymbol{c}^{(J)} \arrow[urr, "\text{analysis with } (\mathcal{S}_{\boldsymbol{\alpha}}\text{,}\mathcal{D}_{\boldsymbol{\gamma}})", sloped] \arrow[drr, "\text{analysis with } (\mathcal{S}_{\widetilde{\boldsymbol{\alpha}}}\text{,}\mathcal{D}_{\boldsymbol{\gamma}})", sloped] & & & & \boldsymbol{c}^{(J)} \\
             & & \{ \boldsymbol{c}^{(0)};\widetilde{\boldsymbol{d}}^{(1)},\dots,\widetilde{\boldsymbol{d}}^{(J)}\} \arrow[urr, "\text{synthesis }\boldsymbol{=}", sloped] & &
    \end{tikzcd}
 \caption{Two pyramid transforms of the sequence $\boldsymbol{c}^{(J)}$. With a refinement operator $\mathcal{S}_{\boldsymbol{\alpha}}$, the upper transform uses the pair $(\mathcal{S}_{\boldsymbol{\alpha}}, \mathcal{D}_{\boldsymbol{\gamma}})$ while the lower transform uses $(\mathcal{S}_{\widetilde{\boldsymbol{\alpha}}}, \mathcal{D}_{\boldsymbol{\gamma}})$. The decimation operator $\mathcal{D}_{\boldsymbol{\gamma}}$ is mutual in both transforms and is the pseudo-reverse of $\mathcal{S}_{\boldsymbol{\alpha}}$.}
 \label{fig:two_pyramids}
\end{figure}


\section{Multiscaling manifold values} \label{sec:multiscalingManifoldData}

In this section, we adapt the multiscale transform~\eqref{linear_multiscale_analysis} to manifold-valued sequences. To this purpose, it is necessary to adapt the operators $\mathcal{S}_{\boldsymbol{\alpha}}$ and $\mathcal{D}_{\boldsymbol{\gamma}}$ of~\eqref{linear_subdivision_scheme} and~\eqref{linear_decimation} to manifolds, as well as defining the analogues to the elementary operations `$-$' and `$+$' appearing in the transform and its inverse. Indeed, there are various methods for adapting the multiscale transform; however, we follow the same adaptations and definitions as those in~\cite {mattar2023pyramid}.

Let $\mathcal{M}$ be an open complete Riemannian manifold equipped with Riemannian metric $\langle\cdot,\cdot\rangle$. The Riemannian geodesic distance $\rho:\mathcal{M}^2\rightarrow\mathbb{R}^{+}$ is defined by
\begin{equation}~\label{Riemannian_geodesic}
    \rho(x,y)=\inf_{\Gamma}\int_{a}^b\|\dot{\Gamma}(t)\|dt,
\end{equation}
where $\Gamma:[a,b]\rightarrow\mathcal{M}$ is a piecewise differentiable curve connecting the points $x=\Gamma(a)$ and $y=\Gamma(b)$, and $\|\cdot\|^2=\langle\cdot,\cdot\rangle$.

Based on the Riemannian geodesic~\eqref{Riemannian_geodesic}, the operators $\mathcal{S}_{\boldsymbol{\alpha}}$ and $\mathcal{D}_{\boldsymbol{\gamma}}$ of~\eqref{linear_subdivision_scheme} and~\eqref{linear_decimation} are adapted to $\mathcal{M}$ respectively by the optimization problems
\begin{equation}~\label{manifold_subdivision}
    \mathcal{T}_{\boldsymbol{\alpha}}(\boldsymbol{c})_j = \text{argmin}_{x\in\mathcal{M}}\sum_{k\in\mathbb{Z}}\alpha_{j-2k}\rho^2(x,c_k), \quad j\in\mathbb{Z},
\end{equation}
and
\begin{equation}~\label{manifold_decimation}
    \mathcal{Y}_{\boldsymbol{\gamma}}(\boldsymbol{c})_j = \text{argmin}_{x\in\mathcal{M}}\sum_{k\in\mathbb{Z}}\gamma_{j-k}\rho^2(x,c_{2k}),\quad j\in\mathbb{Z}, 
\end{equation}
for $\mathcal{M}$-valued sequences $\boldsymbol{c}$. When the solution of~\eqref{manifold_subdivision} or~\eqref{manifold_decimation} exists uniquely, we term the solution as \emph{Riemannian center of mass}~\cite{grove1973conjugate}. It is also referred to as the Karcher mean for matrices and the Frèchet mean in more general metric spaces; see~\cite{karcher2014riemannian}.

The global well-definedness of~\eqref{manifold_subdivision} and~\eqref{manifold_decimation} when $\boldsymbol{\alpha}$ and $\boldsymbol{\gamma}$ have non-negative entries is studied in~\cite{kobayashi1996foundations}. Moreover, in the framework where $\mathcal{M}$ has a non-positive sectional curvature, if the mask $\boldsymbol{\alpha}$ satisfies~\eqref{eqn:partition_of_unity}, then a globally unique solution to problem~\eqref{manifold_subdivision} can be found, see, e.g.~\cite{karcher1977riemannian, hardering2015intrinsic, sander2016geodesic}. The same argument applies to problem~\eqref{manifold_decimation} since the elements of $\boldsymbol{\gamma}$ must sum to 1 due to~\eqref{Z_transform}. Recent studies of manifolds with positive sectional curvature show necessary conditions for uniqueness on the spread of points with respect to the injectivity radius of $\mathcal{M}$~\cite{dyer2016barycentric, huning2022convergence}. We assume, without loss of generality, that both~\eqref{manifold_subdivision} and~\eqref{manifold_decimation} are uniquely solved for any $\mathcal{M}$-valued sequences $\boldsymbol{c}$. Otherwise, different regularization conditions can be used to determine unique values.

We say that the nonlinear operator $\mathcal{T}_{\boldsymbol{\alpha}}$ is non-reversible if its linear counterpart $\mathcal{S}_{\boldsymbol{\alpha}}$ is non-reversible. Similarly to pseudo-reversing refinements in the linear setting, as in Definition~\ref{defin:pseudo-reversing_operators}, we define the pseudo-reverse of the manifold-valued refinement $\mathcal{T}_{\boldsymbol{\alpha}}$. Namely, we say that the decimation operator $\mathcal{Y}_{\boldsymbol{\gamma}}$ is the pseudo-reverse of $\mathcal{T}_{\boldsymbol{\alpha}}$ if for some $\xi>0$, we have $\boldsymbol{\gamma}$ is the pseudo-reverse of $\boldsymbol{\alpha}$. In short, Figure~\ref{diag:pseudo_reversing_operators} illustrates the calculations behind pseudo-reversing refinement operators in the manifold setting, where linear operators are replaced with nonlinear ones.

Given a Riemannian manifold $\mathcal{M}$, recall that the exponential mapping $\text{exp}_p$ maps a vector $v$ in the tangent space $T_p\mathcal{M}$ to the end point of a geodesic of length $\|v\|$, which emanates from $p\in\mathcal{M}$ with an initial tangent vector $v$. Inversely, $\text{log}_p$ is the inverse map of $\text{exp}_p$ that takes an $\mathcal{M}$-valued element $q$ and returns a vector in the tangent space $T_p\mathcal{M}$. Following similar notations used in~\cite{grohs2009interpolatory} and~\cite{mattar2023pyramid}, we denote both maps by
\begin{equation}\label{manifold_operations}
    \text{log}_p(q)=q\ominus p\quad \text{and} \quad \text{exp}_p(v)=p\oplus v.
\end{equation}
We have thus defined the analogues $\ominus$ and $\oplus$ of the `$-$' and `$+$' operations appearing in~\eqref{linear_multiscale_analysis}, respectively. For any point $p\in\mathcal{M}$ we use the following notation $\ominus:\mathcal{M}^2\rightarrow T_p\mathcal{M}$ and $\oplus:\mathcal{M}\times T_p\mathcal{M}\rightarrow\mathcal{M}$. Then, the compatibility condition is
\begin{equation*}
    (p \oplus v) \ominus p = v,
\end{equation*}
for all $v\in T_p\mathcal{M}$ within the injectivity radius of $\mathcal{M}$. With the operators~\eqref{manifold_subdivision},~\eqref{manifold_decimation}, and~\eqref{manifold_operations} in hand, we are able to define the analogue of the multiscale transform~\eqref{linear_multiscale_analysis}.

\begin{defin}~\label{defin:manifold_even_singular_multiscale}
Let $\mathcal{M}$ be a Riemannian manifold, and let $\boldsymbol{c}^{(J)}$ be an $\mathcal{M}$-valued sequence of scale $J\in\mathbb{N}$ parametrized over the dyadic grid $2^{-J}\mathbb{Z}$, and let $\mathcal{T}_{\boldsymbol{\alpha}}$ be a refinement rule~\eqref{manifold_subdivision}. The multiscale transform is defined by
\begin{equation}~\label{manifold_even_singular_multiscale}
    \boldsymbol{c}^{(\ell-1)} = \mathcal{Y}_{\boldsymbol{\gamma}}\boldsymbol{c}^{(\ell)}, \quad \boldsymbol{d}^{(\ell)} = \boldsymbol{c}^{(\ell)} \ominus \mathcal{T}_{\boldsymbol{\alpha}}\boldsymbol{c}^{(\ell-1)}, \quad \ell=1,\dots,J,
\end{equation}
where $\mathcal{Y}_{\boldsymbol{\gamma}}$ is the pseudo-reverse of $\mathcal{T}_{\boldsymbol{\alpha}}$ for some $\xi>0$. The inverse transform of~\eqref{manifold_even_singular_multiscale} is defined by iterating
\begin{equation}~\label{manifold_synthesis_iterations}
    \boldsymbol{c}^{(\ell)} = \mathcal{T}_{\boldsymbol{\alpha}}\boldsymbol{c}^{(\ell - 1)} \oplus \boldsymbol{d}^{(\ell)}
\end{equation}
for $\ell=1\dots,J$.
\end{defin}

The first difference between the manifold and linear versions of the transform lies in the detail coefficients. In particular, for the manifold-valued transform~\eqref{manifold_even_singular_multiscale}, the sequences $\boldsymbol{c}^{(\ell)}$, $\ell=1,\dots,J$ are $\mathcal{M}$-valued, while the detail coefficients $\boldsymbol{d}^{(\ell)}$ are elements in the tangent bundle $T\mathcal{M}=\bigcup_{p\in\mathcal{M}}T_p\mathcal{M}$ associated with $\mathcal{M}$.

To investigate the properties of the multiscale transform~\eqref{manifold_even_singular_multiscale}, it is useful to introduce the following notation. For two sequences $\boldsymbol{c}$ and $\boldsymbol{m}$ in $\mathcal{M}$ we define
\begin{equation*}
    \mu(\boldsymbol{c}, \boldsymbol{m})=\sup_{j\in\mathbb{Z}}\rho(c_j, m_j) \quad \text{and} \quad \Delta_\mathcal{M}\boldsymbol{c} = \sup_{j\in\mathbb{Z}}\rho(c_{j}, c_{j+1}).
\end{equation*}

The following lemma is a borrowed result from~\cite{wallner2011convergence}, see Lemma 3 therein, and it concerns Cartan-Hadamard manifolds, which are simply connected manifolds with non-positive sectional curvature. The lemma is tailored to our setting and will be useful in proving our theoretical results.

\begin{lemma}~\label{lem:manifold_refinement_distance}
    Let $\mathcal{M}$ be a Cartan-Hadamard manifold, and let $\boldsymbol{c}$ be an arbitrary $\mathcal{M}$-valued sequence. Let $\mathcal{T}_{\boldsymbol{\alpha}}$ be a subdivision scheme adapted to $\mathcal{M}$ via~\eqref{manifold_subdivision} with non-negative coefficients $\boldsymbol{\alpha}$. Then, for any $\xi>0$, we have
    \begin{equation}~\label{eqn:perturbated_RCoM}
        \mu(\mathcal{T}_{\boldsymbol{\alpha}}\boldsymbol{c}, \mathcal{T}_{\widetilde{\boldsymbol{\alpha}}}\boldsymbol{c})\leq \sigma^2 \|\boldsymbol{\alpha}-\widetilde{\boldsymbol{\alpha}}\|_1\Delta_{\mathcal{M}}\boldsymbol{c},
    \end{equation}
    where $\sigma$ is the support size of $\boldsymbol{\alpha}$.
\end{lemma}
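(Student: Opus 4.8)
The statement is a stability estimate for Riemannian centers of mass: for each index $j$, both $\mathcal{T}_{\boldsymbol{\alpha}}(\boldsymbol{c})_j$ and $\mathcal{T}_{\widetilde{\boldsymbol{\alpha}}}(\boldsymbol{c})_j$ minimize the energy $E_{\boldsymbol{w}}(x)=\sum_{k}w_{j-2k}\rho^2(x,c_k)$ over the \emph{same} data $\{c_k\}$, but with the two weight systems $\boldsymbol{w}=\boldsymbol{\alpha}$ and $\boldsymbol{w}=\widetilde{\boldsymbol{\alpha}}$ of~\eqref{manifold_subdivision}. The plan is to exploit the Cartan--Hadamard geometry: by the Hessian comparison theorem one has $\operatorname{Hess}\tfrac12\rho^2(\cdot,p)\geq\mathrm{Id}$ for every $p\in\mathcal{M}$, so, the weights $\boldsymbol{\alpha}$ being non-negative and summing to $1$, the energy $E_{\boldsymbol{\alpha}}$ is $2$-strongly geodesically convex with unique minimizer $m:=\mathcal{T}_{\boldsymbol{\alpha}}(\boldsymbol{c})_j$. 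I would then estimate $\rho(m,\widetilde{m})$, where $\widetilde{m}:=\mathcal{T}_{\widetilde{\boldsymbol{\alpha}}}(\boldsymbol{c})_j$, by comparing the two first-order optimality conditions.

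Concretely, I would first record the barycentric (vanishing-gradient) equations $\sum_k\alpha_{j-2k}\log_m(c_k)=0$ and $\sum_k\widetilde{\alpha}_{j-2k}\log_{\widetilde{m}}(c_k)=0$, using $\nabla_x\tfrac12\rho^2(x,c_k)=-\log_x(c_k)$. Evaluating $\nabla E_{\boldsymbol{\alpha}}$ at the competitor $\widetilde{m}$ and subtracting the second identity collapses it to $\nabla E_{\boldsymbol{\alpha}}(\widetilde{m})=-2\sum_k(\alpha_{j-2k}-\widetilde{\alpha}_{j-2k})\log_{\widetilde{m}}(c_k)$. Strong convexity gives $2\,\rho(m,\widetilde{m})\leq\|\nabla E_{\boldsymbol{\alpha}}(\widetilde{m})\|$, so, after taking norms inside the sum and using $\|\log_{\widetilde{m}}(c_k)\|=\rho(\widetilde{m},c_k)$,
\begin{equation*}
    \rho(m,\widetilde{m})\leq\sum_k|\alpha_{j-2k}-\widetilde{\alpha}_{j-2k}|\,\rho(\widetilde{m},c_k).
\end{equation*}
It remains to bound each $\rho(\widetilde{m},c_k)$ by the local spread of the data. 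For this I would use that on a Cartan--Hadamard manifold a center of mass lies in the geodesic convex hull of the active points (nearest-point projection onto a closed convex set is distance non-increasing, which would contradict minimality otherwise), and that such a hull preserves the diameter. Since $\boldsymbol{\alpha}$, hence $\widetilde{\boldsymbol{\alpha}}$, is supported on a window of width $\sigma$, the active points $c_k$ are at most $\sigma$ consecutive steps apart, whence $\rho(\widetilde{m},c_k)\leq\operatorname{diam}\{c_i\}\leq\sigma\,\Delta_{\mathcal{M}}\boldsymbol{c}$. Finally, the inner sum is dominated by $\|\boldsymbol{\alpha}-\widetilde{\boldsymbol{\alpha}}\|_1$ (and in fact vanishes for odd $j$, since $\widetilde{\boldsymbol{\alpha}}$ and $\boldsymbol{\alpha}$ agree on odd indices); combining and taking $\sup_{j\in\mathbb{Z}}$ gives the estimate on $\mu(\mathcal{T}_{\boldsymbol{\alpha}}\boldsymbol{c},\mathcal{T}_{\widetilde{\boldsymbol{\alpha}}}\boldsymbol{c})$, with the factor $\sigma^2$ comfortably absorbing the constants.

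The main obstacle is controlling $\rho(\widetilde{m},c_k)$ for the \emph{perturbed} barycenter $\widetilde{m}$: the hypothesis only grants non-negativity of $\boldsymbol{\alpha}$, whereas the pseudo-reversed mask $\widetilde{\boldsymbol{\alpha}}$ may carry negative weights, in which case $E_{\widetilde{\boldsymbol{\alpha}}}$ need not be convex and $\widetilde{m}$ need not lie in a convex hull. The robust remedy is to keep the strong-convexity step on the $\boldsymbol{\alpha}$-side, where non-negativity holds, and to split $\rho(\widetilde{m},c_k)\leq\rho(\widetilde{m},m)+\rho(m,c_k)$, bounding $\rho(m,c_k)$ by the hull argument applied to $\boldsymbol{\alpha}$ and absorbing the residual $\rho(\widetilde{m},m)$ for small perturbations; alternatively, one invokes directly the cited stability estimate for center-of-mass rules, Lemma~3 in~\cite{wallner2011convergence}, which packages exactly this argument and yields the stated bound. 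Either way, the self-contained route already delivers a constant of order $\sigma$, which implies the claimed (non-tight) factor $\sigma^2$ since $\sigma\geq1$.
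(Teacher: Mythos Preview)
The paper does not prove this lemma: it is stated as a borrowed result from \cite{wallner2011convergence}, Lemma~3, with no argument given. Your proposal therefore supplies strictly more than the paper does, and its backbone---Hessian comparison on a Cartan--Hadamard manifold yielding $2$-strong convexity of $E_{\boldsymbol{\alpha}}$, the gradient identity $\nabla E_{\boldsymbol{\alpha}}(\widetilde{m})=-2\sum_k(\alpha_{j-2k}-\widetilde{\alpha}_{j-2k})\log_{\widetilde{m}}(c_k)$ obtained from the two barycentric equations, and the convex-hull localization of the $\boldsymbol{\alpha}$-mean---is exactly the mechanism behind the cited reference. Your fallback to Lemma~3 of \cite{wallner2011convergence} is precisely what the paper itself does.

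One point deserves tightening. Your closing sentence asserts that the self-contained route ``already delivers a constant of order $\sigma$,'' but after the triangle-inequality split $\rho(\widetilde{m},c_k)\le\rho(\widetilde{m},m)+\rho(m,c_k)$ and absorption you actually obtain
\[
\rho(m,\widetilde{m})\le\frac{\eta}{1-\eta}\,\sigma\,\Delta_{\mathcal{M}}\boldsymbol{c},\qquad \eta:=\sum_k|\alpha_{j-2k}-\widetilde{\alpha}_{j-2k}|\le\|\boldsymbol{\alpha}-\widetilde{\boldsymbol{\alpha}}\|_1,
\]
which matches $\sigma^2\|\boldsymbol{\alpha}-\widetilde{\boldsymbol{\alpha}}\|_1\Delta_{\mathcal{M}}\boldsymbol{c}$ only when $\eta\le 1-1/\sigma$. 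Since the lemma is stated for \emph{all} $\xi>0$ and $\widetilde{\boldsymbol{\alpha}}$ may carry negative entries with $\|\boldsymbol{\alpha}-\widetilde{\boldsymbol{\alpha}}\|_1$ not a priori small, the purely self-contained argument does not cover the full range; invoking the external lemma (as both you and the paper do) is what closes that gap.
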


Although the estimation~\eqref{eqn:perturbated_RCoM} is true for sequences in Cartan-Hadamard spaces, it also holds true for general manifolds under the assumption that the data points of $\boldsymbol{c}$ are sufficiently dense~\cite{wallner2005convergence, wallner2006smoothness}. We are now ready to present the analogue of Lemma~\ref{lem:details}.

\begin{lemma}~\label{lem:Manifold_details}
    Let $\boldsymbol{c}^{(J)}$ be a sequence in a Cartan-Hadamard manifold $\mathcal{M}$, and let $\mathcal{T}_{\boldsymbol{\alpha}}$ be a refinement operator, where $\boldsymbol{\alpha}$ has non-negative coefficients, with a pseudo-reverse $\mathcal{Y}_{\boldsymbol{\gamma}}$ for some $\xi>0$, that is, $\boldsymbol{\gamma}=(\widetilde{\boldsymbol{\alpha}}\downarrow2)^{-1}$. Denote by $\{\boldsymbol{c}^{(0)};\boldsymbol{d}^{(1)},\dots,\boldsymbol{d}^{(J)}\}$ the multiscale representation~\eqref{manifold_even_singular_multiscale} of $\boldsymbol{c}^{(J)}$ using $\mathcal{T}_{\boldsymbol{\alpha}}$ and $\mathcal{Y}_{\boldsymbol{\gamma}}$. Then,
    \begin{equation}~\label{eqn:manifold_details_difference}
        \|\boldsymbol{d}^{(\ell)}\downarrow 2\|_\infty \leq L_\mathcal{M}(\xi) \Delta_\mathcal{M}\boldsymbol{c}^{(\ell)},
    \end{equation}
    for any $\ell=1,\dots,J$ where $\|\boldsymbol{d}\|_{\infty}=\sup_{j\in\mathbb{Z}}\|d_j\|$ for any sequence $\boldsymbol{d}$ in the tangent bundle of $\mathcal{M}$, and
    \begin{equation}~\label{eqn:constant_L_M}
        L_\mathcal{M}(\xi) = F_{\mathcal{Y}}\sigma^2\|\boldsymbol{\alpha}-\widetilde{\boldsymbol{\alpha}}\|_1,
    \end{equation}
    for some constant $F_{\mathcal{Y}}>0$, where $\sigma$ is the support size of $\boldsymbol{\alpha}$.
\end{lemma}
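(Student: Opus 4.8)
The plan is to mirror the proof of the linear Lemma~\ref{lem:details}, replacing absolute values by the geodesic distance $\rho$ and substituting the coefficient-wise telescoping of the linear argument by the proximity estimate of Lemma~\ref{lem:manifold_refinement_distance}. Fix a scale $\ell$ and an even index $2j$, and set $\boldsymbol{c}^{(\ell-1)}=\mathcal{Y}_{\boldsymbol{\gamma}}\boldsymbol{c}^{(\ell)}$. Since $d^{(\ell)}_{2j}=c^{(\ell)}_{2j}\ominus(\mathcal{T}_{\boldsymbol{\alpha}}\boldsymbol{c}^{(\ell-1)})_{2j}$ is the tangent vector $\log_{(\mathcal{T}_{\boldsymbol{\alpha}}\boldsymbol{c}^{(\ell-1)})_{2j}}(c^{(\ell)}_{2j})$, its norm equals the geodesic distance, i.e. $\|d^{(\ell)}_{2j}\|=\rho\big(c^{(\ell)}_{2j},(\mathcal{T}_{\boldsymbol{\alpha}}\boldsymbol{c}^{(\ell-1)})_{2j}\big)$. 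This reduces the lemma to bounding one geodesic distance per even index.

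The central step is the manifold analogue of the identity $q^{(\ell)}_{2j}=0$ from the linear proof. Because $\boldsymbol{\gamma}=(\widetilde{\boldsymbol{\alpha}}\downarrow 2)^{-1}$, the matched pair $(\mathcal{T}_{\widetilde{\boldsymbol{\alpha}}},\mathcal{Y}_{\boldsymbol{\gamma}})$ reconstructs the even samples, that is $(\mathcal{T}_{\widetilde{\boldsymbol{\alpha}}}\boldsymbol{c}^{(\ell-1)})_{2j}=c^{(\ell)}_{2j}$, so that it produces vanishing even detail. Granting this, I replace $c^{(\ell)}_{2j}$ by $(\mathcal{T}_{\widetilde{\boldsymbol{\alpha}}}\boldsymbol{c}^{(\ell-1)})_{2j}$ inside the distance and apply the triangle-type bound $\|d^{(\ell)}_{2j}\|=\rho\big((\mathcal{T}_{\widetilde{\boldsymbol{\alpha}}}\boldsymbol{c}^{(\ell-1)})_{2j},(\mathcal{T}_{\boldsymbol{\alpha}}\boldsymbol{c}^{(\ell-1)})_{2j}\big)\le \mu(\mathcal{T}_{\widetilde{\boldsymbol{\alpha}}}\boldsymbol{c}^{(\ell-1)},\mathcal{T}_{\boldsymbol{\alpha}}\boldsymbol{c}^{(\ell-1)})$. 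Since $\boldsymbol{\alpha}$ is non-negative and $\widetilde{\boldsymbol{\alpha}}$ is its pseudo-reversing approximant, Lemma~\ref{lem:manifold_refinement_distance} applies with $\boldsymbol{c}=\boldsymbol{c}^{(\ell-1)}$ and gives $\|d^{(\ell)}_{2j}\|\le \sigma^2\|\boldsymbol{\alpha}-\widetilde{\boldsymbol{\alpha}}\|_1\,\Delta_{\mathcal{M}}\boldsymbol{c}^{(\ell-1)}$.

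It remains to transfer this bound from scale $\ell-1$ to scale $\ell$. In analogy with the linear inequality $\Delta\boldsymbol{c}^{(\ell-1)}\le 2\|\boldsymbol{\gamma}\|_1\Delta\boldsymbol{c}^{(\ell)}$ used in Theorem~\ref{thm:details_difference}, I would establish a contraction-type estimate $\Delta_{\mathcal{M}}\boldsymbol{c}^{(\ell-1)}=\Delta_{\mathcal{M}}(\mathcal{Y}_{\boldsymbol{\gamma}}\boldsymbol{c}^{(\ell)})\le F_{\mathcal{Y}}\,\Delta_{\mathcal{M}}\boldsymbol{c}^{(\ell)}$ for the decimation operator~\eqref{manifold_decimation}, where $F_{\mathcal{Y}}$ plays the role of $2\|\boldsymbol{\gamma}\|_1$ and depends on $\boldsymbol{\gamma}$ (hence on $\xi$) and on $\mathcal{M}$. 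Such a constant exists from the Lipschitz dependence of the Riemannian center of mass on its input data, which controls consecutive distances of the decimated sequence by those of $\boldsymbol{c}^{(\ell)}$. Substituting yields $\|d^{(\ell)}_{2j}\|\le F_{\mathcal{Y}}\sigma^2\|\boldsymbol{\alpha}-\widetilde{\boldsymbol{\alpha}}\|_1\,\Delta_{\mathcal{M}}\boldsymbol{c}^{(\ell)}=L_{\mathcal{M}}(\xi)\,\Delta_{\mathcal{M}}\boldsymbol{c}^{(\ell)}$, and taking the supremum over $j$ gives~\eqref{eqn:manifold_details_difference}.

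The main obstacle is the nonlinear matched-pair reconstruction $(\mathcal{T}_{\widetilde{\boldsymbol{\alpha}}}\mathcal{Y}_{\boldsymbol{\gamma}}\boldsymbol{c}^{(\ell)})_{2j}=c^{(\ell)}_{2j}$. In the linear setting it is immediate from the convolution identity $(\widetilde{\boldsymbol{\alpha}}\downarrow 2)*\boldsymbol{\gamma}=\boldsymbol{\delta}$, and it is automatic for interpolatory schemes; in the general manifold-valued case it is delicate, since the composition of two Riemannian centers of mass need not be a center of mass with convolved weights, so this step must be verified or invoked from the construction of~\cite{mattar2023pyramid} rather than assumed. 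A secondary subtlety is that the decimation weights $\boldsymbol{\gamma}$ are signed, so $F_{\mathcal{Y}}$ cannot be obtained from Lemma~\ref{lem:manifold_refinement_distance} directly and instead requires a Lipschitz estimate for the center of mass that remains valid for weights of mixed sign on a Cartan--Hadamard manifold.
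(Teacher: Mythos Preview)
Your proposal is correct and follows essentially the same route as the paper: reduce $\|d^{(\ell)}_{2j}\|$ to a geodesic distance, use the vanishing of the even details for the matched pair $(\mathcal{T}_{\widetilde{\boldsymbol{\alpha}}},\mathcal{Y}_{\boldsymbol{\gamma}})$ to replace $c^{(\ell)}_{2j}$ by $(\mathcal{T}_{\widetilde{\boldsymbol{\alpha}}}\boldsymbol{c}^{(\ell-1)})_{2j}$, apply Lemma~\ref{lem:manifold_refinement_distance}, and then pass from $\Delta_{\mathcal{M}}\boldsymbol{c}^{(\ell-1)}$ to $\Delta_{\mathcal{M}}\boldsymbol{c}^{(\ell)}$ via a decimation-safe constant $F_{\mathcal{Y}}$. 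The two subtleties you flag are handled exactly as you anticipate: the paper asserts the matched-pair reconstruction $(\mathcal{T}_{\widetilde{\boldsymbol{\alpha}}}\mathcal{Y}_{\boldsymbol{\gamma}}\boldsymbol{c}^{(\ell)})_{2j}=c^{(\ell)}_{2j}$ directly, and it obtains $F_{\mathcal{Y}}$ by citing Proposition~5.4 of~\cite{mattar2023pyramid} rather than re-deriving a Lipschitz estimate for signed-weight centers of mass.
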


\begin{proof}
    We first observe that because $\widetilde{\boldsymbol{\alpha}}$ and $\boldsymbol{\gamma}$ satisfy~\eqref{Z_transform}, the resulting detail coefficients $q^{(\ell)}_{2j}$ obtained using $\mathcal{T}_{\widetilde{\boldsymbol{\alpha}}}$ and $\mathcal{Y}_{\boldsymbol{\gamma}}$ in~\eqref{manifold_even_singular_multiscale}, are the zero elements in the respective tangent spaces of $\mathcal{M}$, for all $j\in\mathbb{Z}$ and $\ell=1,\dots,J$. That is, $\rho\big(c^{(\ell)}_{2j}, (\mathcal{T}_{\widetilde{\boldsymbol{\alpha}}}\boldsymbol{c}^{(\ell-1)})_{2j}\big)=0$. Furthermore, Proposition 5.4 of~\cite{mattar2023pyramid} shows that $\mathcal{Y}_{\boldsymbol{\gamma}}$ is \emph{decimation-safe}, that is, there exists a constant $F_{\mathcal{Y}}>0$ such that $\Delta_{\mathcal{M}}(\mathcal{Y}_{\boldsymbol{\gamma}}\boldsymbol{c})\leq F_{\mathcal{Y}}\Delta_{\mathcal{M}}\boldsymbol{c}$ for any sequence $\boldsymbol{c}$ in $\mathcal{M}$. Now, observe that
    \begin{align*}
        \|d^{(\ell)}_{2j}\| = \rho\big(c^{(\ell)}_{2j}, (\mathcal{T}_{\boldsymbol{\alpha}}\boldsymbol{c}^{(\ell-1)})_{2j}\big) & \leq \rho\big(c^{(\ell)}_{2j}, (\mathcal{T}_{\widetilde{\boldsymbol{\alpha}}}\boldsymbol{c}^{(\ell-1)})_{2j}\big) + \rho\big((\mathcal{T}_{\widetilde{\boldsymbol{\alpha}}}\boldsymbol{c}^{(\ell-1)})_{2j}, (\mathcal{T}_{\boldsymbol{\alpha}}\boldsymbol{c}^{(\ell-1)})_{2j}\big).
    \end{align*}
    Now, since the first distance vanishes, we get
    \begin{align*}
        \|\boldsymbol{d}^{(\ell)}\downarrow2\|_\infty &\leq \sup_{j\in\mathbb{Z}}\rho\big((\mathcal{T}_{\widetilde{\boldsymbol{\alpha}}}\boldsymbol{c}^{(\ell-1)})_{j}, (\mathcal{T}_{\boldsymbol{\alpha}}\boldsymbol{c}^{(\ell-1)})_{j}\big) = \mu\big(\mathcal{T}_{\widetilde{\boldsymbol{\alpha}}}\boldsymbol{c}^{(\ell-1)}, \mathcal{T}_{\boldsymbol{\alpha}}\boldsymbol{c}^{(\ell-1)}\big) \\
        & \leq \sigma^2\|\boldsymbol{\alpha} - \widetilde{\boldsymbol{\alpha}}\|_1\Delta_{\mathcal{M}}(\mathcal{Y}_{\boldsymbol{\gamma}}\boldsymbol{c}^{(\ell)}) \leq F_{\mathcal{Y}} \sigma^2\|\boldsymbol{\alpha} - \widetilde{\boldsymbol{\alpha}}\|_1\Delta_{\mathcal{M}}\boldsymbol{c}^{(\ell)},
    \end{align*}
    as required, where the second line is based on Lemma~\ref{lem:manifold_refinement_distance}.
\end{proof}

Conclusions similar to those of Lemma~\ref{lem:details} can be drawn from Lemma~\ref{lem:Manifold_details} for the manifold-valued transform~\eqref{manifold_even_singular_multiscale}. In particular, if the refinement $\mathcal{T}_{\boldsymbol{\alpha}}$ is reversible, then $\|\boldsymbol{\alpha}-\widetilde{\boldsymbol{\alpha}}\|_1=0$ for all $\xi>0$, the constant $L_\mathcal{M}(\xi)$ of~\eqref{eqn:constant_L_M} vanishes as a result, and hence we have zero even detail coefficients. Otherwise, the maximal norm of the even detail coefficients $d^{(\ell)}_{2j}$, $j\in\mathbb{Z}$ does not only grow with respect to the perturbation $\|\boldsymbol{\alpha}-\widetilde{\boldsymbol{\alpha}}\|_1$, but also depends on the nature of the analyzed sequence. In the following corollary, we present the analogue of Theorem~\ref{thm:details_difference}.

\begin{coro}
    Under the assumption of Lemma~\ref{lem:Manifold_details}, if the sequence $\boldsymbol{c}^{(J)}$ is assumed to be sampled from a differentiable curve $\Gamma\subset\mathcal{M}$ over the arc-length parametrization grid $2^{-J}\mathbb{Z}$, then the term $\Delta_\mathcal{M}\boldsymbol{c}^{(\ell)}$ of~\eqref{eqn:manifold_details_difference} satisfies
    \begin{equation*}
        \Delta_{\mathcal{M}}\boldsymbol{c}^{(\ell)}\leq P_{\mathcal{M}}^J(\xi)\|\nabla\Gamma\|_\infty(2P_{\mathcal{M}}(\xi))^{-\ell},
    \end{equation*}
    for all $\ell=1,\dots,J$ and some $P_{\mathcal{M}}(\xi)>1$, where $\|\nabla\Gamma\|_\infty=\sup_{t\in\mathbb{R}}\|\nabla\Gamma(t)\|$. This fact appears as Lemma 5.8 in~\cite{mattar2023pyramid}. Consequently, the even detail coefficients must decay geometrically with respect to the scale $\ell$ by the factor $2P_{\mathcal{M}}(\xi)$.
\end{coro}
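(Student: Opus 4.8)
The plan is to follow the proof of Theorem~\ref{thm:details_difference} essentially line by line, replacing its two linear ingredients — the mean value theorem and the commutation of convolution with $\Delta$ — by Riemannian substitutes, and then to feed the resulting decay of $\Delta_\mathcal{M}\boldsymbol{c}^{(\ell)}$ into Lemma~\ref{lem:Manifold_details}. First I would fix the finest scale. Because $\boldsymbol{c}^{(J)}$ samples the differentiable curve $\Gamma$ on the grid $2^{-J}\mathbb{Z}$, the arc of $\Gamma$ joining two consecutive samples is an admissible competitor in the infimum~\eqref{Riemannian_geodesic} defining $\rho$, so $\rho(c^{(J)}_j, c^{(J)}_{j+1})$ is bounded by the length of that arc, namely by $2^{-J}\|\nabla\Gamma\|_\infty$. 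Taking the supremum over $j$ gives $\Delta_\mathcal{M}\boldsymbol{c}^{(J)} \leq 2^{-J}\|\nabla\Gamma\|_\infty$, the exact analogue of the estimate $\Delta\boldsymbol{c}^{(J)} \leq 2^{-J}\|f'\|_\infty$.

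Next I would propagate this estimate up the pyramid using the decimation-safe property already quoted inside the proof of Lemma~\ref{lem:Manifold_details}: by Proposition 5.4 of~\cite{mattar2023pyramid} there exists $F_\mathcal{Y}>0$ with $\Delta_\mathcal{M}(\mathcal{Y}_{\boldsymbol{\gamma}}\boldsymbol{c}) \leq F_\mathcal{Y}\Delta_\mathcal{M}\boldsymbol{c}$ for every $\mathcal{M}$-valued $\boldsymbol{c}$. Since $\boldsymbol{c}^{(\ell-1)} = \mathcal{Y}_{\boldsymbol{\gamma}}\boldsymbol{c}^{(\ell)}$, this reads $\Delta_\mathcal{M}\boldsymbol{c}^{(\ell-1)} \leq F_\mathcal{Y}\Delta_\mathcal{M}\boldsymbol{c}^{(\ell)}$, the nonlinear replacement for the one-step linear bound $\Delta\boldsymbol{c}^{(\ell-1)} \leq 2\|\boldsymbol{\gamma}\|_1\Delta\boldsymbol{c}^{(\ell)}$, with $F_\mathcal{Y}$ in the role of $2\|\boldsymbol{\gamma}\|_1$. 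Note that $\mathcal{Y}_{\boldsymbol{\gamma}}$ of~\eqref{manifold_decimation} already incorporates the downsampling, so $F_\mathcal{Y}$ absorbs both the factor-$2$ effect and the averaging. Iterating the one-step inequality $J-\ell$ times and inserting the finest-scale bound yields
\[
   \Delta_\mathcal{M}\boldsymbol{c}^{(\ell)} \leq F_\mathcal{Y}^{\,J-\ell}\,2^{-J}\,\|\nabla\Gamma\|_\infty = (F_\mathcal{Y}/2)^{\,J-\ell}\,2^{-\ell}\,\|\nabla\Gamma\|_\infty .
\]

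To cast this in the stated form I would set $P_\mathcal{M}(\xi) = \max\{F_\mathcal{Y}/2,\,c\}$ for any fixed $c>1$, so that $P_\mathcal{M}(\xi)>1$ and $P_\mathcal{M}(\xi) \geq F_\mathcal{Y}/2$. Since $J-\ell \geq 0$, monotonicity gives $(F_\mathcal{Y}/2)^{J-\ell} \leq P_\mathcal{M}(\xi)^{J-\ell}$, whence
\[
   \Delta_\mathcal{M}\boldsymbol{c}^{(\ell)} \leq P_\mathcal{M}(\xi)^{\,J-\ell}\,2^{-\ell}\,\|\nabla\Gamma\|_\infty = P_\mathcal{M}^J(\xi)\,\|\nabla\Gamma\|_\infty\,(2P_\mathcal{M}(\xi))^{-\ell},
\]
which is the claimed bound; this is the manifold counterpart of the algebraic rearrangement $(2\|\boldsymbol{\gamma}\|_1)^{J-\ell}2^{-J} = \|\boldsymbol{\gamma}\|_1^J(2\|\boldsymbol{\gamma}\|_1)^{-\ell}$ used in Theorem~\ref{thm:details_difference}. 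Substituting into~\eqref{eqn:manifold_details_difference} of Lemma~\ref{lem:Manifold_details} then gives $\|\boldsymbol{d}^{(\ell)}\downarrow 2\|_\infty \leq L_\mathcal{M}(\xi)P_\mathcal{M}^J(\xi)\|\nabla\Gamma\|_\infty(2P_\mathcal{M}(\xi))^{-\ell}$, which decays geometrically in $\ell$ by the factor $2P_\mathcal{M}(\xi)>2$, proving the final assertion.

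The crux — and the only genuinely nonlinear point — is the one-step contraction $\Delta_\mathcal{M}(\mathcal{Y}_{\boldsymbol{\gamma}}\boldsymbol{c}) \leq F_\mathcal{Y}\Delta_\mathcal{M}\boldsymbol{c}$. In the linear setting the analogous inequality is transparent because $\mathcal{D}_{\boldsymbol{\gamma}}\boldsymbol{c} = \boldsymbol{\gamma}*(\boldsymbol{c}\downarrow 2)$ and $*$ commutes with $\Delta$; but $\mathcal{Y}_{\boldsymbol{\gamma}}$ is a weighted Riemannian center of mass~\eqref{manifold_decimation} with no additive structure, so the existence of a finite $F_\mathcal{Y}$ hinges on curvature control — the Cartan-Hadamard hypothesis, or sufficient sample density in the general case. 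I would treat this as imported from~\cite{mattar2023pyramid} rather than reprove it. Everything else — the geodesic mean value estimate, the exponent arithmetic, and the harmless enlargement of $P_\mathcal{M}(\xi)$ to exceed $1$ — is routine bookkeeping.
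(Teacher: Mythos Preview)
Your argument is correct and is precisely the Riemannian transcription of the proof of Theorem~\ref{thm:details_difference}: the arc-length competitor bound $\Delta_\mathcal{M}\boldsymbol{c}^{(J)}\le 2^{-J}\|\nabla\Gamma\|_\infty$, the decimation-safe one-step inequality, the iteration, and the exponent rearrangement are all sound. The paper itself does not give a proof of this corollary at all --- it simply records the bound and points to Lemma~5.8 of~\cite{mattar2023pyramid} for the fact. So you have supplied more than the paper does, and what you supplied matches both the linear template and the cited source's content. The only cosmetic point is that your ad~hoc enlargement $P_\mathcal{M}(\xi)=\max\{F_\mathcal{Y}/2,c\}$ is one of several ways to force $P_\mathcal{M}(\xi)>1$; the paper does not specify the constant beyond asserting its existence, so this is harmless.
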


We next analyze the synthesis error in a similar manner to Theorem~\ref{thm:synthesis_theorem}. To this end, we define the \emph{stability} of refinement operators. A refinement operator $\mathcal{T}_{\boldsymbol{\alpha}}$ of~\eqref{manifold_subdivision} is called \emph{stable} if there exists a constant $S_{\mathcal{T}}>0$ such that
\begin{equation}~\label{eqn:stability}
    \mu (\mathcal{T}_{\boldsymbol{\alpha}}\boldsymbol{c}, \mathcal{T}_{\boldsymbol{\alpha}}\boldsymbol{m}) \leq S_{\mathcal{T}} \mu(\boldsymbol{c},\boldsymbol{m}),
\end{equation}
for all sequences $\boldsymbol{c}$ and $\boldsymbol{m}$. The stability condition has been studied, for example, in~\cite{grohs2010stability}.

It turns out that an analogue of Theorem~\ref{thm:synthesis_theorem} can be obtained intrinsically, when the curvature of the manifold is bounded. Next, we present such a result assuming $\mathcal{M}$ is complete, without boundary, with non-negative sectional curvature. For that, we recall two classical theorems: the first and second Rauch comparison theorems. For more details see~\cite[Chapter 3]{gromoll2009metric} and references therein.

Let $p_j \in \mathcal{M}$, $j=1,2$ be two points and $v_j \in T_{p_j}\mathcal{M}$ their vectors in the tangent spaces such that $\| v_1 \|=\| v_2 \|$ and the value is smaller than the injectivity radius of $\mathcal{M}$. Let $ G(p_1,p_2)$ be the geodesic line connecting $p_1$ and $p_2$ and $\pg_{p_2}(v_1) \in T_{p_2}\mathcal{M}$ be the parallel transport of $v_1$ along $G(p_1,p_2)$ to $T_{p_2}\mathcal{M}$. Then, the first Rauch theorem suggests that
\begin{equation} \label{eqn:rauch1}
\rho \big(p_2 \oplus v_2, p_2 \oplus \pg_{p_2}(v_1)\big)  \le \|v_2- \pg_{p_2}(v_1) \|.
\end{equation}
Furthermore, the second Rauch theorem implies that 
\begin{equation} \label{eqn:rauch2}
    \rho \big(p_1 \oplus v_1, p_2 \oplus \pg_{p_2}(v_1)\big)  \le \rho( p_1, p_2).
\end{equation}

We are now ready to study the reconstruction error that stems from the synthesis~\eqref{manifold_synthesis_iterations}, after setting half of the detail coefficients to zero, in an analogous manner to Theorem~\ref{thm:synthesis_theorem}.

\begin{theorem}~\label{ManifoldStabilityTHM}
    Let $\boldsymbol{c}^{(J)}$ be an $\mathcal{M}$-valued sequence where $\mathcal{M}$ is a complete, without boundary, with non-negative sectional curvature,  and let $\mathcal{T}_{\boldsymbol{\alpha}}$ be a refinement operator~\eqref{manifold_subdivision} with a pseudo-reverse $\mathcal{Y}_{\boldsymbol{\gamma}}$ for some $\xi>0$. Denote by $\{\boldsymbol{c}^{(0)};\boldsymbol{d}^{(1)},\dots,\boldsymbol{d}^{(J)}\}$ the multiscale representation~\eqref{manifold_even_singular_multiscale} of $\boldsymbol{c}^{(J)}$ using $\mathcal{T}_{\boldsymbol{\alpha}}$ and $ \mathcal{Y}_{\boldsymbol{\gamma}}$. Then, there exists $C_{\mathcal{M}}>0$ such that
    \begin{align}~\label{eqn:manifold_stability}
            \mu\big(\boldsymbol{c}^{(J)}, \boldsymbol{\zeta}^{(J)}\big) \leq C_{\mathcal{M}}\sum_{\ell=1}^{J}\|\boldsymbol{d}^{(\ell)}\downarrow 2\|_\infty,
    \end{align}
    where $\boldsymbol{\zeta}^{(J)}$ is the synthesized sequence after setting the even detail coefficients of the multiscale representation to zero. In other words, the reconstruction error is proportional to the cumulative errors studied in~\eqref{eqn:manifold_details_difference}.
\end{theorem}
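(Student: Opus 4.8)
The plan is to mirror the structure of the proof of Theorem~\ref{thm:synthesis_theorem}, replacing the linear-algebraic telescoping by a level-by-level error propagation that is controlled intrinsically through the two Rauch comparison theorems~\eqref{eqn:rauch1} and~\eqref{eqn:rauch2}. I would first set up the two synthesis cascades explicitly: the exact one, which reconstructs $\boldsymbol{c}^{(\ell)} = \mathcal{T}_{\boldsymbol{\alpha}}\boldsymbol{c}^{(\ell-1)}\oplus\boldsymbol{d}^{(\ell)}$ via~\eqref{manifold_synthesis_iterations}, and the modified one starting from the same coarse level $\boldsymbol{\zeta}^{(0)}=\boldsymbol{c}^{(0)}$ and iterating $\boldsymbol{\zeta}^{(\ell)}=\mathcal{T}_{\boldsymbol{\alpha}}\boldsymbol{\zeta}^{(\ell-1)}\oplus\boldsymbol{q}^{(\ell)}$, where $\boldsymbol{q}^{(\ell)}$ is the detail sequence with its even entries zeroed. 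The target quantity is $\mu(\boldsymbol{c}^{(J)},\boldsymbol{\zeta}^{(J)})$, and the idea is to bound $e_\ell := \mu(\boldsymbol{c}^{(\ell)},\boldsymbol{\zeta}^{(\ell)})$ recursively in terms of $e_{\ell-1}$ and $\|\boldsymbol{d}^{(\ell)}\downarrow2\|_\infty$.

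The heart of the argument is the single-level estimate. Fixing an index $j$, set $a_j=(\mathcal{T}_{\boldsymbol{\alpha}}\boldsymbol{c}^{(\ell-1)})_j$ and $b_j=(\mathcal{T}_{\boldsymbol{\alpha}}\boldsymbol{\zeta}^{(\ell-1)})_j$, so that $c^{(\ell)}_j=a_j\oplus d^{(\ell)}_j$ and $\zeta^{(\ell)}_j=b_j\oplus q^{(\ell)}_j$, where the stored detail is understood at the current base point through parallel transport. Inserting the auxiliary point $b_j\oplus\pg_{b_j}(d^{(\ell)}_j)$ and using the triangle inequality for $\rho$ splits $\rho(c^{(\ell)}_j,\zeta^{(\ell)}_j)$ into two pieces. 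To the first piece, $\rho\big(a_j\oplus d^{(\ell)}_j,\,b_j\oplus\pg_{b_j}(d^{(\ell)}_j)\big)$, I apply the second Rauch theorem~\eqref{eqn:rauch2} with $p_1=a_j$, $p_2=b_j$, $v_1=d^{(\ell)}_j$, obtaining the bound $\rho(a_j,b_j)$. To the second piece, $\rho\big(b_j\oplus\pg_{b_j}(d^{(\ell)}_j),\,b_j\oplus q^{(\ell)}_j\big)$, both tangent vectors sit at $b_j$, so the first Rauch theorem~\eqref{eqn:rauch1} (the non-expansiveness of $\exp_{b_j}$) gives the bound $\|\pg_{b_j}(d^{(\ell)}_j)-q^{(\ell)}_j\|$. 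Here the even/odd dichotomy is decisive: on odd indices $q^{(\ell)}_j=\pg_{b_j}(d^{(\ell)}_j)$ so this term vanishes, while on even indices $q^{(\ell)}_j=0$ and the term equals $\|d^{(\ell)}_j\|$ because parallel transport is an isometry; in either case it is at most $\|\boldsymbol{d}^{(\ell)}\downarrow2\|_\infty$. Taking the supremum over $j$ yields
\begin{equation*}
    \mu(\boldsymbol{c}^{(\ell)},\boldsymbol{\zeta}^{(\ell)}) \leq \mu\big(\mathcal{T}_{\boldsymbol{\alpha}}\boldsymbol{c}^{(\ell-1)},\mathcal{T}_{\boldsymbol{\alpha}}\boldsymbol{\zeta}^{(\ell-1)}\big) + \|\boldsymbol{d}^{(\ell)}\downarrow2\|_\infty.
\end{equation*}

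Assuming $\mathcal{T}_{\boldsymbol{\alpha}}$ is stable in the sense of~\eqref{eqn:stability}, I replace the first term by $S_{\mathcal{T}}\,\mu(\boldsymbol{c}^{(\ell-1)},\boldsymbol{\zeta}^{(\ell-1)})$, turning the estimate into the linear recurrence $e_\ell\leq S_{\mathcal{T}}e_{\ell-1}+\|\boldsymbol{d}^{(\ell)}\downarrow2\|_\infty$ with $e_0=0$. Unrolling it gives $e_J\leq\sum_{\ell=1}^J S_{\mathcal{T}}^{\,J-\ell}\|\boldsymbol{d}^{(\ell)}\downarrow2\|_\infty$, and setting $C_{\mathcal{M}}=\max\{1,S_{\mathcal{T}}^{\,J-1}\}$ (so $C_{\mathcal{M}}=1$ when $S_{\mathcal{T}}\leq 1$) produces the claimed bound~\eqref{eqn:manifold_stability}. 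The main obstacle I anticipate is making the single-level estimate rigorous: one must ensure the detail vectors stay within the injectivity radius so that $\oplus$, $\ominus$ and the Rauch inequalities are legitimate, and one must pin down the convention that stored odd details are reinterpreted at the shifted base point $b_j$ by parallel transport, so that the odd contribution genuinely cancels. The non-negative curvature hypothesis enters only here, through~\eqref{eqn:rauch1}–\eqref{eqn:rauch2}; everything downstream is the elementary recurrence.
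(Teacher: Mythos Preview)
Your proposal is essentially the same argument as the paper's: a level-by-level recursion using the triangle inequality, the two Rauch comparison theorems~\eqref{eqn:rauch1}--\eqref{eqn:rauch2}, and the stability constant $S_{\mathcal{T}}$, leading to $e_J\leq\sum_{\ell=1}^J S_{\mathcal{T}}^{\,J-\ell}\|\boldsymbol{d}^{(\ell)}\downarrow2\|_\infty$. The only cosmetic difference is that the paper inserts the auxiliary point at the base $\mathcal{T}_{\boldsymbol{\alpha}}\boldsymbol{c}^{(\ell-1)}$ (parallel-transporting $\boldsymbol{q}^{(\ell)}$ there) rather than at $\mathcal{T}_{\boldsymbol{\alpha}}\boldsymbol{\zeta}^{(\ell-1)}$, and takes $C_{\mathcal{M}}=\max\{1,S_{\mathcal{T}}^{J}\}$ instead of your slightly sharper $\max\{1,S_{\mathcal{T}}^{J-1}\}$.
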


\begin{proof}
    The proof is a generalization to the proof of Theorem~\ref{thm:synthesis_theorem}. We consider the pyramid $\{\boldsymbol{c}^{(0)};\boldsymbol{q}^{(1)},\dots, \boldsymbol{q}^{(J)}\}$ where the details are given by
    \begin{equation*}
        q_{j}^{(\ell)} = \begin{cases}
            d^{(\ell)}_j, & j=2n+1, \\
            0, & j=2n,
        \end{cases} \quad \quad\ell=1,\dots, J, \quad j\in\mathbb{Z}.
    \end{equation*}
    We aim to bound the distance between $\boldsymbol{c}^{(J)}$ and the reconstructed sequence $\boldsymbol{\zeta}^{(J)}$ obtained by synthesizing the pyramid with the details $\boldsymbol{q}^{(\ell)}$, $\ell=1,\dots, J$. To this end, let us denote by $\widehat{\boldsymbol{q}}^{(\ell)}=\text{PG}_{\mathcal{T}_{\boldsymbol{\alpha}}\boldsymbol{c}^{(\ell-1)}}(\boldsymbol{q}^{(\ell)})$, and observe that
    \begin{align*}
        \mu\big(\boldsymbol{c}^{(\ell)}, \boldsymbol{\zeta}^{(\ell)}\big) & = \mu\big(\mathcal{T}_{\boldsymbol{\alpha}}\boldsymbol{c}^{(\ell-1)}\oplus\boldsymbol{d}^{(\ell)}, \mathcal{T}_{\boldsymbol{\alpha}}\boldsymbol{\zeta}^{(\ell-1)}\oplus\boldsymbol{q}^{(\ell)}\big) \\
        & \leq \mu\big(\mathcal{T}_{\boldsymbol{\alpha}}\boldsymbol{c}^{(\ell-1)}\oplus\boldsymbol{d}^{(\ell)}, \mathcal{T}_{\boldsymbol{\alpha}}\boldsymbol{c}^{(\ell-1)}\oplus\widehat{\boldsymbol{q}}^{(\ell)}\big) + \mu\big( \mathcal{T}_{\boldsymbol{\alpha}}\boldsymbol{c}^{(\ell-1)}\oplus\widehat{\boldsymbol{q}}^{(\ell)}, \mathcal{T}_{\boldsymbol{\alpha}}\boldsymbol{\zeta}^{(\ell-1)}\oplus\boldsymbol{q}^{(\ell)}\big) \\
        & \leq \|\boldsymbol{d}^{(\ell)}-\widehat{\boldsymbol{q}}^{(\ell)}\|_\infty + \mu\big( \mathcal{T}_{\boldsymbol{\alpha}}\boldsymbol{c}^{(\ell-1)}, \mathcal{T}_{\boldsymbol{\alpha}}\boldsymbol{\zeta}^{(\ell-1)}\big) \\
        & \leq \|\boldsymbol{d}^{(\ell)}\downarrow 2\|_\infty + S_{\mathcal{T}} \mu\big(\boldsymbol{c}^{(\ell-1)}, \boldsymbol{\zeta}^{(\ell-1)}\big).
    \end{align*}
    The first inequality is due to the triangular inequality of $\mu$, while the second inequality is due to~\eqref{eqn:rauch1} and~\eqref{eqn:rauch2}. Lastly, the third inequality is due to the stability of $\mathcal{T}_{\boldsymbol{\alpha}}$, see~\eqref{eqn:stability}, and the fact that parallel transport preserves the lengths of the transported vectors along their corresponding geodesics.
    Now, repeating the above inequalities starting from $\ell=J$, and bearing in mind that $\boldsymbol{c}^{(0)}=\boldsymbol{\zeta}^{(0)}$ gives
    \begin{equation*}
        \mu\big(\boldsymbol{c}^{(J)}, \boldsymbol{\zeta}^{(J)}\big)\leq \sum_{\ell=1}^J S_{\mathcal{T}}^{J-\ell}\|\boldsymbol{d}^{(\ell)}\downarrow 2\|_\infty.
    \end{equation*}
    The required result in then obtained by taking $C_{\mathcal{M}}=S_{\mathcal{T}}^J$ if $S_{\mathcal{T}}>1$ or otherwise $C_{\mathcal{M}}=1$.
\end{proof}


\section{Applications and numerical examples}\label{sec:numerical_examples}

Next, we illustrate different applications of our multiscale transforms. All results are reproducible via a package of Python code available online at \href{https://github.com/WaelMattar/Pseudo-reversing.git}{https://github.com/WaelMattar/Pseudo-reversing.git}. We start with numerical illustrations of pseudo-reversing subdivision schemes as refinement operators, in the linear setting, as Figure~\eqref{diag:pseudo_reversing_operators} shows.

\subsection{Pseudo-reversing subdivision schemes}
Let $\mathcal{S}_{\boldsymbol{\alpha}}$ be the subdivision scheme~\eqref{linear_subdivision_scheme} given with the mask
\begin{equation}~\label{LS_scheme_n_2_d_1}
    \boldsymbol{\alpha} = \bigg[\displaystyle \frac{1}{4},\; \frac{1}{3},\; \frac{1}{4},\; \frac{1}{3},\; \frac{1}{4},\; \frac{1}{3},\; \frac{1}{4}\bigg] \quad \text{supported on} \quad [-3,-2,-1,0,1,2,3].
\end{equation}
This subdivision is a member of a broader family of least-squares schemes~\cite{dyn2015univariate}, and its corresponding symbol $(\boldsymbol{\alpha}\downarrow 2)(z)$ of~\eqref{Z_transform} is given by
\begin{equation}~\label{even_symbol_of_n_2_d_1}
    (\boldsymbol{\alpha}\downarrow 2)(z) = \frac{1}{3z} + \frac{1}{3} + \frac{z}{3}.
\end{equation}
The polynomial $z(\boldsymbol{\alpha}\downarrow 2)(z)$ appears in Example~\ref{example_2} and it possesses two zeros on the unit circle; $-1/2 \pm i \sqrt{3}/2$. Figure~\ref{fig:pseudo-reversing} demonstrates the effect of pseudo-reversing the symbol $\boldsymbol{\alpha}$ with different parameters of $\xi$, highlighting the resulting tradeoff; when $\xi$ values are large, we obtain better-decaying decimation coefficients but also larger deviations from the original sequence. On the contrary, when we perturb only slightly with small $\xi$ values, the decimation, comprised of the reversed sequence, grows significantly, making its practical use less feasible.

\begin{figure}[!htbp]
    \begin{subfigure}[b]{0.40\textwidth}  
        \includegraphics[width=\textwidth]{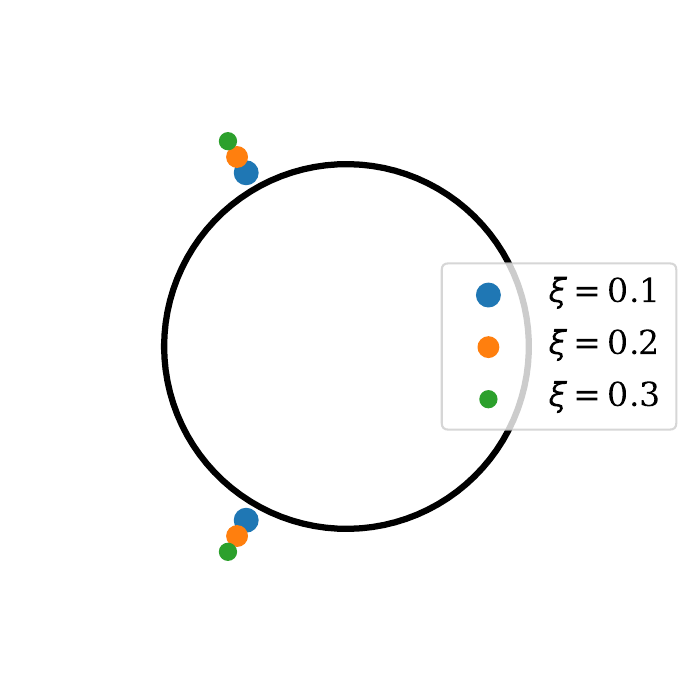}
        \caption{Displacements of zeros relative to $\mathbb{T}$.}
        \label{fig:root_displacement}
    \end{subfigure}
    \hfill
    \begin{subfigure}[b]{0.48\textwidth}   
        \includegraphics[width=\textwidth]{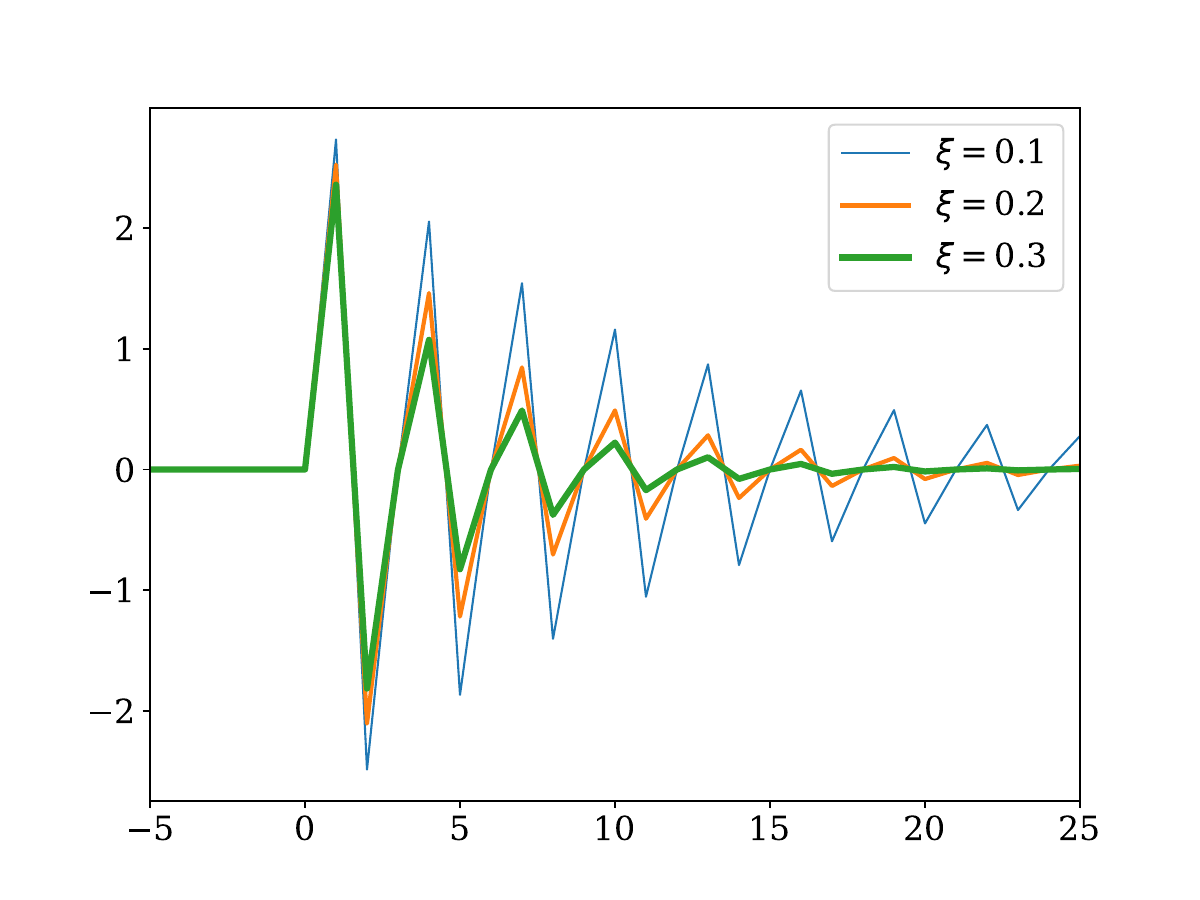}
        \caption{Decimation coefficients.}   
        \label{fig:decimation_coefficients}
    \end{subfigure}
    \vskip\baselineskip
    \begin{subfigure}[b]{0.48\textwidth}
        \includegraphics[width=\textwidth]{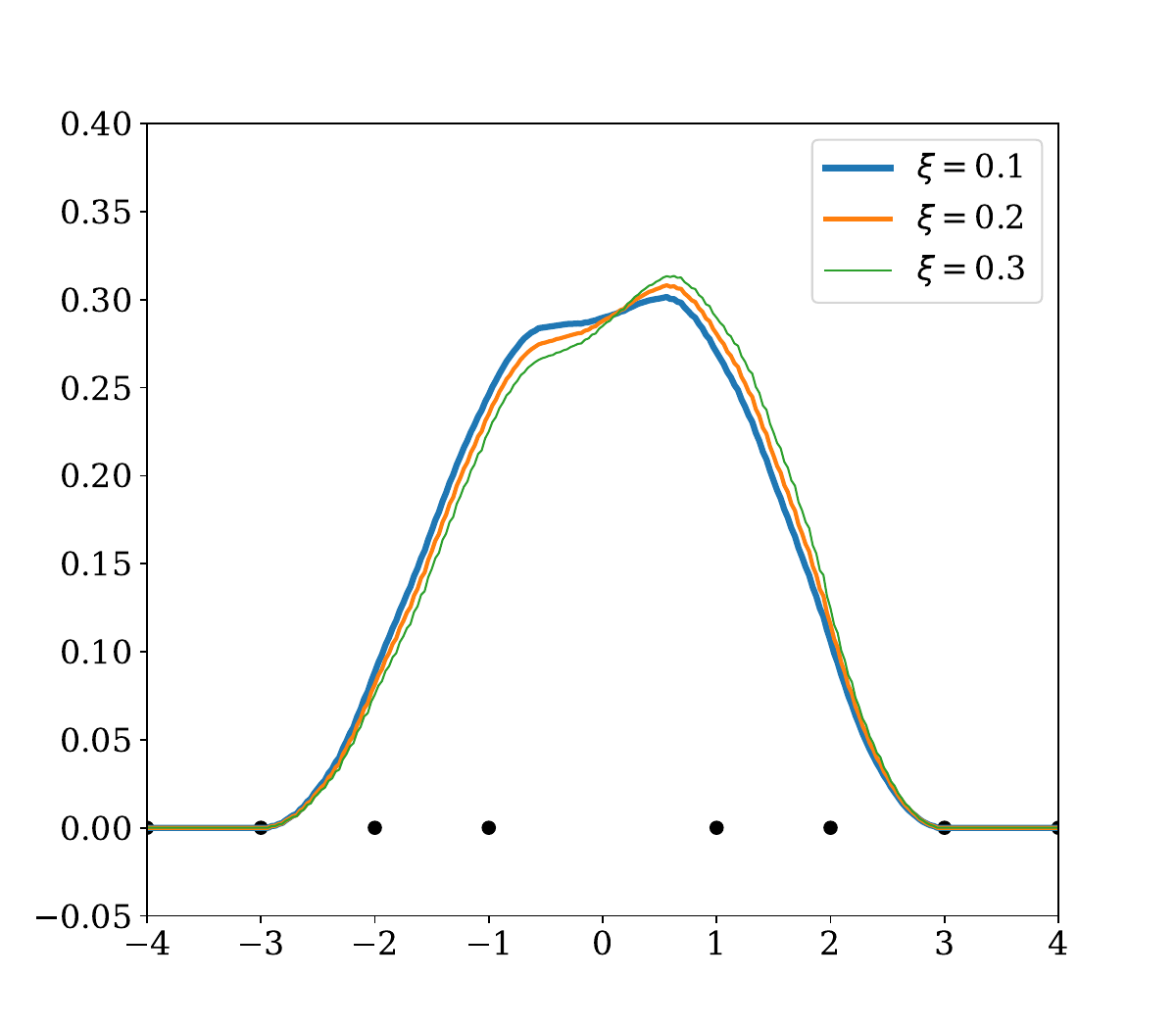}
        \caption{Basic limit functions of $\mathcal{S}_{\widetilde{\boldsymbol{\alpha}}}$.}
        \label{fig:limit_functions}
    \end{subfigure}
    \hfill
    \begin{subfigure}[b]{0.48\textwidth}
        \includegraphics[width=\textwidth]{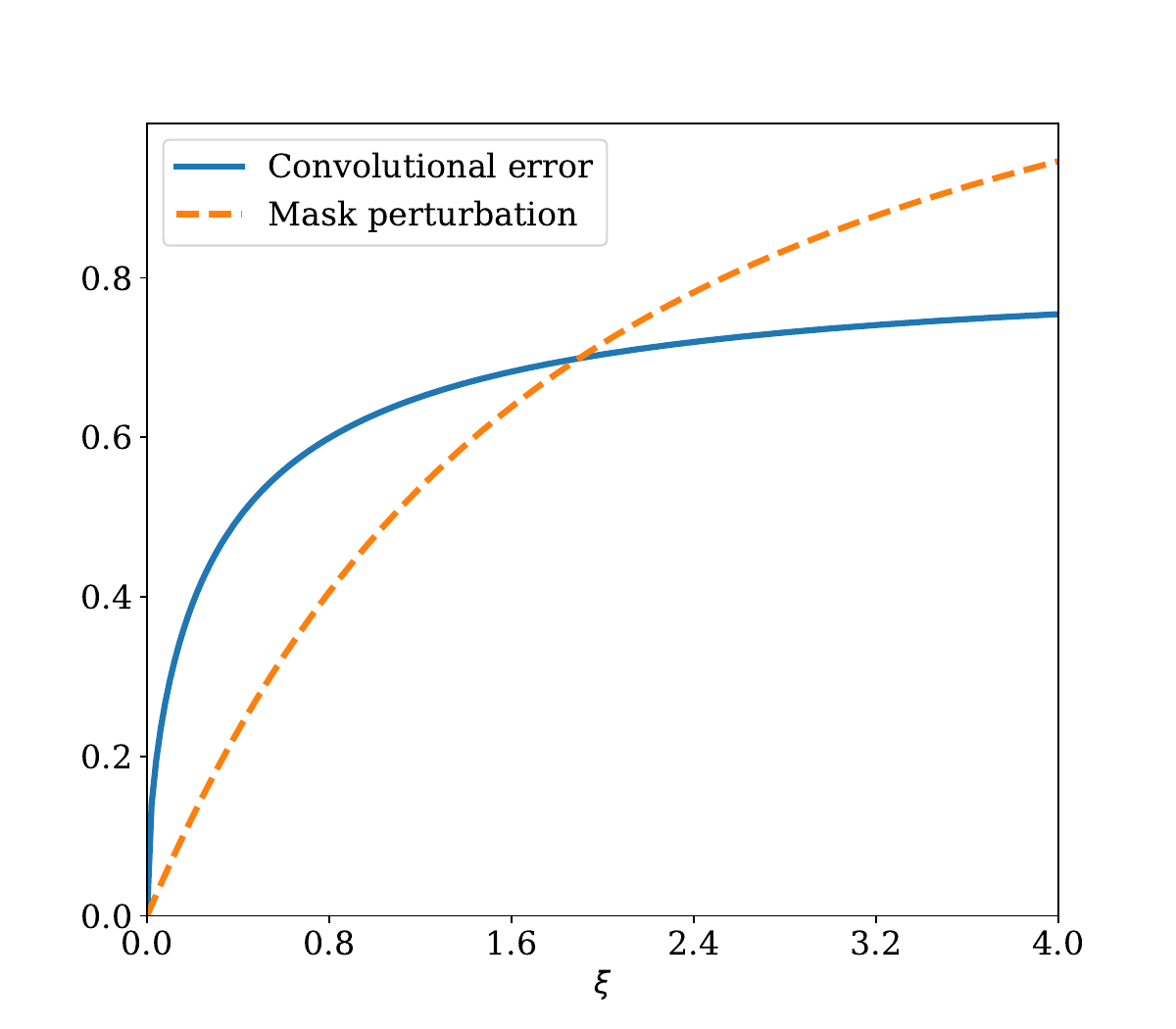}
        \caption{Convolutional error.}
        \label{fig:convolutional_error}
    \end{subfigure}
    \caption{Pseudo-reversing the subdivision scheme~\eqref{LS_scheme_n_2_d_1}. Figure~\eqref{fig:root_displacement} illustrates pushing the complex-valued zeros of~\eqref{even_symbol_of_n_2_d_1} with a parameter $\xi$. The pseudo-reverse coefficients of $\mathcal{D}_{\boldsymbol{\gamma}}$ are illustrated in Figure~\eqref{fig:decimation_coefficients}. The basic limit functions, that are $\mathcal{S}_{\widetilde{\boldsymbol{\alpha}}}^\infty \boldsymbol{\delta}$ where $\boldsymbol{\delta}$ is the Kronecker delta sequence, are depicted in Figure~\eqref{fig:limit_functions}. These functions are vital for studying the subdivision scheme $\mathcal{S}_{\widetilde{\boldsymbol{\alpha}}}$ as they reveal many of its properties. Figure~\eqref{fig:convolutional_error} illustrates the convolutional error of~\eqref{convolutional_equation}, $\|\boldsymbol{\delta} - \boldsymbol{\gamma}*(\boldsymbol{\alpha}\downarrow 2)\|_2$ as well as the norm of the mask perturbation $\|\boldsymbol{\alpha} - \widetilde{\boldsymbol{\alpha}}\|_1$.}
    \label{fig:pseudo-reversing}
\end{figure}

To shed more light on pseudo-reversing, Table~\ref{table:LS_kappa} shows the reversibility condition number $\kappa$ in~\eqref{eqn:condition_number} of $\widetilde{\boldsymbol{\alpha}}$ for different values of $\xi$, and for the same basic refinement. This table clearly shows the inverse correlation between $\xi$ and $\kappa$ as the theory suggests in Corollary~\ref{coro:kappa_of_pseudo}.

\begin{table}[H]
\begin{center}
    \begin{tabular}{| c || c | c | c | c | c | c | c | c | c | c | c | c | c | c | c |} 
     \hline
     $\xi$ & 0 & 0.1 & 0.2 & 0.3 & 0.4 & 0.5 & 0.6 & 0.7 & 0.8 & 0.9 & 1 & 1.1 & 1.2 \\
     \hline
     $\|\boldsymbol{\alpha}-\widetilde{\boldsymbol{\alpha}}\|_1$ & 0 & 0.06 & 0.12 & 0.18 & 0.23 &  0.28 & 0.32 & 0.36 &  0.40 & 0.44 & 0.47 & 0.50 & 0.53 \\
     \hline
     $\kappa(\widetilde{\boldsymbol{\alpha}})$ & $\infty$ & 18.19 & 9.54 & 6.67 & 5.24 & 4.38 & 3.81 & 3.41 & 3.11 & 2.88 & 2.69 & 2.54 & 2.41 \\
     \hline
    \end{tabular}
\end{center}
    \caption{The mask perturbation and the reversibility condition number of the approximating mask $\widetilde{\boldsymbol{\alpha}}$ against different parameters $\xi$.}
    \label{table:LS_kappa}
\end{table}

As mentioned in Remark~\ref{remark:Bspline}, the notion of pseudo-reversing can be relaxed and applied to refinements with bad reversibility condition numbers. That is, roughly speaking, schemes with zeros close to the unit circle. In other words, pseudo-reversing allows us to enforce a better, more practical reversibility. Technically, we can do this by pushing the zeros of $(\boldsymbol{\alpha}\downarrow 2)(z)$, which have moduli \emph{greater} than 1, with the factor $\xi$ as similar to~\eqref{eqn:pseudo_reverse}.

Figure~\ref{fig:Bspline} illustrates the zeros of $(\boldsymbol{\alpha}\downarrow 2)(z)$ in~\eqref{Z_transform} and the coefficients of the solution $\boldsymbol{\gamma}$, where $\boldsymbol{\alpha}$ is the mask of a high-order B-spline subdivision scheme. In addition, Table~\ref{table:Bspline_kappa} shows the reversibility condition number $\kappa$ of the reversible schemes, while Table~\ref{table:Bspline_pseudo_reversing_kappa} illustrates how pseudo-reversing imposes a better condition number on the B-spline subdivision scheme of order 6.

\begin{figure}[H]
    \begin{subfigure}[c]{0.48\textwidth}
        \includegraphics[width=\textwidth]{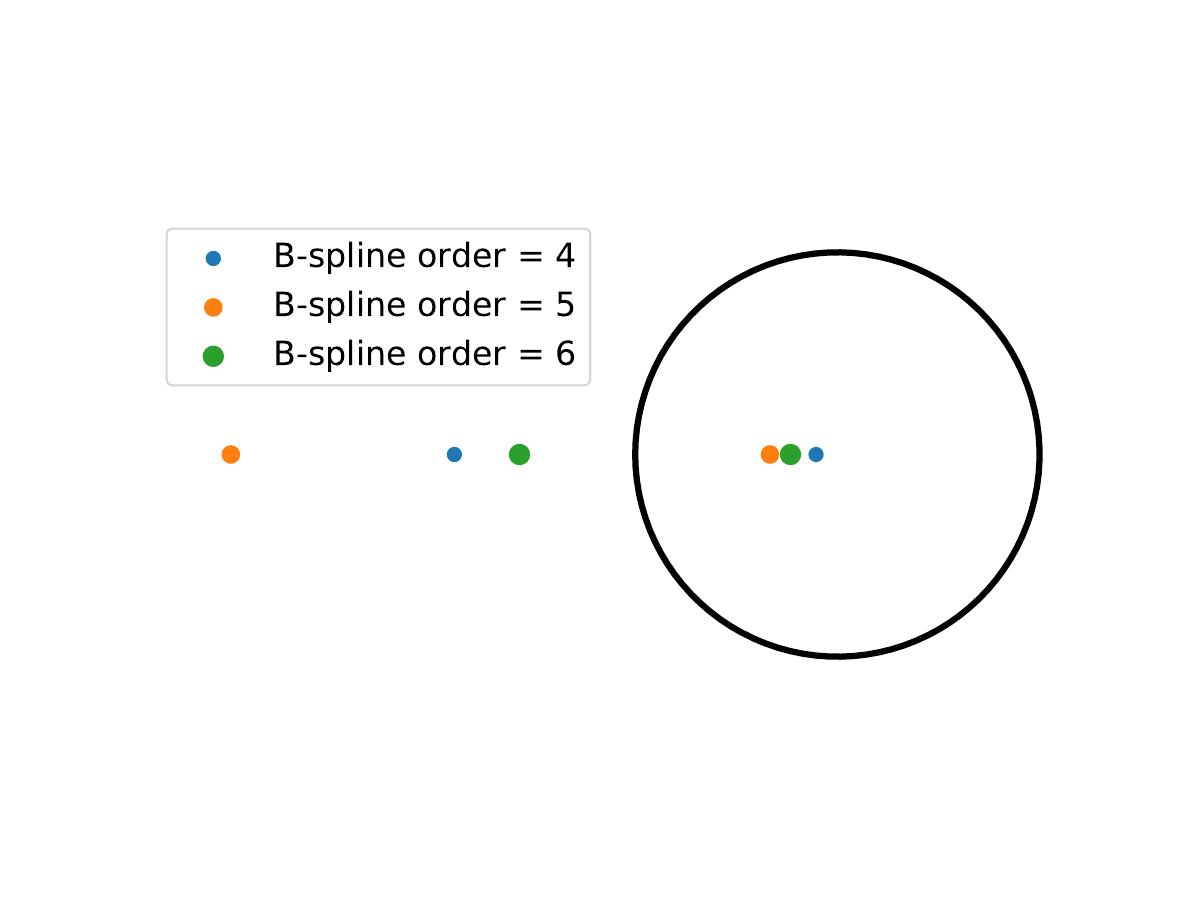}
        \caption{Zeros of high-order B-spline schemes.}
        \label{fig:Bspline_roots}
    \end{subfigure}
    \hfill
    \begin{subfigure}[c]{0.48\textwidth}  
        \includegraphics[width=\textwidth]{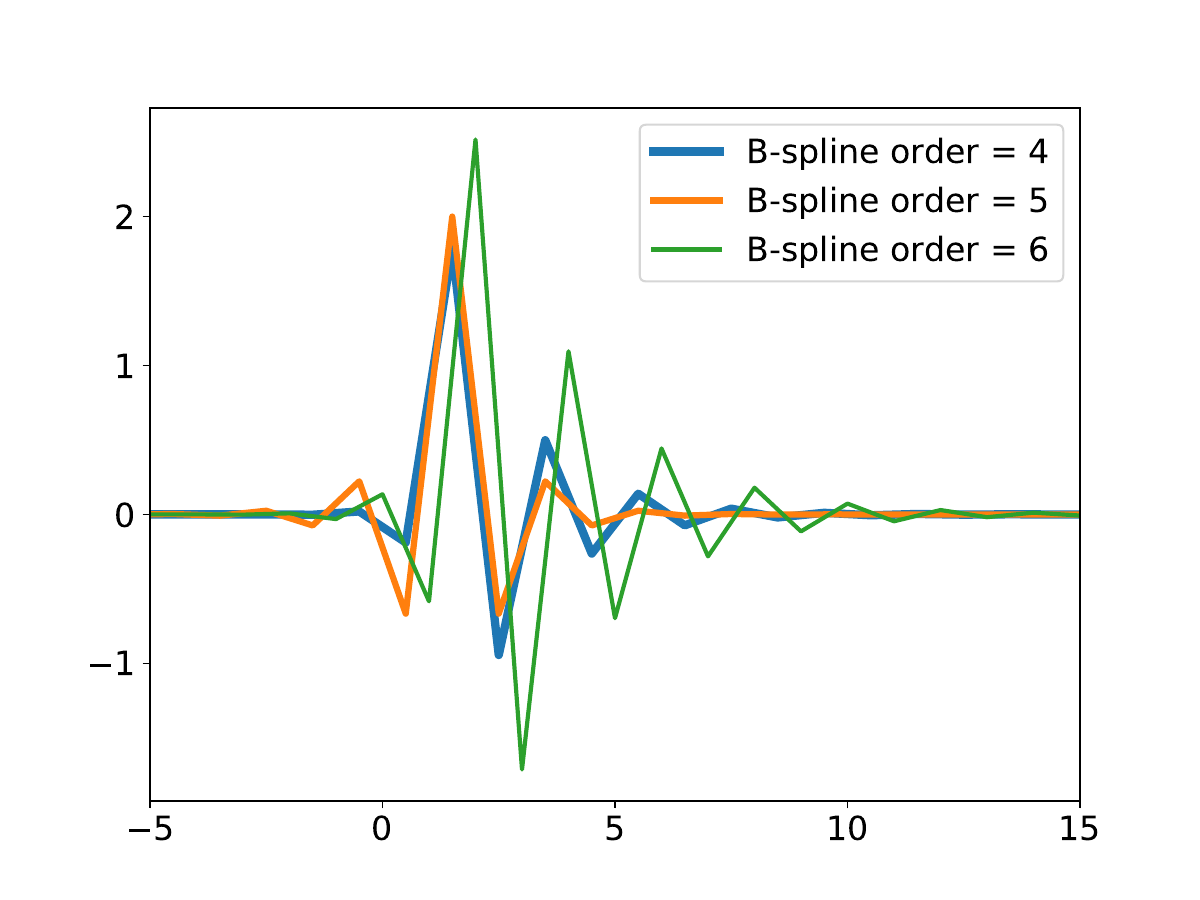}
        \caption{Decimation coefficients.}
        \label{fig:Bspline_decimation}
    \end{subfigure}
    \caption{Zeros and decimation coefficients of high-order B-spline subdivision schemes. On the left, part of the zeros of $(\boldsymbol{\alpha}\downarrow2)(z)$ in~\eqref{Z_transform} corresponding to the B-spline schemes of different orders, relative to $\mathbb{T}$. On the right, a depiction of the corresponding decimation coefficients. The reversibility condition number of these schemes appears in Table~\ref{table:Bspline_kappa}.}
    \label{fig:Bspline}
\end{figure}

\begin{table}[H]
\begin{center}
    \begin{tabular}{| c || c | c | c | c | c | c | c | c |} 
     \hline
     B-spline order & 2 & 3 & 4 & 5 & 6 & 7 \\
     \hline
     $\kappa(\boldsymbol{\alpha})$ & 2 & 2 & 4 & 4 & 8 & 8 \\
     \hline
    \end{tabular}
\end{center}
    \caption{The reversibility condition number~\eqref{eqn:condition_number} of the B-spline subdivision schemes. Note how the reversibility weakens as the order of the B-spline increases. Interestingly, the pattern seems to follow the rule $\kappa(\boldsymbol{\alpha}\downarrow2)=2^{\lfloor n/2\rfloor}$ where $n$ is the B-spline order and $\left\lfloor\cdot\right\rfloor$ is the floor function. An explicit formula for $(\boldsymbol{\alpha}\downarrow 2)(z)$ of order $n$ is given by $2^{-n}\sum_{j=0}^{\lfloor(n+1)/2\rfloor}\binom{n+1}{2j} z^j$.}
    \label{table:Bspline_kappa}
\end{table}

\begin{table}[H]
\begin{center}
    \begin{tabular}{| c || c | c | c | c | c | c | c | c | c | c | c | c | c | c | c |} 
     \hline
     $\xi$ & 0 & 0.1 & 0.2 & 0.3 & 0.4 & 0.5 & 0.6 & 0.7 & 0.8 & 0.9 & 1 & 1.1 & 1.2 \\
     \hline
     $\kappa(\widetilde{\boldsymbol{\alpha}})$ & 8 & 6.59 & 5.69 & 5.06 & 4.60 & 4.25 & 3.97 & 3.74 & 3.55 & 3.39 & 3.26 & 3.14 & 3.04 \\
     \hline
    \end{tabular}
\end{center}
    \caption{Improving the reversibility condition number~\eqref{eqn:condition_number} of the B-spline subdivision scheme of order 6 with different parameters $\xi$. Only the zeros of $(\boldsymbol{\alpha}\downarrow 2)(z)$ with moduli greater than 1 were displaced with the parameter $\xi$. Note how the reversibility improves as $\xi$ increases.}
    \label{table:Bspline_pseudo_reversing_kappa}
\end{table}

\subsection{Multiscaling in the scalar-valued setting}~\label{sec:numerical_linear_multiscaling}

Here we numerically illustrate the linear transform~\eqref{linear_multiscale_analysis} and test the synthesis result appearing in Theorem~\ref{thm:synthesis_theorem}. For this sake, we let $\boldsymbol{c}^{(6)}$ be our analyzed sequence, sampled from a test function $f(x):[0,10]\mapsto\mathbb{R}$ over the equispaced bounded grid $x\in2^{-6}\mathbb{Z}\cap[0, 10]$. We take $f$ to be the real part of the standard Morlet wavelet, centered at $x=5$.

We implement the pyramid transform~\eqref{linear_multiscale_analysis} based on the subdivision scheme~\eqref{LS_scheme_n_2_d_1} and its pseudo-reverse $\mathcal{D}_{\boldsymbol{\gamma}}$ for $\xi=1.4$. We remind the reader that $\boldsymbol{\gamma}$ is infinitely supported, see Proposition~\ref{prop:decay_of_reverse}. However, in practice, we first truncate the support with two endpoints that cover the indices where its values are significant, making its support finite. Then, we normalize it in the sense of dividing its elements by its sum. In fact, this method is guaranteed to produce controllable errors, depending on the size of the truncation~\cite{mattar2023pyramid}.

We decompose the sequence $\boldsymbol{c}^{(6)}$ by iterating~\eqref{linear_multiscale_analysis} $m$ times, for $m=1,2,\dots,6$, to obtain the pyramid $\{ \boldsymbol{c}^{(6-m)}; \boldsymbol{d}^{(5-m)}, \dots, \boldsymbol{d}^{(6)}\}$. All detail coefficients on the even indices are then set to zero, yielding a sparser pyramid. Then, we reconstruct using the inverse pyramid transform. Theorem~\ref{thm:synthesis_theorem} suggests that the reconstruction error, measured by the infinity norm between the analyzed $\boldsymbol{c}^{(6)}$ and the reconstructed $\boldsymbol{\zeta}^{(6)}$ grows as $m$ increases. The reason being that the more we decompose into coarse scales, the loss of information resulting from setting the even detail coefficients to zero becomes more significant, and hence the upper bound~\eqref{synthesis_bound} increases, allowing more space for the error $\|\boldsymbol{c}^{(6)}-\boldsymbol{\zeta}^{(6)}\|_\infty$.

The upper row of Figure~\ref{fig:linear_example} demonstrates the analyzed sequence $\boldsymbol{c}^{(6)}$ next to its $4$ layers ($m=4$) of detail coefficients. Note how the property of having small detail coefficients on the even indices is more pronounced on fine scales, and is violated on coarse scales. This phenomenon is explained via Theorem~\ref{thm:details_difference}.

To further illustrate the theoretical results, we contaminate the sequence $\boldsymbol{c}^{(6)}$ with an additive noise sampled from a normal Gaussian distribution with zero mean and standard deviation of $1\%$. The lower row of Figure~\ref{fig:linear_example} shows the noisy sequence next to two layers of detail coefficients $(m=4)$. The reconstruction error grows with the number of decompositions, as affirmed by Theorem~\ref{thm:synthesis_theorem}. Moreover, notice that there is no clear structured pattern in the detail coefficients and that its even details are significant, in contrast to Figure~\ref{fig:linear_pyramid}. This is explained by Theorem~\ref{thm:details_difference}.

Table~\ref{table:synthesis_error} shows the synthesis error with respect to the decomposition layer $m$ where $m$ runs from $1$ to $6$, both for the smooth and the noisy curves appearing in Figure~\ref{fig:linear_example}.

We finally remark here that similar results are obtained for different values of $\xi>0$, but we picked the value $\xi=1.4$ because it yielded a good reversibility condition number $\kappa$ that was suitable for the truncation size of the decimation operator $\mathcal{D}_{\boldsymbol{\gamma}}$. In general, one should pick values of $\xi$ such that the resulting even detail coefficients are as small as possible.

\begin{figure}[H]
    \begin{subfigure}[b]{0.49\textwidth}
        \includegraphics[width=\textwidth]{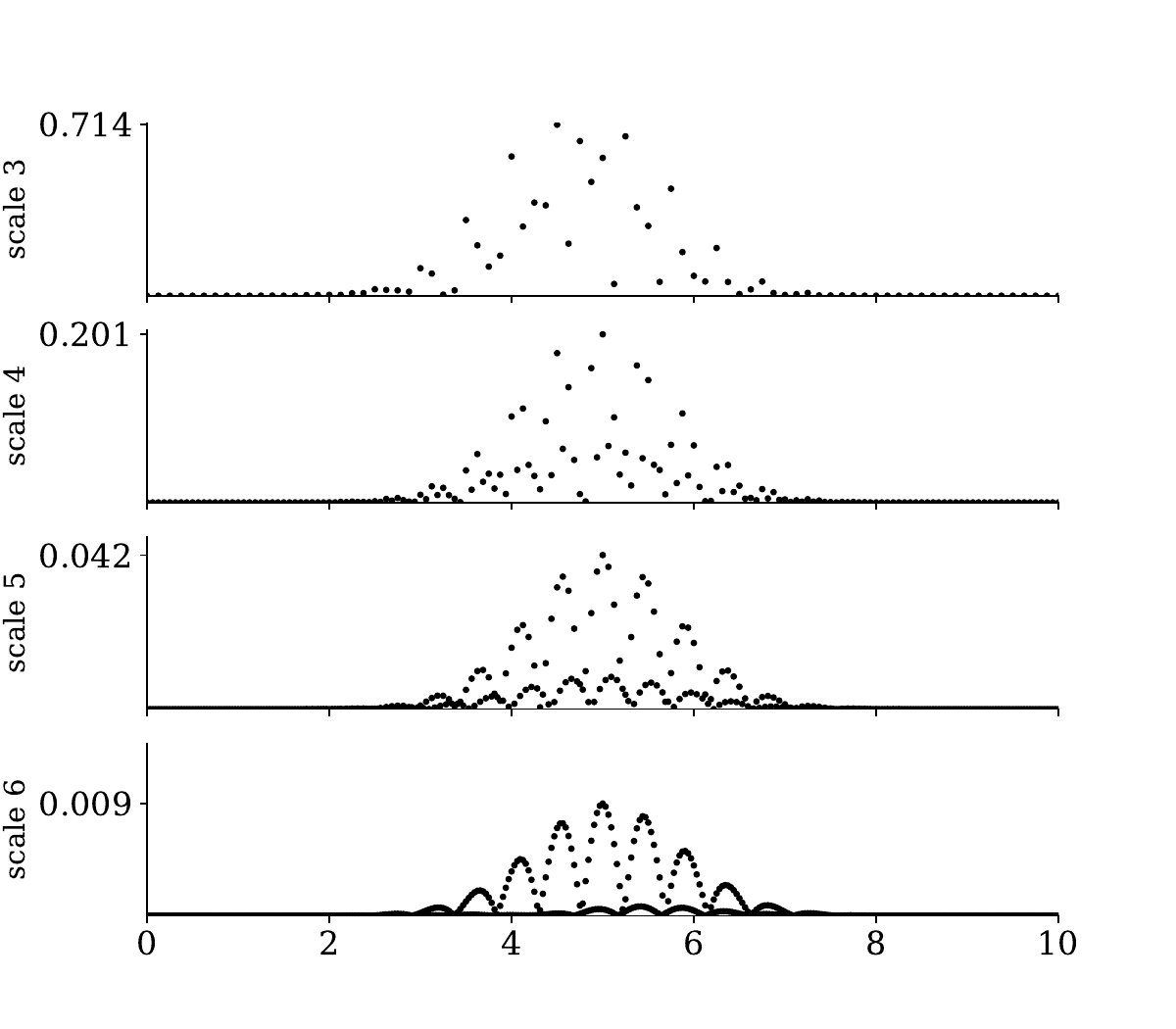}
        \caption{}
        \label{fig:linear_pyramid}
    \end{subfigure}
    \hfill
    \begin{subfigure}[b]{0.49\textwidth}   
        \includegraphics[width=\textwidth]{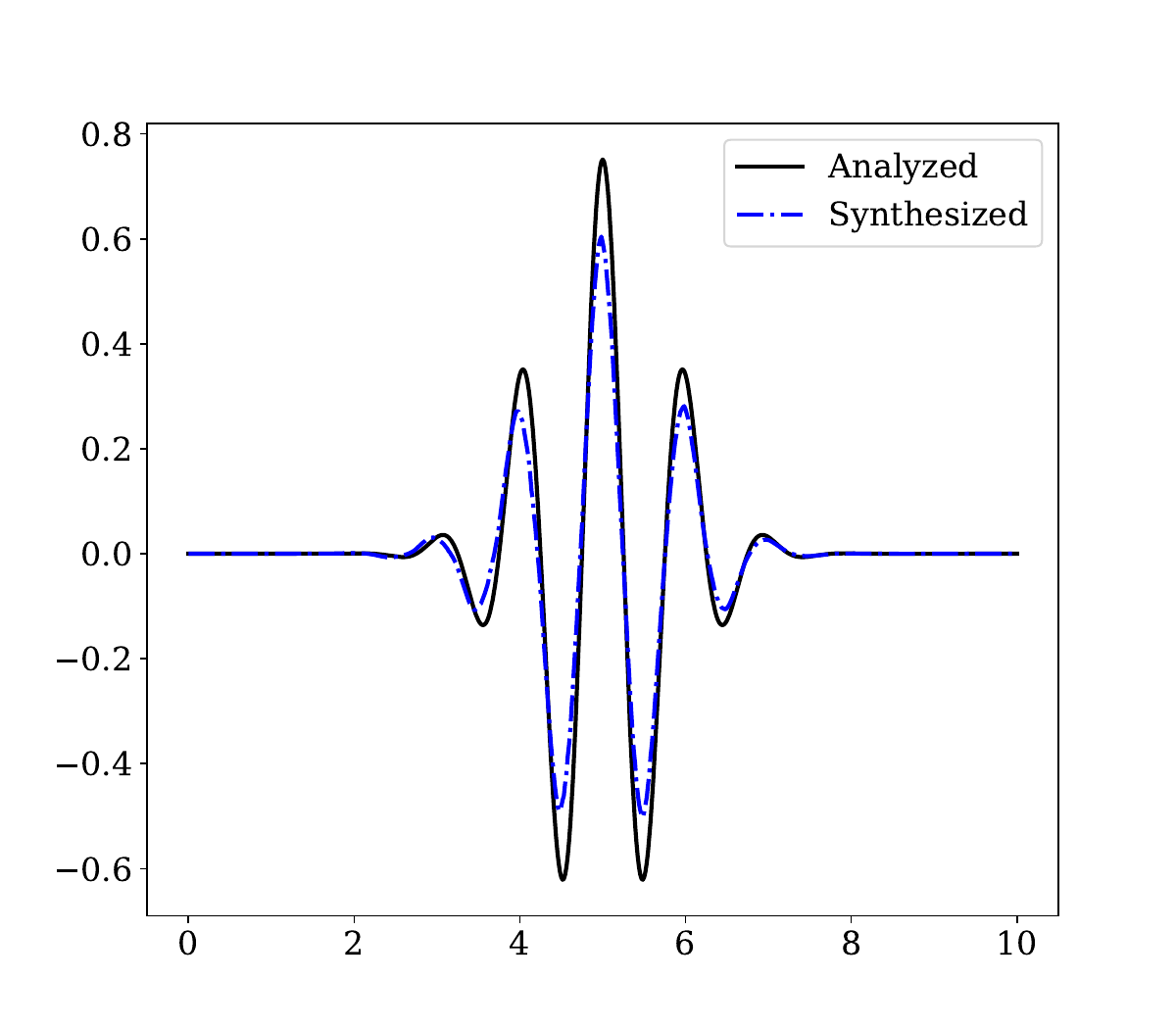}
        \caption{}
        \label{fig:linear_reconstruction}
    \end{subfigure}
    \vskip\baselineskip
    \begin{subfigure}[b]{0.49\textwidth}
        \includegraphics[width=\textwidth]{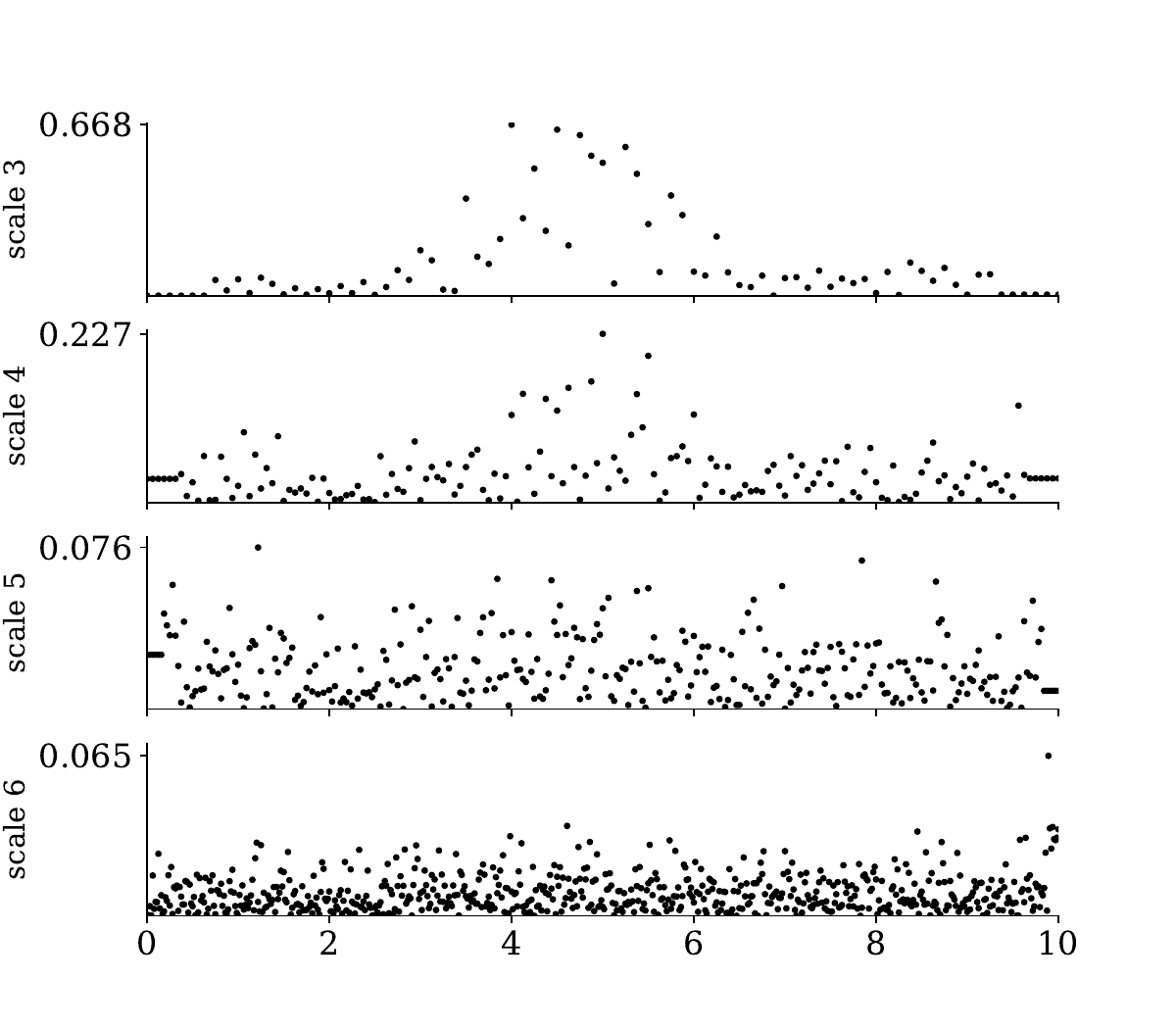}
        \caption{}
        \label{fig:noisy_linear_pyramid}
    \end{subfigure}
    \hfill
    \begin{subfigure}[b]{0.49\textwidth}   
        \includegraphics[width=\textwidth]{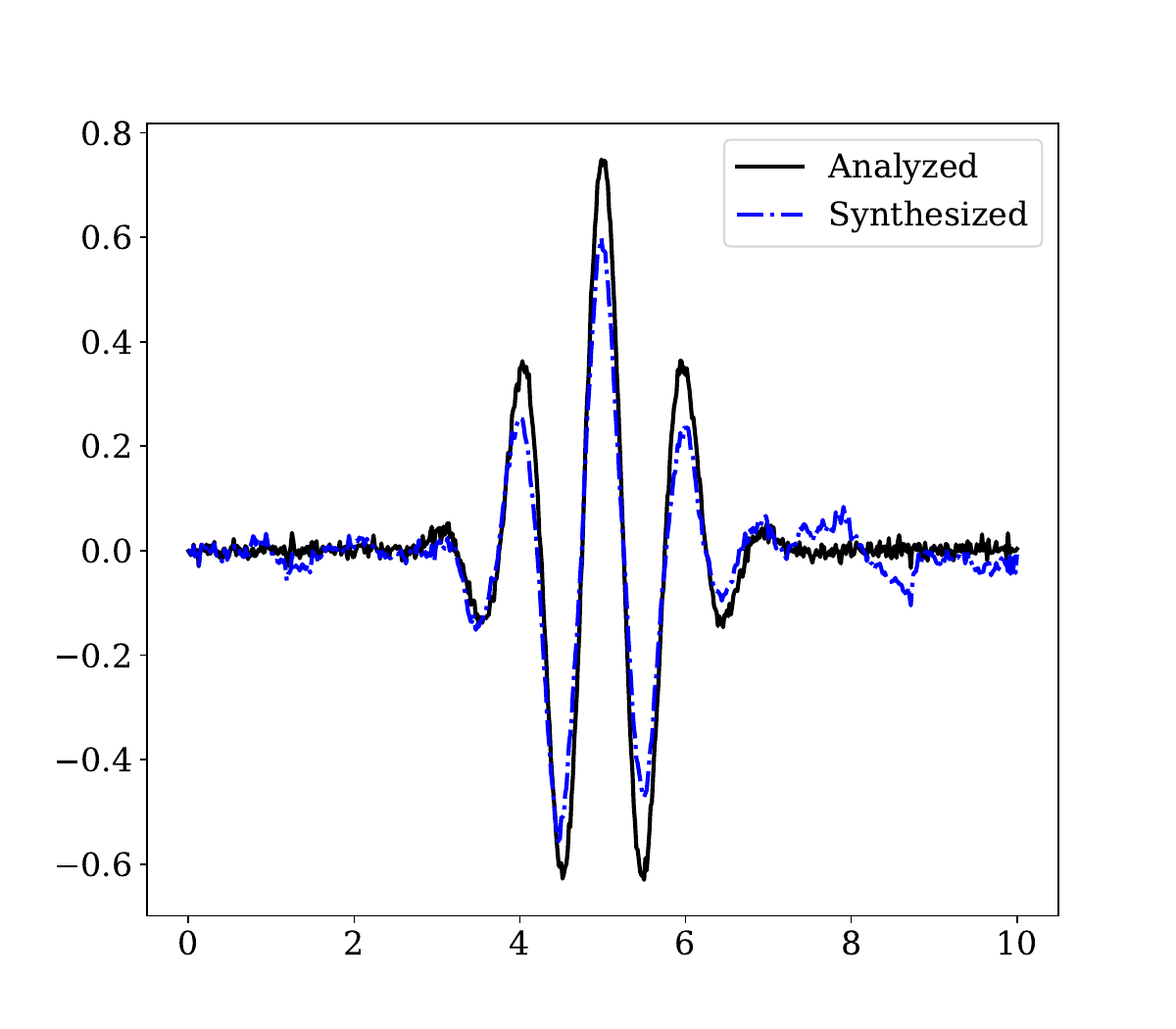}
        \caption{}
        \label{fig:noisy_linear_reconstruction}
    \end{subfigure}
    \caption{Linear multiscaling with a non-reversible subdivision scheme. On the right, the analyzed function in black, and the synthesized dot-dashed function in blue. On the left, the detail coefficients in absolute value of the first $4$ iterations of~\eqref{linear_multiscale_analysis} for $\mathcal{S}_{\boldsymbol{\alpha}}$ of~\eqref{LS_scheme_n_2_d_1} and its pseudo-reverse $\mathcal{D}_{\boldsymbol{\gamma}}$ for $\xi=1.4$.}
    \label{fig:linear_example}
\end{figure}

\begin{table}[H]
\begin{center}
    \begin{tabular}{| c || c | c | c | c | c | c |} 
     \hline
     $m$ & 1 & 2 & 3 & 4 & 5 & 6 \\
     \hline
     $\|\boldsymbol{c}^{(6)}-\boldsymbol{\zeta}^{(6)}\|_\infty$ for the smooth $\boldsymbol{c}^{(6)}$ & 0.0007 & 0.0059 & 0.0371 & 0.1896 & 0.3860 & 0.8816 \\ 
     \hline
     $\|\boldsymbol{c}^{(6)}-\boldsymbol{\zeta}^{(6)}\|_\infty$ for $\boldsymbol{c}^{(6)}$ with noise & 0.047 & 0.0492 & 0.0729 & 0.2045 & 0.4715 & 0.8756 \\ 
     \hline
    \end{tabular}
\end{center}
    \caption{The reconstruction error for the sequences of Figure~\ref{fig:linear_example} against the number of detail layers. Note that at $m=5,6$ the error becomes very high compared with the analyzed data. This drawback of our multiscaling~\eqref{linear_multiscale_analysis} is explained in Theorem~\ref{thm:synthesis_theorem} and its subsequent paragraphs.}
    \label{table:synthesis_error}
\end{table}

\subsection{Pyramid for SO(3) and manifold-valued contrast enhancement}

We begin with an illustration of the multiscale transform~\eqref{manifold_even_singular_multiscale} over the manifold of rotation matrices. That is, the rotation group $\text{SO}(3)$ acting on the Euclidean space $\mathbb{R}^3$. Then, we show the application of contrast enhancement using the multiscale representation.

Let $I_3$ be the identity matrix in $\mathbb{R}^{3\times 3}$, and denote by
\begin{equation}~\label{SO3_group}
    \text{SO}(3) = \left\{ R\in\mathbb{R}^{3\times 3} \;\big|\; R^TR= I_3, \; \text{det}(R) = 1\right\}
\end{equation}
the special orthogonal group consisting of all rotation matrices in $\mathbb{R}^{3}$. $\text{SO}(3)$ is endowed with a Riemannian manifold structure by considering it as a Riemannian submanifold of the embedding Euclidean space $\mathbb{R}^{3\times 3}$, with the inner product $\langle R_1, R_2\rangle = \text{trace}(R_1^TR_2)$ for $R_1,R_2\in\text{SO}(3)$.

The Riemannian geodesic connecting two matrices $R_1$ and $R_2$ in $\text{SO}(3)$ is parametrized by
\begin{equation}~\label{eqn:SO3_geodesic}
    \Gamma(s) = R_1(R_1^TR_2)^s, \quad s\in[0,1].
\end{equation}
Moreover, the Riemannian geodesic distance is explicitly computed by
\begin{equation}~\label{eqn:SO3_distance}
    \rho_{\text{SO(3)}}(R_1, R_2)=\|\text{Log}(R_1^TR_2)\|_{\text{F}},
\end{equation}
where $\|\cdot\|_\text{F}$ denotes the Frobenius norm, and $\text{Log}$ is the principal logarithm, see~\cite{moakher2002means} for more details. Further information can be found in~\cite{berger2012differential, curtis1979lie}.

One simple way to generate smooth and random $\text{SO}(3)$-valued sequences to test our multiscaling~\eqref{manifold_even_singular_multiscale} is to sample few rotation matrices, to associate the samples with indices, and then to refine using any refinement rule promising smooth limits, see e.g.,~\cite{wallner2011convergence}. Indeed, we followed this method to synthetically generate such a sequence. Specifically, we randomly generated $4$ rotation matrices, enriched the samples to $11$ matrices by a simple upsampling rule, and then refined the result using the cubic B-spline analogue~\eqref{manifold_subdivision} for a few iterations. The resulting sequence is then parametrized over the dyadic grid $2^{-6}\mathbb{Z}$ corresponding to scale 6. In the refinement process, the Riemannian center of masses were approximated by the method of geodesic inductive averaging presented in~\cite{dyn2017manifold}.

To visualize the generated $\text{SO}(3)$-valued sequence, we rotate the standard orthonormal basis of $\mathbb{R}^3$ using each rotation matrix, and then depict all results at different locations depending on the parametrization as a time series. Figure~\ref{fig:SO3_curve} illustrates the result.

We now analyze the synthetic $\text{SO}(3)$-valued curve appearing in Figure~\ref{fig:SO3_curve} via the multiscale transform~\eqref{manifold_even_singular_multiscale}, using the nonlinear analogue $\mathcal{T}_{\boldsymbol{\alpha}}$ of the subdivision scheme~\eqref{LS_scheme_n_2_d_1}, with its pseudo-reverse $\mathcal{Y}_{\boldsymbol{\gamma}}$ for $\xi=0.64$. The Riemannian center of masses~\eqref{manifold_subdivision} and~\eqref{manifold_decimation} were approximated by the method of geodesic inductive averaging. Figure~\ref{fig:SO3_detail_coefficients} exhibits the Frobenius norms of the first $4$ layers of detail coefficients. Note how the maximal norm of each layer can be bounded with geometrically decreasing values with respect to the scale. This indicates the smoothness of the curve. Moreover, the norms associated with the even indices are lower than the rest; this is a direct effect of $\boldsymbol{\gamma}$ truncation. Both phenomena were thoroughly explicated in~\cite{mattar2023pyramid}. Over and above, note how this effect is more pronounced on the high scales, which is explained by Lemma~\ref{lem:Manifold_details}.

We remark here that the detail coefficients generated by the multiscale transform~\eqref{manifold_even_singular_multiscale} lie in the tangent bundle $T\text{SO}(3)$ where the tangent space $T_R\text{SO}(3)$ of a rotation matrix $R$ is the set of all matrices $S\in\mathbb{R}^{3\times 3}$ such that $R^TS$ is skew-symmetric~\cite{edelman1998geometry}.

\begin{figure}[H]
\centering
\begin{subfigure}[b]{.49\textwidth}
  \centering
  \includegraphics[width=1\textwidth]{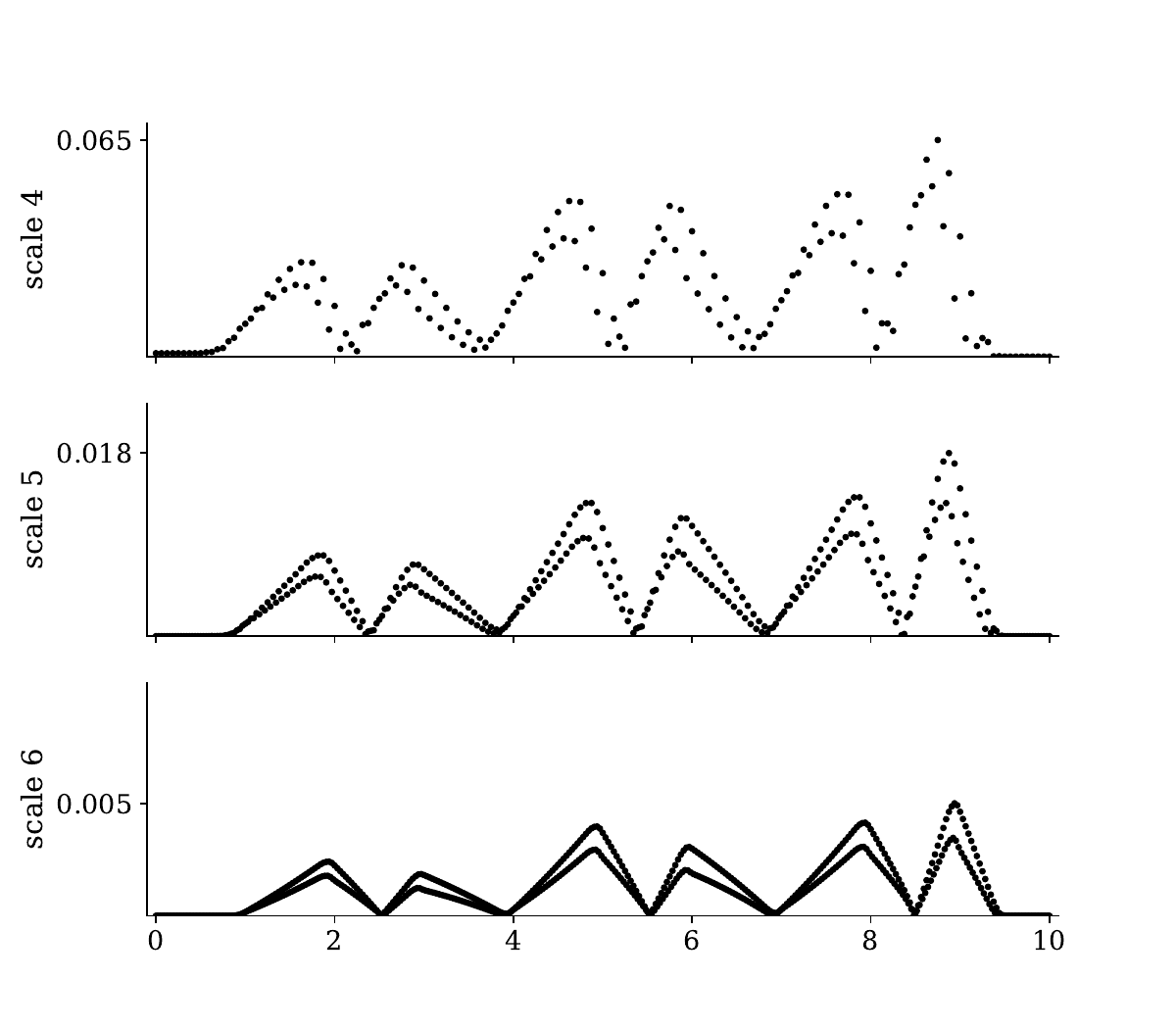}
  \caption{ }
  \label{fig:SO3_detail_coefficients}
\end{subfigure}%
\begin{subfigure}[b]{.49\textwidth}
  \centering
  \includegraphics[width=1\textwidth, trim=2.5cm 3.5cm 2.5cm 3cm, clip]{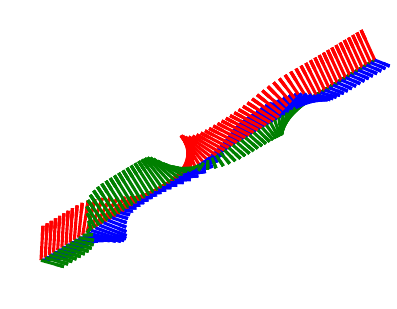}
  \caption{ }
  \label{fig:SO3_curve}
\end{subfigure}
\caption{Multiscaling of $\text{SO}(3)$-valued sequence with a non-reversible subdivision scheme. On the right, a visualization of a sequence of rotation matrices. On the left, the Frobenius norms of the last 3 layers of detail coefficients.}
\label{fig:SO3_pyramid}
\end{figure}

We now illustrate the application of contrast enhancement to the sequence in Figure~\ref{fig:SO3_curve} using its representation in Figure~\ref{fig:SO3_detail_coefficients}. In the context of multiscaling, the idea behind contrast enhancement lies in manipulating the detail coefficients while keeping the coarse approximation unchanged. Particularly, in order to add more contrast to a sequence, ``swerves'' in the case of rotations, one has to emphasize its most significant detail coefficients. Since all tangent spaces are closed under scalar multiplication, this can be done by multiplying the largest detail coefficients by a factor greater than $1$, while carefully monitoring the results to be within the injectivity radii of the manifold. The enhanced sequence is then synthesized from the modified pyramid.

Figure~\ref{fig:SO3_enhancement} shows the final result of enhancing the rotation sequence of Figure~\ref{fig:SO3_curve}, side by side, where the largest 20\% of the detail coefficients of each layer in Figure~\ref{fig:SO3_detail_coefficients} were scaled up by 40\%. Indeed, note how regions with small rotation changes are kept unchanged, while regions with high changes are more highlighted after the application. The specific percentages 20\% and 40\% were chosen to provide good and convincing visual results.

\begin{figure}[H]
\begin{subfigure}[b]{.49\textwidth}
  \centering
  \includegraphics[width=1\textwidth, trim=2.5cm 3.5cm 2.5cm 3cm, clip]{SO3_curve.pdf}
  \caption{ }
\end{subfigure}%
\begin{subfigure}[b]{.49\textwidth}
  \centering
  \includegraphics[width=1\textwidth, trim=2.5cm 3.5cm 2.5cm 3cm, clip]{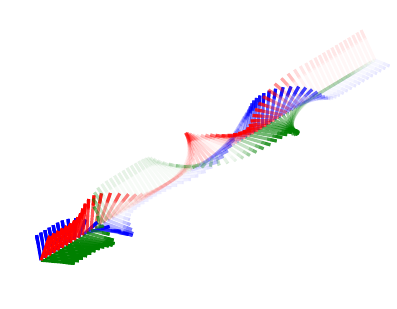}
  \caption{ }
\end{subfigure}
    \caption{Contrast enhancement of SO(3)-valued sequence. On the left, the original sequence of rotation matrices. On the right, the enhanced rotation sequence. The largest 20\% of the detail coefficients of each layer were enlarged by 40\%. To highlight the effect of the application, the color opacity of the arrows corresponds to the respective Riemannian distance between the original and the enhanced sequence. Regions where the application had less effect appear more transparent.}
    \label{fig:SO3_enhancement}
\end{figure}

\subsection{Data compression of rigid body motions}

Here we consider the special Euclidean Lie group $\text{SE}(3)$ of dimension 6, which is a semidirect product of the special orthogonal group $\text{SO}(3)$ in~\eqref{SO3_group} with the Euclidean space $\mathbb{R}^3$. Each element in this group makes a configuration of orientation and position to a rigid body. A convenient matrix representation of this group is
\begin{equation*}
\text{SE}(3)=\text{SO}(3)\ltimes\mathbb{R}^3 = \left\{\begin{pmatrix}
        R & p \\
        0 & 1
    \end{pmatrix}
\bigg| \; R\in\text{SO}(3), \; p\in\mathbb{R}^3
\right\}.
\end{equation*}
The space $\text{SE}(3)$ exhibits a Riemannian manifold structure when equipped with a left-invariant Riemannian metric~\cite{doCarmo1992, kirillov2008introduction}. In particular, we choose the left-invariant Riemannian metric defined via $\langle V_1,V_2\rangle_A=\text{trace}((A^{-1}V_1)^T(A^{-1}V_2))$ for any $A\in\text{SE}(3)$ and $V_1,V_2\in T_A\text{SE}(3)$. Moreover, the tangent space at the identity of the group, that is the identity matrix $I_4$, is given by the following set
\begin{equation*}
    T_{I_4}\text{SE}(3) = \bigg\{\begin{pmatrix}
        \Omega & v \\
        0 & 0
    \end{pmatrix}\;\bigg|\;\Omega\in\mathbb{R}^{3\times 3},\;\Omega^T = -\Omega,\;v\in\mathbb{R}^3\bigg\}.
\end{equation*}

The Riemannian geodesic connecting two elements $A_1,A_2\in\text{SE}(3)$ that are comprised of two rotation matrices $R_1,R_2\in\text{SO}(3)$ and points $p_1, p_2\in\mathbb{R}^3$, respectively, is given by
\begin{equation}
    \Gamma(s)=\begin{pmatrix}
        R_1(R_1^TR_2)^s & p_1+s(p_2-p_1) \\
        0 & 1
    \end{pmatrix}, \quad s\in[0,1].
\end{equation}
Furthermore, the Riemannian geodesic distance is provided by the formula
\begin{equation}~\label{eqn:SE3_distance}
    \rho_{\text{SE(3)}}(A_1, A_2) = \sqrt{\rho_{\text{SO(3)}}^2(R_1, R_2)+\|p_2-p_1\|^2_2},
\end{equation}
where $\rho_{\text{SO(3)}}$ is the Riemannian distance over the manifold $\text{SO}(3)$ as appears in~\eqref{eqn:SO3_distance}. For an overview of these results, see~\cite{duan2013riemannian, vzefran1996choice}.

In this subsection, we take the $\text{SO}(3)$-valued curve appearing in Figure~\ref{fig:SO3_curve} and wrap it around a cone in $\mathbb{R}^3$, obtaining a time-series of location coordinates and orientations, that is, a curve on $\text{SE}(3)$ parametrized over the equispaced grid $2^{-6}\mathbb{Z}$. Figure~\ref{fig:SE3_curve} depicts the curve and is often called a rigid body motion. We aim to show that there is a slight visual difference, as well as small empirical errors, between the original curve and the synthesized one.

The value of this experiment is to show that it is possible to use our pyramid representation for the task of data compression. Similar to compression techniques using wavelets~\cite{ning2010wavelet}, in this application, one usually sets to zero a prefixed percentage of the smallest detail coefficients. Or equivalently, all coefficients with norms below a certain threshold. A sparser representation is then obtained, which can be synthesized into the compressed result. The validity of the experiment lies in the ability to omit a significant percentage of the information in the multiscale representation while maintaining visual resemblance to the original curve after synthesis.

Let us now present the technical aspects of the experiment. We analyze the $\text{SE}(3)$-valued curve with the multiscale transform~\eqref{manifold_even_singular_multiscale} using the nonlinear analogue $\mathcal{T}_{\boldsymbol{\alpha}}$~\eqref{manifold_subdivision} of the subdivision scheme~\eqref{LS_scheme_n_2_d_1} with its pseudo-reverse $\mathcal{Y}_{\boldsymbol{\gamma}}$ of~\eqref{manifold_decimation} for $\xi=0.64$. Specifically, we decompose the sequence four times to obtain the pyramid $\{ \boldsymbol{c}^{(2)}; \boldsymbol{d}^{(3)}, \boldsymbol{d}^{(4)}, \boldsymbol{d}^{(5)},\boldsymbol{d}^{(6)}\}$. The Riemannian center of masses~\eqref{manifold_subdivision} and~\eqref{manifold_decimation} were approximated by the method of geodesic inductive averaging~\cite{dyn2017manifold} using $\rho_{\text{SE}(3)}$ of~\eqref{eqn:SE3_distance}. For compression, we set to zero $99\%$ of the detail coefficients, which is a staggering percentage! In practice, instead of storing $641$ ground-truth SE(3)-valued matrices, we can compress and store $41$ SE(3)-valued matrices in addition to only $12$ detail coefficients!

Figure~\ref{fig:SE3_compression} depicts the original sequence in conjunction with the synthesized and compressed result. Note that if one chooses to store more detail coefficients or decompose into fewer layers, then the compression result will yield smaller errors, as explained by Theorem~\ref{ManifoldStabilityTHM}. In the depicted example, we measure the error between the original sequence $\boldsymbol{c}^{(6)}$ and its estimation $\boldsymbol{\zeta}^{(6)}$ using an element-wise geodesic error, defined as 
\begin{equation}~\label{eqn:geodesic_error}
    \rho_{\text{SE(3)}}(c^{(6)}_j,\zeta^{(6)}_j).
\end{equation}
Figure~\ref{fig:Riemannian_distance_histogram} demonstrates the distribution of these errors with a brief explanation of the result.

\begin{figure}[H]
\centering
\begin{subfigure}[c]{0.49\textwidth}
  \includegraphics[width=1\textwidth]{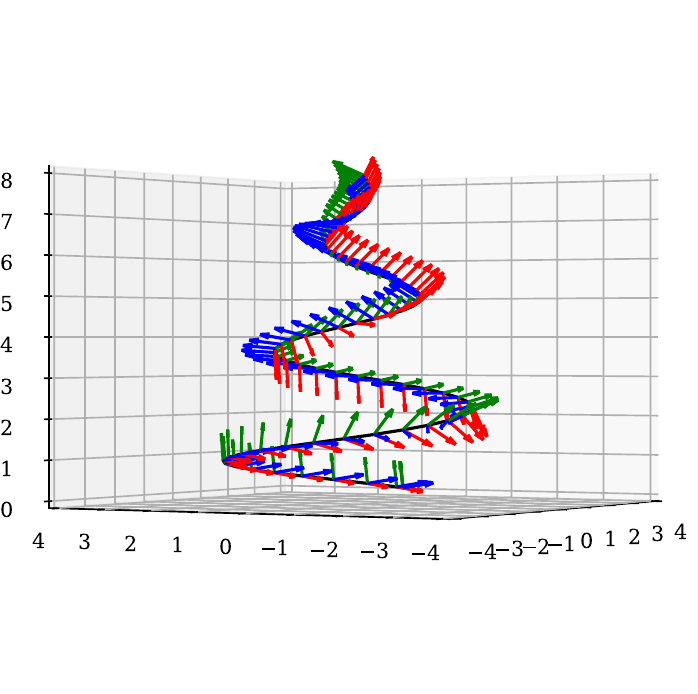}
  \caption{Original SE(3) curve.}
  \label{fig:SE3_curve}
\end{subfigure}\hfil
\begin{subfigure}[c]{0.49\textwidth}
  \includegraphics[width=1\textwidth]{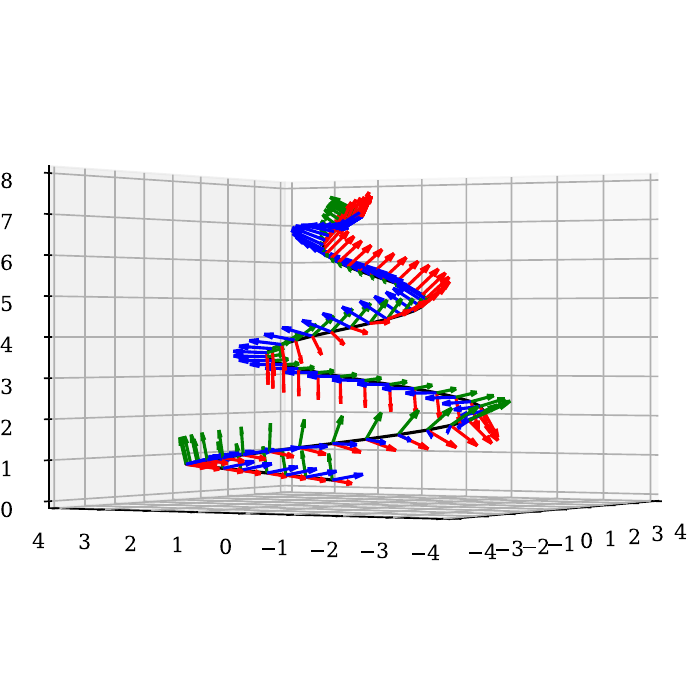}
  \caption{Compressed curve.}
\end{subfigure}
\caption{Compression of SE(3)-valued curve. On the left is a depiction of the original ground-truth curve. On the right is the result of synthesis using the coarsest approximation of the curve $\boldsymbol{c}^{(0)}$ in addition to just $1\%$ of the detail coefficients. Quantitatively, the original curve stores $641$ SE(3) matrices while the compression holds only $53$ matrices, $41$ of which are SE(3)-valued. The difference between the two curves is visually noticeable mostly in the top and the bottom parts.}
\label{fig:SE3_compression}
\end{figure}

\begin{figure}[H]
    \centering
    \includegraphics[width=0.5\linewidth]{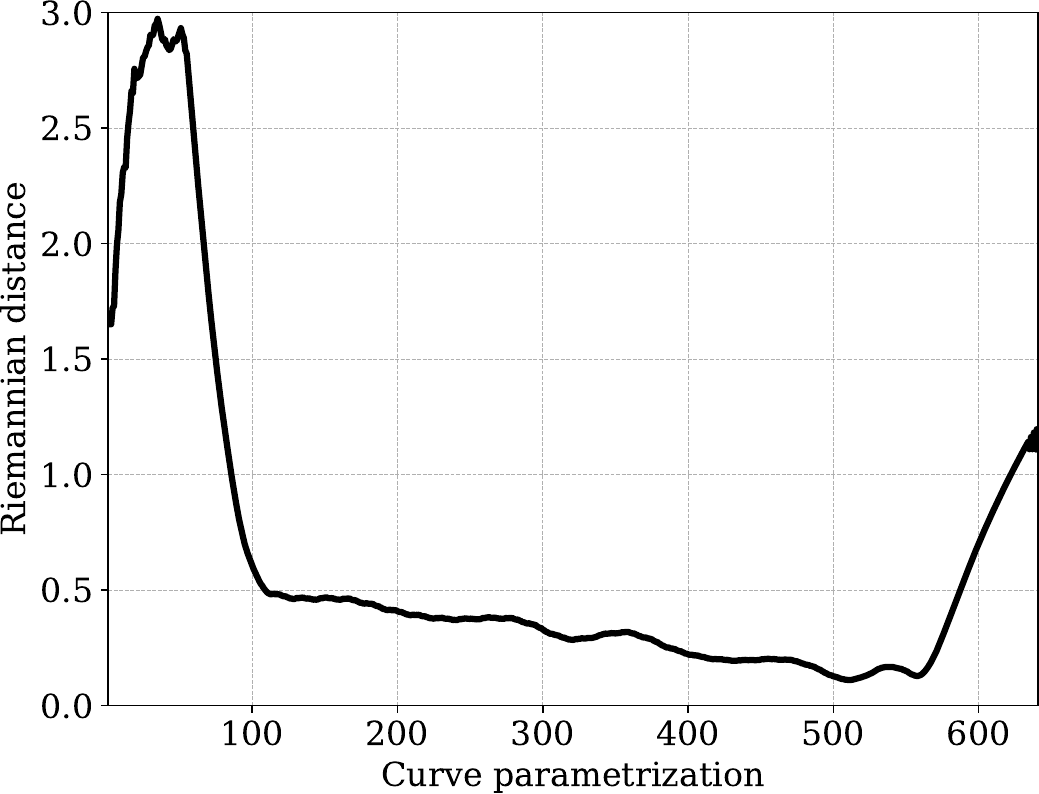}
    \caption{The pairwise Riemannian distances~\eqref{eqn:geodesic_error} between the original and the compressed curve. Because the compression error is visually apparent at the top and bottom parts of the curve, see Figure~\ref{fig:SE3_compression}, the error plot is high around the endpoints of the parametrization.}
    \label{fig:Riemannian_distance_histogram}
\end{figure}




\section*{Acknowledgments}
The authors would like to extend their sincere gratitude to the peer reviewers for their valuable time, insightful feedback and constructive suggestions. Their thoughtful comments and recommendations have greatly contributed to improving the quality of this work.

N. Sharon is partially supported by the NSF-BSF award 2019752 and the DFG award 514588180. W. Mattar is partially supported by the Nehemia Levtzion Scholarship for Outstanding Doctoral Students from the Periphery (2023).


\bibliographystyle{plain}
\bibliography{references}
\end{document}